\crefname{equation}{}{}
\tikzstyle{decision} = [rectangle, rounded corners, minimum width=0.7cm, minimum height=0.8cm,text centered, draw=black, fill=red!30]
\tikzstyle{standard} = [rectangle, rounded corners, minimum width=1cm, minimum height=0.8cm,text centered, draw=black, fill=red!10]
\tikzstyle{frozen} = [rectangle, rounded corners, minimum width=4cm, minimum height=0.8cm,text centered, draw=black, fill=red!10]
\let\c@table\c@figure
\numberwithin{equation}{section}
\numberwithin{figure}{section}
\numberwithin{table}{section}
\theoremstyle{plain}
\newtheorem{thm}{Theorem}[section]
\theoremstyle{definition}
\newtheorem{defn}[thm]{Definition}
\theoremstyle{definition}
\newtheorem{ass}[thm]{Assumption}
\theoremstyle{plain}
\newtheorem{prob}[thm]{Problem}
\theoremstyle{plain}
\newtheorem{prop}[thm]{Proposition}
\theoremstyle{plain}
\newtheorem{cor}[thm]{Corollary}
\theoremstyle{plain}
\newtheorem{lemma}[thm]{Lemma}
\theoremstyle{plain}
\newtheorem{rem}[thm]{Remark}
\theoremstyle{definition}
\Crefname{ass}{Assumption}{Assumptions}
\newcommand{\R}{\mathbb{R}}
\newcommand{\E}{\mathbb{E}}
\newcommand{\N}{\mathbb{N}}
\newcommand{\PP}{\mathbb{P}}
\newcommand{\pr}[1]{{#1}^{\prime}}
\DeclareMathOperator*{\argmax}{arg\,max}
\DeclarePairedDelimiter\floor{\lfloor}{\rfloor}
\title{Markov decision processes with observation costs: framework and computation with a penalty scheme}
\author{Christoph Reisinger\thanks{Mathematical Institute, University of Oxford, Oxford, OX2 6GG, UK \newline\hspace*{1.8em}(\texttt{christoph.reisinger@maths.ox.ac.uk}, \texttt{jonathan.tam@maths.ox.ac.uk})} \and Jonathan Tam\footnotemark[1]}
\date{December 5, 2023}
\begin{document}

\maketitle

\abstract{We consider Markov decision processes where the state of the chain is only given at chosen observation times and of a cost. Optimal strategies involve the optimisation of observation times as well as the subsequent action values. We consider the finite horizon and discounted infinite horizon problems, as well as an extension with parameter uncertainty. By including the time elapsed from observations as part of the augmented Markov system, the value function satisfies a system of quasi-variational inequalities (QVIs). Such a class of QVIs can be seen as an extension to the interconnected obstacle problem. We prove a comparison principle for this class of QVIs, which implies uniqueness of solutions to our proposed problem. Penalty methods are then utilised to obtain arbitrarily accurate solutions. Finally, we perform numerical experiments on three applications which illustrate our framework.}

\section{Introduction}

In this article, we examine the observation cost model (OCM) for Markov Decision Processes (MDPs). A cost must be paid in order to observe the state of the underlying MDP, and only then can adjustments be made to the action which influences the dynamics of the MDP. We propose a penalty scheme for efficient numerical computation for the resulting system of equations.

MDPs are mathematical tools that model the optimisation of a random process, in order to maximise the expected profit over time. Applications are common in maintenance, portfolio optimisation, sensor detection, reinforcement learning and more. Most setups implicitly assume a fixed source of information upon which the user relies to select an optimal action. However, such a steady stream of information might not be available in situations where resources are constrained, either by the expensive cost of measurements, or by the impracticality of frequent sampling. This calls for an extra layer of optimisation, where the user has to decide on the optimal observation times of the information source, as well as the optimal sequence of actions to maximise the expected returns.

The literature involving observation control appear across several different fields, and appear under terms such as `optimal inspections', `costly observations' or `controllable observations'. To the best of our knowledge, the earliest works appear in \cite{kushner1964optimum, meier1967optimal}, which concerns the linear quadratic Gaussian (LQG) problem over a finite horizon with fixed number of measurements, as well as the papers \cite{anderson-friedman-I, anderson-friedman-II}, which examines a costly optimal stopping problem in continuous time. Numerous applications have emerged in the literature over the years, which we list (non-exhaustively) below, broadly categorising into the following areas:
\begin{itemize}
    \item environmental management control models \cite{YOSHIOKA_1_impulse_control, Yoshioka_2_river_impulse, Yoshioka_3_monitoring_deviation, YOSHIOKA_4_biological_stopping},
    \item optimal sampling rates in communications  \cite{paging_registration, sampling_guo_2021},
    \item optimal sensing problems \cite{sensor_energy_harvesting, wu2008optimal,tzoumas2020lqg},
    \item medical treatment cycles \cite{winkelmann_markov_2014,winkelmann_phd},
    \item detection of drift in Brownian motion \cite{ekstrom2020disorder, bayraktar2015quickest, dalang2015quickest, dyrssen-ekstrom-2018},
    \item empirical works in reinforcement learning \cite{bellinger2020active, krueger2020active, bellinger2022}.
\end{itemize}

The standard approach is to formulate the problem in terms of a partially observable Markov decision process (POMDP). Solving the POMDP is then equivalent to solving a fully observable MDP on the belief state \cite{hernandez1989adaptive}. Dynamic programming for the value function leads to a search for the optimal observation time after the currently observed state, as well as the optimal action sequence between the observation times. In this article we will restrict ourselves to consider only \textit{constant} actions between observations. Such an assumption allows us to parametrise the belief state, which takes values on the space of probability measures, with a finite number of variables, so that the augmented state space becomes countable. This assumption in our framework can be relaxed to include a finitely parametrised set of time-inhomogeneous open-loop controls, as we demonstrate in \Cref{sub:inhomogeneous}. The general case, however, suffers from the curse of dimensionality: as the time between observations is unbounded, the number of actions to be optimised also grows unbounded. Indeed, non-constant controls between observations are mostly only treated under the LQG framework \cite{wu2008optimal, tzoumas2020lqg, cooper1971optimal}. For applications, the constant action assumption applies to models where actions cannot be feasibly changed without an accompanying observation, such as the medical treatment applications in \cite{winkelmann_markov_2014,winkelmann_phd} or the environmental management control models \cite{YOSHIOKA_1_impulse_control, Yoshioka_2_river_impulse, Yoshioka_3_monitoring_deviation, YOSHIOKA_4_biological_stopping}.

% Due to the nature of the action being fixed upon an observation until the next, the passage of time has a lingering effect on the optimal control. For example, it might be optimal to diagnose and repair certain machinery when performance is subpar, but it might be more favourable to directly purchase new equipment with new technology if said piece of machinery was left unfixed and unobserved for a prolonged period of time. 
For the characterisation of the belief MDP for the OCM, our construction includes the time elapsed (since the last observation) as variable within dynamic programming to obtain a low-dimensional Markovian structure. To our knowledge, only the works of \cite{anderson-friedman-I, anderson-friedman-II} and \cite{paging_registration} model the OCM in this specific formulation, but the problems considered were restricted to fixed dynamics for the underlying Markov chain. The inclusion of time elapsed as a variable for the augmented Markovian system leads to a system of discrete quasi-variational inequalities (QVI). We assume that the Markov chain takes values in a finite state space $\mathcal{X}$ and that its dynamics are known and are given by the transition matrices $\{P_a\}_{a \in A}$, where $A$ is a finite action set. We also assume that the actions can only be adjusted at the observation times. The one-step reward function is given by $r(x,a) = r_{a,x}$ and the observation cost is given by a constant $c_{\mathrm{obs}} >0$. The inclusion of time elapsed as a variable in the Markov system leads to a system of discrete quasi-variational inequalities (QVI), which for the discounted infinite horizon problem, reads:
\begin{align} \label{qvi_intro}
    \min \bigg\{ & v^{n}_{a,x} - \gamma v^{n+1}_{a,x} - \Big(P^{n}_a\ r_a\Big)_x\ , \nonumber \\
    & v^{n}_{a,x} - \bigg( P^{n}_a\ \overline{\gamma v^{1} + r}\bigg)_x  +  c_{\mathrm{obs}}   \bigg\} = 0, 
\end{align}
where $v$ is the value function, indexed by: $x \in \mathcal{X}$, the state of the chain at the previous observation; $n \in \N_{\geq 1}$, the time elapsed since the previous observation; and $a \in A$, the action applied at the previous observation. The vector $\left(\overline{\gamma v^{1} + r}\right)_x = \max_{a \in A} (\gamma v^1_{a,x} + r_{a,x})$ represents the `inner loop' optimisation over the space of actions after an observation is made. 

We note here that our formulation differs from  \cite{huang2021self}, which uses the term `self-triggered MDPs' to refer to the OCM with constant action between observations. There, the time elapsed variable is not considered as part of the augmented Markov system: in our framework we parametrise the belief state by the variables $(n,x,a)$ as in \eqref{qvi_intro}; in \cite{huang2021self} the parametrisation is only with $(x,a)$. We can interpret our formulation as a special case of unknown initial conditions in a filtering problem, where actions might not have been applied optimally in the past. Such situations can occur in important settings with external factors and
constrained resources — e.g., long NHS (National Health Service in the UK) waiting times
especially after the pandemic — which clearly lead to suboptimal times of scans (observations)
and treatment (actions). Suboptimal actions can also arise due to users' general inattentiveness and inertia to reacting quickly to information \cite{reis2006consumer, reis2006producer}.

More generally, we can write the QVI \eqref{qvi_intro} obtained from dynamic programming in the following abstract form.
\begin{prob}\label{prob:intro}
\sloppy Find a function $u: \N \times \mathcal{X} \to \R^{A}$, with $u_a(\cdot,\cdot)$ as its components, satisfying
\begin{align}\label{abstract_qvi_intro}
    \min \left\{ F_a(n,x,u(n,x),u_a),\ u_a(n,x) - \mathcal{M} u(n,x) \right\}=0, \quad  n\in\N,\ x \in \mathcal{X},\ a \in A,
\end{align}
where $F_a: \N \times \mathcal{X} \times \R^{A} \times \R^{\N\times \mathcal{X}} \to \R$ satisfies \Cref{ass_intro}, and ${\mathcal{M}u: \N \times \mathcal{X} \to \R}$ is defined for all
$(n,x) \in \N \times \mathcal{X}$ by 
    \begin{align*}
        \mathcal{M}u(n,x) = \left(Q_n \overline{u^{1} - c}\right)_x ,\quad \left(\overline{u^{1} - c}\right)_x = \max_{a \in A} \left(u^1_{a,x} - c_{a,x}\right),
    \end{align*}
for a given vector $c \in \R^{\mathcal{X} \times A}$, and $\{Q_n\} \subset \mathbb{R}^{\mathcal{X}\times \mathcal{X}}$ is a sequence of substochastic matrices, i.e., with non negative row sums bounded by 1. 
\end{prob}

\begin{ass}\label{ass_intro}
The functions $\{F_a\}_{a \in A}$ in Problem \ref{prob:intro} satisfy:
    \begin{enumerate}
        \item There exists $\beta > 0$ such that for any $n \in \N$, $x \in \mathcal{X}$, bounded functions $u, v: \N \times \mathcal{X} \to \R^A$ such that $u \leq v$, and $r, s \in \R^A$ such that $\theta \coloneqq r_a - s_a = \max_{j \in A} (r_j - s_j) \geq 0$, then 
        \begin{align*}
            F_a(n, x, r, u_a + \theta) - F_a(n, x, s, v_a) \geq \beta \theta.
        \end{align*}
        \item For any $n \in N$, $a\in A$, $r \in \R^A$ and continuous bounded function $\phi: \N \times \mathcal{X} \to \R$, the functions $x \mapsto F_a(n, x, r, \phi)$ are continuous and bounded. Moreover, the functions $r \mapsto F_a(n,x, r, \phi)$ are uniformly continuous for bounded $r$, uniformly with respect to $(n, x)\in \N \times \mathcal{X}$.
    \end{enumerate}
\end{ass}

The QVI \eqref{abstract_qvi_intro} is a generalisation of a monotone system with interconnected obstacles \cite{Reisinger_Zhang_penalty}, which can arise from the discretisation of optimal switching problems in continuous time. In our case, we shall refer to the operator $\mathcal{M}$ as the inspection operator. Much like the systems with interconnected obstacles, the QVI \eqref{abstract_qvi_intro} is typically not amenable to the use of policy iteration, as the matrices arising from the inspection operator do not necessarily satisfy the M-matrix or weakly chain diagonally dominant conditions \cite{weakly_chained}. We propose instead a penalty scheme, which sees use on variational inequalities \cite{forsyth_american_penalty, power_penalty, parabolic_vi} and extensions to HJB VIs \cite{reisinger_zhang_hjbvi, Witte_Reisinger_penalty_obstacle, Witte_Reisinger_penalty}/ QVIs \cite{Reisinger_Zhang_penalty}, as an approximation. Penalty schemes have seen comparable computational performance to policy iteration in HJB QVIs, and is robust to the choice of initial estimates \cite{Witte_Reisinger_penalty_obstacle, Witte_Reisinger_penalty}. An adaptation of the penalty scheme to the QVI \eqref{abstract_qvi_intro} circumvents issues with numerical instabilities arising from computing iterates of the policy update, and the penalised equation can be solved with semismooth Newton methods. We demonstrate in \Cref{section:numerical example} that the penalty method achieves quick convergence within a few iterations on a large system whilst also mapping out accurately the optimal policy.

The main contributions of our paper are as follows:
\begin{itemize}
    \item We formulate the observation cost model (OCM) for Markov decision processes where the time elapsed after an observation is considered as part of the augmented Markov system. We present the optimality equations obtained from dynamic programming for the finite horizon problem, discounted infinite horizon problem, and the respective problems with parameter uncertainty. In all cases the optimality equations are in the form of a QVI, which are structurally different to the Bellman-type equations from existing approaches in the literature.
    \item We establish a comparison principle for the class of QVIs \eqref{abstract_qvi_intro}, of which the solution to the OCM belongs to. The class of QVIs are a generalisation of monotone systems with interconnected obstacles as seen in \cite{Reisinger_Zhang_penalty}. We propose a penalty scheme for this class of QVIs \eqref{abstract_qvi_intro}, and demonstrate the monotone convergence of the penalised solutions towards the solutions of said QVI, thereby establishing constructively the existence of solutions.
    \item We demonstrate the numerical performance of our model by applying it to the time-discretised version of the HIV-treatment model \cite{winkelmann_markov_2014}. Our framework is compatible with the original results, and also shows qualitatively different optimal behaviour when dealing with large observation gaps.
\end{itemize}

The remainder of the paper is organised as follows. \Cref{section_mc_model} sets out the framework for the OCM and establishes the corresponding set of discrete QVIs. A model problem with an explicit solution is also provided to illustrate the setup. We also outline the case of parameter uncertainty in \Cref{section_parameter_uncertainty}. In \Cref{section_comparison} we prove a comparison principle for a class of discrete QVIs which subsumes the QVIs obtained in \Cref{section_mc_model}, as well as outlining the penalty method as a numerical scheme for the QVI. Finally the numerical experiments are presented in detail in \Cref{section:numerical example}.

\subsection{Notation for MDPs and POMDPs}\label{sub:intro_notation}

As the goal of the next section is to layout the OCM precisely by formulating the model in terms of a POMDP, we shall quote here some of the standard notation for MDPs, POMDPs and a brief overview of its construction. These are largely taken from \cite[Ch 4]{hernandez1989adaptive} and we refer the reader to the references within for further detail.

We will generally be considering Markov decision processes on finite state spaces. Whilst most arguments extend naturally to more general state spaces, we shall focus on the finite setting here to retain a simplified presentation. Let $\mathcal{P}(\mathcal{X})$ denote the space of probability measures over a set $\mathcal{X}$. If $\mathcal{X}$ is finite, we will also identify $\mathcal{P}(\mathcal{X})$ with the simplex $\Delta_{\mathcal{X}}$.

\begin{defn}\label{defn:mdp}
A Markov control model is a tuple $\langle \mathcal{X}, A, p, r \rangle$, where
\begin{itemize}
    \item $\mathcal{X}$ is the finite \textit{underlying state space};
    \item $A$ is the finite \textit{action space};
    \item $p:\mathcal{X} \times A \to \Delta_{\mathcal{X}}$ is the \textit{transition kernel};
    \item $r: \mathcal{X} \times A \to \R$ is the \textit{one-step reward function}.
\end{itemize}
\end{defn}
At each time $t$, a state $x_t \in \mathcal{X}$ is observed. The controller chooses an action $a_t \in A$ and receives a reward $r(x_t, a_t)$. The system then moves to a new state $x_{t+1} \in \mathcal{X}$ with probability $p(x_{t+1} \vert x_t, a_t)$ and the process repeats at time $t+1$. Actions are chosen according to a \textit{policy} $\pi = (\pi_t)_t$, which is a sequence of kernels $\pi_t: H_t \to A$, where $H_0 \coloneqq \mathcal{X}$ and $H_t \coloneqq (\mathcal{X} \times A)^t \times \mathcal{X}$ for $t \geq 1$, known as the \textit{history set} at time $t$. The set of all policies is denoted by $\Pi$. Given an initial state $x_0 \in \mathcal{X}$ and policy $\pi \in \Pi$, by the Ionescu-Tulcea theorem (see \cite[Appendix C]{hernandez1989adaptive}), there exists a unique probability measure $\PP^{\pi}_{x_0}$ on the canonical sample space $\Omega \coloneqq H_{\infty} \coloneqq (\mathcal{X} \times A)^{\infty}$, such that given $\omega = (x_0, a_0, x_1, a_1, \ldots) \in \Omega$,
\begin{align*}
    \PP^{\pi}_{x_0}(\omega) = \delta(x_0)\ \pi_0(a_0 \mid x_0)\ p(x_1 \mid x_0, a_0)\ \pi(a_1 \mid x_0, a_0, x_1) \ldots
\end{align*}
The objective is to maximise an objective function over the set of policies $\Pi$, for example, in the finite horizon case,
\begin{align*}
    J(\pi, x_0) = \E^{\pi}_{x_0} \left[ \sum^N_{n=0} r(x_n, a_n) \right],\quad \pi \in \Pi,\ x_0 \in \mathcal{X},
\end{align*}
where $N \in \N$ is the time horizon, and $\E^{\pi}_{x_0}$ is the expectation under the measure $\PP^{\pi}_{x_0}$. The value function is given by
\begin{align*}
    v(x) = \sup_{\pi \in \Pi}J(\pi, x),\quad x \in \mathcal{X}.
\end{align*}
It is well known that an optimal policy $\pi^{*}$ for an MDP is deterministic and Markovian, i.e. there exists deterministic functions $\{\phi_t\}_{t \geq 0}$ such that for $h_t = (x_0, a_0, \ldots, x_t) \in H_t$, $\pi^{*}_t (h_t) = \phi_t(x_t)$, and $v(x) = J(\pi^{*}, x)$.

In many cases, rather than having full information of the MDP, one instead has access to noisy observations correlated to the underlying MDP. This gives rise to the notion of partially observable Markov decision processes (POMDPs), which can also be described by a given tuple as follows.

\begin{defn}\label{defn:pomdp}
    A partially observable control model is a tuple $\langle \mathcal{X}, \mathcal{O}, A, p, p_0, q, q_0, r \rangle$, where
    \begin{itemize}
        \item $\mathcal{X}$ is the finite \textit{state space};
        \item $\mathcal{O}$ is the finite \textit{observation space};
        \item $A$ is the finite \textit{action space};
        \item $p: \mathcal{X} \times A \to \Delta_{\mathcal{X}}$ is the \textit{transition kernel};
        \item $p_0 \in \Delta_{\mathcal{X}}$ is the \textit{initial distribution};
        \item $q: A \times \mathcal{X} \to \Delta_{\mathcal{O}}$ is the\textit{ observation kernel};
        \item $q_0: \mathcal{X} \to \Delta_{\mathcal{O}}$ is the \textit{initial observation kernel};
        \item $r: \mathcal{X} \times A \to [0, \infty)$ is the \textit{one-step reward function}.
    \end{itemize}
\end{defn}

In this setting, given an underlying state $x_t \in \mathcal{X}$, an observation $\bar{x}_t \in \mathcal{O}$ is generated according to the observation kernel $q(\bar{x}_t\vert a_{t-1}, x_t)$. The controller chooses an action $a_t \in A$ based on their observations, rather than the values of the underlying states. For this, define the \textit{observable history sets}
\begin{align*}
    \mathcal{H}_0 \coloneqq \mathcal{O}, \ \mathcal{H}_t \coloneqq (\mathcal{O} \times A)^t \times \mathcal{O},\quad t \geq 1.
\end{align*}
A policy for a POMDP is now a sequence of kernels $\pi_t: \mathcal{H}_t \to A$. Denote the set of policies for the POMDP as $\Pi_{\mathrm{po}}$. By the Ionescu-Tulcea theorem again, given an initial distribution $p_0 \in \Delta_{\mathcal{X}}$ and policy $\pi \in \Pi_{\mathrm{po}}$, there exists a unique probability measure $\PP^{\pi}_{p_0}$ on the canonical space $\Omega = (\mathcal{X} \times \mathcal{O} \times A)^{\infty}$ such that for $\omega = (x_0, \bar{x}_0, a_0, x_1, \bar{x}_1, a_1, \ldots) \in \Omega$,
\begin{align*}
    \PP^{\pi}_{p_0}(\omega) = p_0(x_0)\ q_0(\bar{x}_0 \mid x_0)\ \pi_0(a_0 \mid \bar{x}_0)\ p(x_1 \mid x_0, a_0)\ q(\bar{x}_1 \mid x_1, a_0)\ \pi(x_1 \mid \bar{x}_0, a_0, \bar{x}_1) \ldots
\end{align*}
The maximisation is now performed over $\pi \in \Pi_{\mathrm{po}}$, with the objective and value function
\begin{align*}
    J(\pi, p_0) = \E^{\pi}_{p_0} \left[ \sum^N_{n=0} r(x_n, a_n) \right],\ v(p) = \sup_{\pi \in \Pi_{\mathrm{po}}} J(\pi, p),\quad \pi \in \Pi_{\mathrm{po}},\ p, p_0 \in \Delta_{\mathcal{X}}.
\end{align*}
Without knowledge of the underlying states, the POMDP is a non-Markovian problem. The standard approach to solve a POMDP is to consider an equivalent (fully observable) problem, known as the \textit{belief MDP}, on the space $Z \coloneqq \Delta_{\mathcal{X}}$. The Markovian strucutre is recovered when lifted to the belief MDP, so that classical dynamic programming techniques can be applied. The transition kernel for the \textit{belief state} $z = (z_t)_t$ can be constructed as follows: given the POMDP, construct a kernel $R:Z \times A \to \mathcal{X} \times \mathcal{O}$ such that
\begin{align*}
    R(x, \bar{x} \mid z, a) = q(\bar{x} \mid a, \pr{x}) p(\pr{x} \mid x, a) z(x),\quad (x, \bar{x}) \in \mathcal{X} \times \mathcal{O},\ (z,a) \in Z \times A.
\end{align*}
It can be shown that there exists a kernel $\pr{H}: Z \times A  \to \mathcal{X}$, such that $R$ can be disintegrated into
\begin{align*}
    R(x, \bar{x} \mid z, a) = \pr{H}(x \mid z, a , \bar{x}) \pr{R}( \bar{x} \mid z, a),\quad (x, \bar{x}) \in \mathcal{X} \times \mathcal{O},\ (z,a) \in Z \times A.
\end{align*}
where $\pr{R}$ is the marginal of $R$ on $\mathcal{O}$. Then, letting $H_{z,a,\bar{x}} = \pr{H}( \cdot \mid z, a, \bar{x}) \in Z$, define the kernel $\pr{q}: Z \times A \to Z$ by
\begin{align*}
    \pr{q}(\pr{z} \mid z ,a) = \sum_{\bar{x} \in \mathcal{O}} \delta_{H_{z,a,\bar{x}}}(\pr{z}) \pr{R}(\bar{x} \mid z,a ),\quad z, \pr{z} \in Z,\ a \in A.
\end{align*}
Then one takes $\pr{q}$ as the transition kernel for the belief state, and construct the initial kernel $\pr{q}_0$ analogously. The belief MDP is then $\langle Z, A, \pr{q}, \pr{q}_0, r_z \rangle$, where $r_z : Z \times A \to \R$ as $r_z(z,a) = \sum_{x \in \mathcal{X}} r(x,a) z(x)$. The belief state can be interpreted as the conditional distribution of the underlying state $x_t$, given the observed history $(\bar{x}_0, a_0, \ldots, \bar{x}_t)$. Let $\Pi^z$ be the set of policies for the belief MDP, which are now a sequence of kernels on $A$, given the history sets $\mathcal{H}^z_t = (Z \times A)^t \times Z$. The objective and value function for the belief MDP are given by
\begin{align*}
    J_z(\pi^z, z_0) \coloneqq \E^{\pi^z}_{z_0} \left[ \sum^N_{n=0} r_z(z_n, a_n) \right],\ v_z(z) = \sup_{\pi^z \in \Pi^z} J_z(\pi^z, z),\quad \pi^z \in \Pi^z,\ z, z_0 \in Z,
\end{align*}
where $\E^{\pi^z}_{z_0}$ is the expectation over the canonical space $(Z \times A)^{\infty}$ under the policy $\pi^z$ and initial condition $z_0 \in Z$. It can then be shown that policies in $\Pi_{\mathrm{po}}$ are equivalent to policies in $\Pi^z$, in the sense that any $h_t \in \mathcal{H}_t$ can be mapped to a corresponding $h^z_t \in \mathcal{H}^z_t$, so that given $\pi^z \in \Pi^z$, one can construct a corresponding $\pi \in \Pi_{\mathrm{po}}$ via
\begin{align*}
    \pi(\cdot \mid h_t) \coloneqq \pi^z(\cdot \mid h^z_t),
\end{align*}
and the conditional probabilities assigned on the action set $A$ are the same. Moreover, $\pi^{*} \in  \Pi_{\mathrm{po}}$ is optimal for $J$ if and only if $\pi^{*,z} \in \Pi^z$ is optimal for $J_z$, and it holds that
\begin{align*}
    v_z(z) = J_z(\pi^{*,z}, z) = J(\pi^{*},z) = v(z),\quad z \in Z = \Delta_{\mathcal{X}}.
\end{align*}
Thus, when considering a POMDP, it is sufficient to consider its equivalent MDP in the belief state, of which the optimal policy is Markovian with respect to $z=(z_t)_t$.

\section{Problem formulation}\label{section_mc_model}

In the Observation Cost Model, the process evolves sequentially as follows. At each time $t$, the controller decides if they would like to pay \textit{an observation cost} $c_{\mathrm{obs}}>0$ to observe the state $x_t \in \mathcal{X}$. If they decide to do so, then the controller applies the action $a_t \in A$ according to some suitable policy $\pi$, and receives a reward $r(x_t, a_t)$. If they decide not to observe, then no cost is paid, but the controller cannot change the action value, so that $a_t= a_{t-1}$. We assume that the reward $r(x_t, a_t)$ is collected and `locked in' at time $t$, but is not observable to the user if $x_t$ is not observable. In both instances, the system moves to a new state $x_{t+1} \in \mathcal{X}$ according to the transition kernel $p(x_{t+1} \mid x_t, a_t)$.

We now formally write down the objective function, and shall make precise the terms appearing within in the rest of this section. In view of the description above, the controller wishes to maximise (for example, in the finite horizon case)
\begin{align}\label{eq:formal_ocm_objective}
    \E^{\pi}\left[ \sum^{N}_{n=0} \left( r(x_n, a_n) -  i_n \cdot c_{\mathrm{obs}} \right)\right],
\end{align}
where $\pi$ a suitably admissible control policy to be made precise later. The sequence $i=(i_n)_n$ takes values in $\mathcal{I} \coloneqq \{0, 1\}$ and will be referred to as the \textit{inspection values}. A value of $i_n = 1$ represents an observation made at time $n$, so that observation cost $c_{\mathrm{obs}}$ is deducted from the total reward in \eqref{eq:formal_ocm_objective}. Conversely no observations are made if $i_n = 0$. \Cref{flow_chart_obs} illustrates the sequential flow of a standard MDP, compared to that of the OCM.

\begin{figure}[!t]
\centering
\begin{tikzpicture}[node distance=2cm]
\draw [very thick, ->] (0,-1) -- ++(1.0,0) node[right, standard, label = $x_n$] (u1) {State};
\draw [very thick, ->] (u1.east) -- ++(1.0,0) node[right, standard, label = $a_n$] (a1) {Action};
\draw [very thick, ->] (a1.east) -- ++(1.0,0) node[right, standard, label = $x_{n+1}$] (u2) {State};
\draw [very thick, ->] (u2.east) -- ++(1.0,0) node[right, standard, label = $a_{n+1}$] (a2) {Action};
\draw [very thick, ->] (a2.east) -- ++(1.0,0);
\end{tikzpicture}\vspace{1cm}
\begin{tikzpicture}[node distance=2cm]
\draw [very thick, ->] (1,-1) -- ++(1.0,-1.0) node[right, decision] (d1) {\begin{tabular}{c}Observation \\ Decision\end{tabular}};
\draw [very thick, ->] (1,-3) -- ++(1.0,1.0);
\draw [very thick, ->] (d1.east) -- node[above left, pos = 0.6]{$i_n=1$} ++(1.0, 1.0)node[right, standard, label = $x_n$] (u1) {State};
\draw [very thick, ->] (u1.east) -- ++(1.65,0) node[right, standard, label = $a_n$] (a1) {Action};
\draw [very thick, ->] (d1.east) -- node[below left, pos = 0.6]{$i_n=0$} ++(1.0, -1.0) node[right, frozen, label= {$a_n = a_{n-1}$}] (f1) {No action change};
\draw [very thick, ->] (a1.east) -- ++(1.0,-1.0) node[right, decision] (d2) {\begin{tabular}{c}Observation \\ Decision\end{tabular}};
\draw [very thick, ->] (f1.east) -- ++(1.0, 1.0);
\draw [very thick, ->] (d2.east) -- node[above left, pos = 0.6]{$i_{n+1}=1$} ++(1.0, 1.0);
\draw [very thick, ->] (d2.east) -- node[below left, pos = 0.6]{$i_{n+1}=0$} ++(1.0, -1.0);
\end{tikzpicture}
\caption{Top: Standard formulation with full observations; Bottom: The inclusion of observation costs leads to an extra decision step.}\label{flow_chart_obs}
\end{figure}
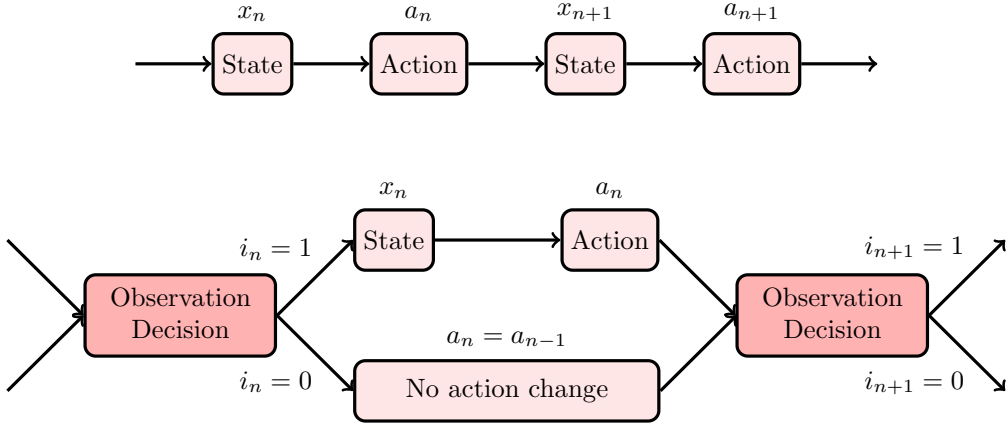

We now proceed to establish the OCM as a non-standard form of a POMDP, in order to fully make sense of \eqref{eq:formal_ocm_objective}. A policy should output an action value $a_n \in A$, as well as an inspection value $i_n \in \mathcal{I}$. The observation space is represented by $\mathcal{X} \cup \{ \varnothing \}$: either the underlying chain with values in $\mathcal{X}$ is observed, or the dummy variable $\varnothing$ nothing is observed, which represents the case of no observations. A final but subtle point is the difference in the sequential structure of the OCM compared to a POMDP. In the case of a POMDP, first a state $x_n$ is generated, follow by the observation $\bar{x}_n$, and then the action $a_n$. Thus a realisation on the canonical space of a POMDP looks like:
\begin{align*}
(x_0, \bar{x}_0, a_0, x_1, \bar{x}_1, a_1, \ldots).
\end{align*}
In the OCM, as depicted in \Cref{flow_chart_obs}, the observation occurs \textit{after} the inspection value, after which the action value follows. Thus a realisation of the system of an OCM will instead look like
\begin{align}\label{ocm_sequence}
(x_0, i_0, \bar{x}_0, a_0, x_1, i_1, \bar{x}_1, a_1, \ldots).
\end{align}
In order to obtain the sequential structure of `state - observation - action' for the OCM, we augment the sequence \eqref{ocm_sequence} with fictitious state and observation values, and treat both $a_n$ and $i_n$ as an `action'. By suitably augmenting the transition and observation kernels, the OCM takes the form of a POMDP, over the timescale of $\frac{1}{2}\N$. This augmented sequence then takes the form of 
\begin{align*}
&(x_0, \bar{x}_0, \pi_0, x_{1/2}, \bar{x}_{1/2}, \pi_{1/2}, x_1, \bar{x}_1, \pi_1, \ldots) \nonumber \\
\coloneqq&\ (x_0, \bar{x}_0, i_0, x_{0}, \bar{x}_{0}, a_0, x_1, \bar{x}_1, i_1, \ldots).
\end{align*} 
This leads us to the following definition.

\begin{defn}\label{defn: ocm}
Given an MDP $\langle \mathcal{X}, A, p, r \rangle$, the associated observation cost model (OCM) is defined as the POMDP $\langle \mathcal{X}, \mathcal{X}_{\varnothing}, \mathcal{A}, p, p_0, q, q_0, r_{\mathrm{obs}} \rangle$ (see \Cref{defn:pomdp}), on the time scale $\frac{1}{2}\N = \{0, 1/2, 1, \ldots \}$, where
\begin{itemize}
    \item $\mathcal{X}_{\varnothing} = \mathcal{X}\ \cup \{ \varnothing\}$ is the \textit{observation space}, with $\varnothing$ a dummy variable representing no observations.
    \item $\mathcal{A}$ is the \textit{disjoint} union of $A$ and $\mathcal{I}$, with time dependent admissible sets given by $\mathcal{A}(n) = \mathcal{I}$ and $\mathcal{A}(n+\frac{1}{2}) = A$;
    \item $p: \mathcal{X} \times \mathcal{A} \to \Delta_{\mathcal{X}}$ is the transition kernel, with its domain extended to $\mathcal{X} \times \mathcal{A}$ by defining
    \begin{align*}
        p(\cdot \mid x, i) = \delta_x(\cdot) ,\quad i \in \mathcal{I}.
    \end{align*}
    \item $q: \mathcal{A} \times \mathcal{X} \to \Delta_{\mathcal{X}_{\varnothing}}$ is the \textit{observation kernel},
     given by
     \begin{align*}
        q(\cdot \mid a, x) &= \delta_{\varnothing}(\cdot),  \quad a \in A, \\
         q(\cdot \mid i, x) &= i\cdot\delta_{x}(\cdot) + (1-i)\delta_{\varnothing}(\cdot), \quad i \in \mathcal{I}.     
     \end{align*}
     \item $r_{\mathrm{obs}}: \mathcal{X} \times \mathcal{A} \to \R$ is the  \textit{one-step reward function} given by
     \begin{align*}
         r_{\mathrm{obs}}(x,a) &= r(x,a),  \quad a \in A,\\
         r_{\mathrm{obs}}(x,i) &= -i\cdot c_{\mathrm{obs}}, \quad i \in \mathcal{I}. 
     \end{align*}
\end{itemize}
\end{defn}

The kernels $p$ and $q$ above are defined such that transitions of the underlying chain only occurs at the integer steps, and new observations at the half steps. Let us write the observable history sets here as
\begin{align*}
    \mathcal{H}_0 = \mathcal{X}_{\varnothing},\ \mathcal{H}_t = (\mathcal{X}_{\varnothing} \times \mathcal{A})^{2t} \times \mathcal{X}_{\varnothing},\quad t \in \frac{1}{2}\N.
\end{align*}
To be precise, we should only consider \textit{admissible} history sets, that is state-action pairs that satisfy the constraints $\mathcal{A}(n) = \mathcal{I}$ and $\mathcal{A}(n+\frac{1}{2}) = A$, but we shall take this assumption as implicit for ease of notation. A policy $\pi$ is hence a sequence of kernels $\pi_t: \mathcal{H}_t \to \mathcal{A}$.

As before, a policy $\pi$ as defined above and an initial distribution $p_0 \in \Delta_{\mathcal{X}}$ induces a unique measure $\PP^{\pi}_{p_0}$ on the canonical sample space $\Omega = (\mathcal{X} \times \mathcal{X}_{\varnothing} \times\mathcal{A})^{\infty}$. With slight abuse of notation, we also write $\pi$ for the values applied by the policy. Then, the action values $a=(a_n)_n$ and inspection values $i=(i_n)_n$ appearing in \eqref{eq:formal_ocm_objective} can be recovered by defining
\begin{align*}
i_n = \pi_{n}, \quad a_n = \pi_{n+\frac{1}{2}}.
\end{align*}

As we are assuming in the OCM that actions remain constant between new observations. We will have to consider a smaller class of admissible policies.
\begin{defn}\label{defn:policy_ocm}
An \textbf{admissible policy} $\pi = (\pi_t)_t$ for the OCM is a sequence of kernels $\pi_t: \mathcal{H}_t \to \Delta_{\mathcal{A}}$ which satisfies the following: for $n \in \N$, if $h_{n+\frac{1}{2}} = (\bar{x}_0, \pi_0, \ldots, \bar{x}_{n-\frac{1}{2}}, \pi_{n-\frac{1}{2}}, \bar{x}_n, \pi_n, \bar{x}_{n+\frac{1}{2}})$ with $\pi_n = i_{n} = 0$, then
\begin{align*}
    \pi_{n+\frac{1}{2}}\left(\ \cdot\ \middle\vert\ h_{n+\frac{1}{2}}\right) = \delta_{\pi_{n-\frac{1}{2}}} =  \delta_{a_{n-1}}.
\end{align*}
The set of admissible policies for the observation cost model is denoted $\Pi_{\mathrm{obs}}$.
\end{defn}

With the above setup, we can give a full meaning to the expression \eqref{eq:formal_ocm_objective} by writing
\begin{align*}
    J(\pi, p_0) \coloneqq \E^{\pi}_{p_0} \left[ \sum^{2N}_{n=0} r_{\mathrm{obs}} (x_{n/2}, \pi_{n/2}) \right] = \E^{\pi}_{p_0}\left[ \sum^{N}_{n=0} \left( r(x_n, a_n) -  i_n \cdot c_{\mathrm{obs}} \right)\right],\quad \pi \in \Pi_{\mathrm{obs}},\ p_0 \in \Delta_{\mathcal{X}}.
\end{align*}

\begin{rem}\label{rem:initial_kernels}
    \textbf{Regarding the initial distribution $p_0$ and observation kernel $q_0$:} for the purposes of this paper, we will assume that an observation (made at some previous time) is always available. This allows a consistent characterisation of the belief state by a finite tuple, which leads to a system of finite-dimensional QVIs in \Cref{sub:finite_horizon,sub:infinite_horizon}. Thus, we will only consider initial kernels $p_0$ that is in the form of some $n$-step transition probabilities of the kernel $p$, and the initial observation kernel $q_0$ will be taken as the Dirac measure $q_0(\cdot \mid x) = \delta_{x}(\cdot)$.
\end{rem}

\begin{rem}
The formulation with the half time-steps and the inclusion of fictitious state/ observation variables above are strictly a theoretical construct, such that the OCM can be reframed as a POMDP, and thus allowing us to directly appeal to standard results to formulate dynamic programming. It will be shown later that the half steps become redundant once dynamic programming is established, and the value function will only need to be considered over the integer steps.
\end{rem}

\subsection{Finite horizon problem}\label{sub:finite_horizon}

For the finite horizon problem, let $N \in \N$ be the time horizon. For a policy $\pi \in \Pi_{\mathrm{obs}}$, the objective function is
\begin{align*}
    \E^{\pi}_{p_0} \left[ \sum^{2N}_{n=0} r_{\mathrm{obs}} (x_{n/2}, \pi_{n/2}) \right] = \E^{\pi}_{p_0}\left[ \sum^{N}_{n=0} \left( r(x_n, a_n) -  i_n \cdot c_{\mathrm{obs}} \right)\right].
\end{align*}

With the OCM problem characterised as a POMDP, we consider the belief MDP, with the belief state given by $\mathbb{P}^{\pi}_{p_0}( x_t \mid h_t)$, where $h_t \in \mathcal{H}_t$. By the Markov property, the belief state is fully characterised by the controller's most recently observed information, in that
\begin{align*}
\mathbb{P}^{\pi}_{p_0}(x_t \mid h_t) =  p^{(\floor{t}-k)}(x_t \mid x_k, a_k),
\end{align*}
where $p^{(\floor{t}-k)}$ is the $(\floor{t}-k)$-step transition kernel of the underlying process $X$, and $k \in \N$ is the last occurrence such that $i_k = 1$. Note that $k=t$ implies an immediate observation, and the belief state reduces trivially to a Dirac measure at $x_t$. The belief state therefore has a finite dimensional parametrisation, and we can consider instead the \textbf{augmented state} $y=(y_t)_{t \in \frac{1}{2}\N}$:
    \begin{align*}
        y_t \coloneqq 
        \begin{cases}
        (k, x_k, a_k),&\ \mbox{if $ \N \ni k = \argmax_{n\leq t} \{i_{n} = 1 \} \neq t$ },\\
        (t, x_t, \varnothing),&\ \mbox{otherwise},
        \end{cases}
    \end{align*}
where $\varnothing$ also acts as a dummy variable here. The three components of $y_t$ represent the most recent observation time $k$, with the correspondingly observed state $x_k \in \mathcal{X}$, and applied action $a_k \in A$.

Given this equivalence in representation, we can consider the OCM as an MDP with the tuple $\langle \mathcal{Y}, \mathcal{A}, p_y, r_y \rangle$ on the timescale $\frac{1}{2}\N$, where
\begin{itemize}
    \item $\mathcal{Y} \coloneqq \N \times \mathcal{X} \times A_{\varnothing}$ is the \textit{augmented state space};
    \item $\mathcal{A}$ is the \textit{disjoint} union of $A$ and $\mathcal{I}$, with admissible sets $\mathcal{A}(n) = \mathcal{I}$ and $\mathcal{A}(n+\frac{1}{2}) = A$;
    \item $p_y = (p_{y,t})_{t \in \frac{1}{2}\N}$ is a (time-inhomogeneous) transition kernel on $\mathcal{Y}$ given $\mathcal{Y} \times \mathcal{A}$: for $n \in \N$, $y = (k,x,a),\ \hat{y} \in \mathcal{Y}$, $i \in \mathcal{I}$, and $a^{\prime} \in A$,
    \begin{align}
        p_{y,n}(\hat{y} \mid y , i) &= i \cdot p^{(n-k)}(\hat{x} \mid x, a) \mathbbm{1}_{\{\hat{y} = (n, \hat{x}, \varnothing) \}} + (1-i) \mathbbm{1}_{\{\hat{y}= y\}}, \label{eq:kernel_ocm_y}\\
        p_{y, n + \frac{1}{2}}(\hat{y} \mid y, a^{\prime}) &= \mathbbm{1}_{\{\hat{y} = (k, x, a^{\prime}),\ \pr{a} = a\}}, \label{eq:kernel_ocm_y_2}
    \end{align}
    and for $y = (n,x,\varnothing)$,
    \begin{align}\label{eq:kernel_ocm_y_3}
        p_{y, n + \frac{1}{2}}(\hat{y} \mid y, a^{\prime}) = \mathbbm{1}_{\{\hat{y} = (n, x, a^{\prime})\}}.
    \end{align}
    \item  $r_y = (r_{y,t})_{t \in \frac{1}{2}\N}$ is a time-dependent one-step reward function on $\mathcal{Y} \times \mathcal{A}$: for $y = (k,x,a)$, $i \in \mathcal{I}$, $a^{\prime} \in A$,
    \begin{align}
    r_{y,n}(y, i) &= - i \cdot c_{\mathrm{obs}}, \nonumber \\
    r_{y,n + \frac{1}{2}}( y, a^{\prime})& = \sum_{x^{\prime} \in \mathcal{X}} r(x^{\prime}, a^{\prime}) p^{(n-k)}(x^{\prime} \mid x, a), \label{r_y1}
    \end{align}
    and for $y = (n,x,\varnothing)$,
    \begin{align}\label{r_y2}
        r_{y,n + \frac{1}{2}}(y, a^{\prime})& = r(x, a^{\prime}).
    \end{align}
\end{itemize}

In this augmented problem, define its observable history sets as $\mathcal{H}^y_t = (\mathcal{Y} \times \mathcal{A})^{2t} \times \mathcal{Y}$, $t \in \frac{1}{2} \N$. Policies $\pi^y$ are then a sequence of kernels $\pi^y_t: \mathcal{Y} \to \Delta_{\mathcal{A}}$.  For the set of admissible policies of this augmented MDP, we will have to consider the corresponding `image' of $\Pi_{\mathrm{obs}}$. This in turn, is equivalent to imposing a state constraint on the admissible action sets: 
\begin{align*}
    \mathcal{A}(n+1/2, (k,x,a) ) = \{a \},\quad  n \in \N,\ a \in A.
\end{align*}
With the above constraints noted, we shall not distinguish between a policy $\pi \in \Pi_{\mathrm{obs}}$ and its corresponding policy $\pi^y$ in the augmented MDP, and write $\pi$ for both.

For the finite horizon problem, the objective function for the augmented state MDP is
\begin{align*}
     J(t, y, \pi) = \E^{\pi}_y\left[ \sum^{2N}_{n=t} r_{y,\frac{n}{2}}\left(y_{\frac{n}{2}}, \pi_{\frac{n}{2}}\right) \right],\quad 0 \leq t \leq 2N,\ y \in \mathcal{Y},\ \pi \in \Pi_{\mathrm{obs}},
\end{align*}
with value function
\begin{align}\label{eq:value function_mcdm_ch2}
    v(t, y) = \sup_{\pi \in \Pi_{\mathrm{obs}}} J(t,y, \pi).
\end{align}
The proposition below shows the dynamic programming equation in the form of a quasi-variational inequality, for which the value function for the OCM satisfies.
\begin{prop}
\sloppy For $n \in \N$, $y = (k,x,a) \in \mathcal{Y}$, define $v^{n,k}_{a,x} = v(n,y)$ as in \eqref{eq:value function_mcdm_ch2}. Then the value function satisfies the following quasi-variational inequality (QVI): for all ${0 \leq k < n \leq N-1}$, $x \in \mathcal{X}$, and $a \in A$,  
\begin{align} \label{qvi_finite}
    \min \bigg\{ & v^{n,k}_{a,x} - v^{n+1,k}_{a,x} - \Big(P^{n-k}_a\ r_a\Big)_x\ , \nonumber \\
    & v^{n,k}_{a,x} - \bigg( P^{n-k}_a\ \overline{v^{n+1,n} + r}\bigg)_x  +  c_{\mathrm{obs}}   \bigg\} = 0, 
\end{align}
with the terminal condition
\begin{align*}
    v^{N,k}_{a,x} &= \big(P^{N-k} r_a\big)_x,
\end{align*}
where $P^n_a$ is the $n$-step transition matrix with constant action $a$, and 
\begin{align*}
    (r_{a})_x = r(x,a), \quad \bar{r}_x = \max_{a \in A}r(x,a), \\ \left(\overline{v^{n+1,n} + r}\right)_x = \max_{a \in A} (v^{n+1,n}_{a,x} + r_{a,x}).
\end{align*}
\end{prop}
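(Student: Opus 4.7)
The plan is to derive \eqref{qvi_finite} as the one-step Bellman equation for the augmented MDP $\langle \mathcal{Y}, \mathcal{A}, p_y, r_y \rangle$, obtained by telescoping two consecutive half-time steps --- the inspection choice at integer time $n$ and the action choice at $n + \frac{1}{2}$ --- into a single equation at integer time $n$. Since the reformulation in \Cref{defn: ocm} and the subsequent discussion casts the OCM as a standard time-inhomogeneous MDP on $\mathcal{Y}$ with state-dependent admissible actions (in particular, $\mathcal{A}(n+\frac{1}{2},(k,x,a)) = \{a\}$ enforcing the no-change rule after a non-observation), the classical DPP for MDPs applies and the optimal policy is Markovian in $y$. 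The two summands in the $\min$ of \eqref{qvi_finite} will correspond to the two choices $i_n \in \{0,1\}$.

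Fix $0 \leq k < n \leq N-1$ and $y = (k,x,a)$. Applying the DPP at time $n$ gives
\begin{equation*}
v(n,y) = \max_{i \in \mathcal{I}}\Big\{r_{y,n}(y,i) + \sum_{\hat y \in \mathcal{Y}} p_{y,n}(\hat y \mid y,i)\, v(n+\tfrac{1}{2},\hat y)\Big\},
\end{equation*}
together with an analogous expression for $v(n+\frac{1}{2},\cdot)$ in terms of $v(n+1,\cdot)$ subject to admissibility of $a'$. For the branch $i_n = 0$, \eqref{eq:kernel_ocm_y} leaves $y$ unchanged at $n+\frac{1}{2}$ and admissibility forces $a'=a$, so chaining \eqref{eq:kernel_ocm_y_2} with \eqref{r_y1} produces the value $v^{n+1,k}_{a,x} + (P^{n-k}_a r_a)_x$. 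For the branch $i_n = 1$, the cost $c_{\mathrm{obs}}$ is paid, by \eqref{eq:kernel_ocm_y} the state jumps to $(n,\hat x,\varnothing)$ with probability $p^{(n-k)}(\hat x \mid x,a)$, and at $n+\frac{1}{2}$ any $a' \in A$ is admissible; \eqref{eq:kernel_ocm_y_3} combined with \eqref{r_y2} then yields the inner maximisation $\max_{a' \in A}\big(r(\hat x,a')+v^{n+1,n}_{a',\hat x}\big)$, and averaging over $\hat x$ produces $-c_{\mathrm{obs}}+ (P^{n-k}_a \overline{v^{n+1,n}+r})_x$. Taking the maximum of the two branches and rearranging as $\min\{\cdot,\cdot\}=0$ gives \eqref{qvi_finite}.

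For the terminal condition at $n = N$, no further inspection or action update is available beyond the horizon, so only the expected reward at time $N$ under the previously applied action $a$ contributes, yielding $v^{N,k}_{a,x} = (P^{N-k}_a r_a)_x$. The principal bookkeeping obstacle I anticipate is the careful threading of the state-constrained admissibility through the two half-steps: the no-observation branch must forgo any optimisation over $a'$, while the observation branch recovers full freedom in $A$. Once this separation is made explicit, the remaining computations are routine applications of the kernels \eqref{eq:kernel_ocm_y}--\eqref{eq:kernel_ocm_y_3} and rewards \eqref{r_y1}--\eqref{r_y2}.
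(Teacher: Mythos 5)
Your proposal is correct and follows essentially the same route as the paper's proof: apply the dynamic programming principle on the augmented MDP $\langle \mathcal{Y}, \mathcal{A}, p_y, r_y \rangle$ over the two half-steps, evaluate the $i_n=0$ and $i_n=1$ branches via the kernels \eqref{eq:kernel_ocm_y}--\eqref{eq:kernel_ocm_y_3} and rewards \eqref{r_y1}--\eqref{r_y2}, and rearrange the resulting $\max$ into the $\min\{\cdot,\cdot\}=0$ form of \eqref{qvi_finite}. If anything, your version is slightly more explicit than the paper's about how the state-constrained admissibility $\mathcal{A}(n+\tfrac{1}{2},(k,x,a))=\{a\}$ forces $a'=a$ in the no-observation branch, and about the terminal condition, which the paper's proof leaves implicit.
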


\begin{proof}
A standard application of dynamic programming gives us
\begin{align*}
    v(t,y) = \sup_{\pi \in \Pi_{\mathrm{obs}}} \left\{r_{y,t}(y, \pi_t) + \E^{\pi}\left[v\left(t + \frac{1}{2}, y_{t + \frac{1}{2}}\right) \right] \right\}.
\end{align*}
Expanding explicitly, for $n \in \N$ and $y=(k,x,a) \in \mathcal{Y}$,
\begin{align*}
    v(n, (k,x,a)) = \max \left\{ -c_{\mathrm{obs}} + \max_{a \in A}\ \E^a \left[ v\left(n + \frac{1}{2}, (n, x_n, \varnothing)\right)\right],\ v\left(n+ \frac{1}{2}, (k,x,a)\right) \right\},
\end{align*}
where $\E^a$ is the expectation taken with respect to a constant $a \in A$. Furthermore
\begin{align*}
     v\left(n+ \frac{1}{2}, (k,x,a)\right) &= \E^a \left[ r(x_n, a) \right] + v(n+1, (k,x,a)), \\
     v\left(n + \frac{1}{2}, (n, x, \varnothing)\right) &= \max_{a \in A} \left\{ r(x,a) + v(n+1, (n,x,a))\right\}
\end{align*}
Thus, by combining the inspection stage and the action stage together, and rearranging the terms accordingly, we obtain the QVI in the desired form.
\end{proof}

The optimal policy $\pi^{*}$ is then Markovian with respect to the augmented state $y$, i.e. the optimal policy depends on the most recent observation. Given a solution to the QVI \eqref{qvi_finite}, one can retrieve the optimal policy at time $n$, by first finding the region where the minimum is achieved, which determines if an inspection is optimal. If an inspection is optimal, one observes the latest state, say $x$, and the optimal action is given by $\argmax_{a \in A}(v^{n+1,n}_{a,x} + r_{a,x})$.

\begin{rem}[\textbf{On the relation between observation costs and switching costs}]\label{rem:switching}
    Due to the nature of the piecewise constant controls, the OCM resembles an optimal switching problem, where switching costs $\{g_{ij}\}_{i,j\in A}$ are present, such that $g_{ij}$ is paid each time action $i$ is changed to action $j$. There are two main differences between the two problems. Firstly, for the OCM, the optimal policy on both inspections and actions have to take into account the lack of updated observations. Secondly, in switching problems, $g_{ii}$ is assumed to be 0 for all $i \in A$, i.e., no cost is paid if no switching occurs. This contrasts to the OCM, where $c_{\mathrm{obs}} >0$ is paid at each observation, regardless of the action applied afterwards. Indeed, one may decide that no change in action is required upon observation, but nonetheless the observation cost has to be paid upfront in order to arrive at such a conclusion. We can incorporate switching costs $\{g_{ij}\}_{i,j\in A}$ into the OCM by writing the augmented reward in the form (see (\ref{r_y1}, \ref{r_y2}))
    \begin{align*}
        r_{y,n+\frac{1}{2}}(y, \pr{a}) &=  \sum_{x^{\prime} \in \mathcal{X}} r(x^{\prime}, a^{\prime}) p^{(n-k)}(x^{\prime} \mid x, a) - g_{a, \pr{a}}, \ &y =(k,x,a), \\
        r_{y,n + \frac{1}{2}}(y, a^{\prime})& = r(x, a^{\prime}) - g_{a, \pr{a}},\ &y = (n,x,\varnothing).
    \end{align*}
\end{rem}

\subsection{Infinite horizon problem}\label{sub:infinite_horizon}

For the discounted infinite horizon problem of the OCM, we will have to consider the appropriate stationary formulations. This can easily be obtained by further considering $(n, y_n)$ as an augmented state. Now recall that the transition kernel of $y$ in \Cref{eq:kernel_ocm_y,eq:kernel_ocm_y_2,eq:kernel_ocm_y_3}
depends on $n$ and $k$ strictly through the difference $n-k$. Hence, after relabelling, it is sufficient to consider $ y = (n,x,a)$ as the augmented state, where here $n$ now represents the time elapsed from the previous observation, rather than the standard linear passage of time. The objective function of this equivalent MDP is
\begin{align*}
     J(y, \pi) = \E^{\pi}\left[ \sum^{\infty}_{n=0} \gamma^{\floor{\frac{n}{2}}} r_y\left(y_{\frac{n}{2}}, \pi_{\frac{n}{2}}\right) \right],\quad y \in \mathcal{Y},\ \pi \in \Pi_{\mathrm{obs}},
\end{align*}
where $\gamma \in (0,1)$ is the discount factor, the value function is
\begin{align}\label{eq:value function_inf_mcdm_ch2}
    v(y) = \sup_{\pi \in \Pi_{\mathrm{obs}}} J(y, \pi).
\end{align}
This gives us the following QVI for the value function, which we shall state here without proof.

\begin{prop}\label{prop:inf_dpp}
For $y = (n,x,a) \in \mathcal{Y}$, define $v^{n}_{a,x} = v(y)$. Then the value function \eqref{eq:value function_inf_mcdm_ch2} satisfies the following quasi-variational inequality (QVI): for all $n \geq 1$, $x \in \mathcal{X}$, and $a \in A$,  
\begin{align} \label{qvi_infinite}
    \min \bigg\{ & v^{n}_{a,x} - \gamma v^{n+1}_{a,x} - \Big(P^{n}_a\ r_a\Big)_x\ , \nonumber \\
    & v^{n}_{a,x} - \bigg( P^{n}_a\ \overline{\gamma v^{1} + r}\bigg)_x  +  c_{\mathrm{obs}}   \bigg\} = 0, 
\end{align}
where $P^n_a$ is the $n$-step transition matrix with constant action $a$, and
\begin{align*}
    (r_{a})_x = r(x,a), \quad \bar{r}_x = \max_{a \in A}r(x,a) \\
    \left(\overline{\gamma v^{1} + r}\right)_x = \max_{a \in A} (\gamma v^1_{a,x} + r_{a,x}).
\end{align*}
\end{prop}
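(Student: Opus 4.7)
The plan is to mirror the finite-horizon argument from the preceding proposition, replacing backward induction by the stationary dynamic programming principle (DPP) for the discounted augmented MDP. Since $\mathcal{X}$ and $A$ are finite and the reward together with $c_{\mathrm{obs}}$ are bounded, the value function in \eqref{eq:value function_inf_mcdm_ch2} is bounded by $(\|r\|_\infty + c_{\mathrm{obs}})/(1-\gamma)$, so the standard DPP for the tuple $\langle \mathcal{Y}, \mathcal{A}, p_y, r_y \rangle$ on the half-integer timescale applies (see, e.g., \cite[Ch.~4]{hernandez1989adaptive}). A key bookkeeping point is that only transitions between consecutive \emph{integer} real times incur a factor of $\gamma$, while the virtual half-steps inserted to embed the OCM as a POMDP (cf.\ \Cref{defn: ocm}) are discount-neutral.

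Starting from $y = (n,x,a)$ with $n \geq 1$, the DPP decomposes into an inspection decision followed by an action decision. If $i = 0$, admissibility (\Cref{defn:policy_ocm}) forces the action to remain $a$, so the expected reward collected over the next real time step is $(P^n_a r_a)_x$ and the augmented state advances to $(n+1,x,a)$, contributing $\gamma v^{n+1}_{a,x} + (P^n_a r_a)_x$. If $i = 1$, the cost $c_{\mathrm{obs}}$ is paid, the current state $X \sim P^n_a(\cdot\mid x)$ is revealed, and the optimal $a' \in A$ is chosen to maximise $r(X, a') + \gamma v^1_{a', X}$; taking the expectation over $X$ gives $-c_{\mathrm{obs}} + (P^n_a\, \overline{\gamma v^{1} + r})_x$. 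The Bellman equation is therefore
\begin{align*}
    v^n_{a,x} = \max\Big\{ \gamma v^{n+1}_{a,x} + (P^n_a r_a)_x,\ -c_{\mathrm{obs}} + (P^n_a\, \overline{\gamma v^{1} + r})_x \Big\},
\end{align*}
and rewriting $v = \max\{A,B\}$ as $\min\{v - A,\ v - B\} = 0$ yields \eqref{qvi_infinite}.

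The main subtlety, beyond the two-stage discount bookkeeping above, is ensuring that the admissibility constraint defining $\Pi_{\mathrm{obs}}$ is correctly inherited by the DPP: when $i = 0$, the second-stage branch is not a free optimisation but is pinned to the previously applied action $a$. This is already encoded in the kernel $p_y$ via \eqref{eq:kernel_ocm_y_2}, so the inspection-stage Bellman equation naturally yields the two-sided alternative above. Uniqueness of solutions to \eqref{qvi_infinite} is a separate matter, deferred to the comparison principle for the abstract QVI \eqref{abstract_qvi_intro} established later in \Cref{section_comparison}.
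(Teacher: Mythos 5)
Your proposal is correct and is essentially the argument the paper intends: the paper states \Cref{prop:inf_dpp} without proof, implicitly appealing to the same two-stage dynamic programming decomposition used for the finite-horizon proposition, and your derivation reproduces that argument in stationary form, with the key bookkeeping points (discount-neutral half-steps so that $\gamma$ appears only on the integer transition, the admissibility constraint pinning the action to $a$ on the $i=0$ branch via \eqref{eq:kernel_ocm_y_2}, and relabelling the post-observation state so that $v^1$ enters the $i=1$ branch) all handled correctly. The boundedness remark justifying the stationary DPP and the deferral of uniqueness to \Cref{section_comparison} are likewise consistent with the paper.
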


Note that the QVI \eqref{qvi_infinite} is defined on the infinite domain $\N_{\geq 1} \times \mathcal{X} \times A$. In practice, we will have to truncate this domain for the time variable. A natural boundary condition is to enforce an inspection of the underlying chain after some large time $N$ has elapsed. This is equivalent to further restricting the admissible policies in $\Pi_{\mathrm{obs}}$ to those such that $i_N = 1$.

\subsection{Approximating the infinite horizon QVI by truncation}

Note that the QVI \eqref{qvi_infinite} is defined on the infinite domain $\N_{\geq 1} \times \mathcal{X} \times A$. In practice, we will have to truncate this domain for the time variable. A natural boundary condition is to enforce an inspection of the underlying chain after some large time $N$ has elapsed.

We can justify the truncation in the following way. For the original untruncated problem, we can write the reward functional and value function as
\begin{align*}
    J(\pi, p)  &= \E^{\pi}_{p}\left[ \sum^{\infty}_{n=0} \gamma^n \left( r(x_n, a_n) -  i_n \cdot c_{\mathrm{obs}} \right)\right],\quad \pi \in \Pi_{\mathrm{obs}},\ p \in \Delta_{\mathcal{X}},\\
    v(p)&= \sup_{\pi\in\Pi_{\mathrm{obs}}} J(\pi,p).
\end{align*}

For the truncated problem, enforcing a maximum time for inspection is equivalent to further restricting the set of admissible policies within $\Pi_{\mathrm{obs}}$. To this end, define $\Pi^N_{\mathrm{obs}} \subset \Pi_{\mathrm{obs}}$ such that any $\pi^N = (a^N, i^N) \in \Pi^N_{\mathrm{obs}}$ satisfies
\begin{align*}
    \PP( \min\{n > k: i^N_n = 1 \} \leq k + N \mid i^N_k = 1) = 1\quad \mbox{for all $k\geq 0$.}
\end{align*}

Then we can define the truncated value function
\begin{align*}
    v^N(p) \coloneqq \sup_{\pi \in \Pi^N_{\mathrm{obs}}} J(\pi, p).
\end{align*}

\begin{prop}
    The truncated value functions $v^N$ are monotonically increasing in $N$, and furthermore $v^N \uparrow v$ pointwise. %In particular, the truncated QVI approximates the infinite horizon QVI \eqref{qvi_infinite}.
\end{prop}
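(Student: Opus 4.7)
The plan is to establish the two assertions separately: monotonicity first (which is essentially immediate from set inclusion of policy classes), and then the harder pointwise convergence via an $\varepsilon$-optimal approximation argument that exploits the geometric decay from the discount factor.

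For monotonicity, I would observe that $\Pi^N_{\mathrm{obs}} \subseteq \Pi^{N+1}_{\mathrm{obs}} \subseteq \Pi_{\mathrm{obs}}$: any policy enforcing a maximum inter-inspection gap of $N$ \emph{a fortiori} satisfies the constraint for $N+1$, and is also an admissible OCM policy. Taking suprema of $J(\pi, p)$ over the nested classes gives $v^N(p) \le v^{N+1}(p) \le v(p)$ for every $p \in \Delta_{\mathcal X}$.

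For the pointwise convergence $v^N \to v$, I would set $M \coloneqq \max_{x,a} |r(x,a)| + c_{\mathrm{obs}}$, which is finite since $\mathcal X$ and $A$ are finite, and construct a truncation of an arbitrary $\pi \in \Pi_{\mathrm{obs}}$. Given the initial observation at time $0$ (as required by Remark~2.4), let $T_0 = 0 < T_1 < T_2 < \ldots$ be the inspection times under $\pi$. If every gap $T_j - T_{j-1}$ is already $\le N$, set $\pi^N = \pi$; otherwise, let $j^\star$ be the first index with $T_{j^\star} - T_{j^\star - 1} > N$, and define $\pi^N$ to coincide with $\pi$ up to time $T_{j^\star - 1} + N$, to force an inspection at that time, and thereafter to follow an arbitrary fixed default policy that inspects at every step (say with a fixed action $a_0 \in A$). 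By construction every gap in $\pi^N$ is at most $N$, so $\pi^N \in \Pi^N_{\mathrm{obs}}$.

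The key estimate is that $\pi$ and $\pi^N$ generate identical trajectories up to the stopping time $\tau_N \coloneqq T_{j^\star - 1} + N \ge N$, and after $\tau_N$ the per-step reward difference is bounded by $2M$ in absolute value. Hence
\begin{align*}
|J(\pi, p) - J(\pi^N, p)| \le \E^{\pi,\pi^N}_p \left[ \sum_{n \ge \tau_N} \gamma^n \cdot 2M \right] \le \frac{2M \gamma^N}{1 - \gamma},
\end{align*}
uniformly in $\pi$ and $p$. Since $\pi^N \in \Pi^N_{\mathrm{obs}}$, this gives $v^N(p) \ge J(\pi^N, p) \ge J(\pi, p) - 2M\gamma^N/(1-\gamma)$; taking the supremum over $\pi \in \Pi_{\mathrm{obs}}$ yields $v^N(p) \ge v(p) - 2M\gamma^N/(1-\gamma)$. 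Combined with monotonicity and $v^N \le v$, letting $N \to \infty$ gives $v^N(p) \uparrow v(p)$ pointwise. The main subtlety is the admissibility and measurability of $\pi^N$ as a history-dependent policy on the canonical space (so that $J(\pi^N, p)$ is well-defined under $\Pi^N_{\mathrm{obs}}$); this reduces to verifying that $\tau_N$ is a stopping time with respect to the observable filtration, which follows because the gap counter and the inspection indicators are both adapted.
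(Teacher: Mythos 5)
Your proof is correct, and it follows the same underlying idea as the paper --- modify a policy so that it satisfies the inspection-gap constraint, and control the damage via the geometric decay of the discounted tail --- but the execution differs in a way worth noting. The paper works with a maximising sequence $\{\pi_n\}$ for $v$, chooses for each $n$ a deterministic cutoff $N_n$ making the tail $\leq 2^{-n-1}$, takes $\pi^N_n$ to agree with $\pi_n$ up to time $N_n$, and passes to the limit along the subsequence $v^{N_n}$; this gives pointwise convergence but no rate, and it is somewhat loose about which truncated class the modified policy actually lands in (a policy agreeing with $\pi_n$ up to $N_n$ need not lie in $\Pi^N_{\mathrm{obs}}$ for $N < N_n$, only in $\Pi^{N_n}_{\mathrm{obs}}$, modulo forcing compliance afterwards). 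Your construction instead starts from an \emph{arbitrary} $\pi \in \Pi_{\mathrm{obs}}$ and truncates at the stopping time $\tau_N = T_{j^\star-1}+N$ of the first gap violation, which places $\pi^N$ in $\Pi^N_{\mathrm{obs}}$ for the same $N$ exactly, and since $\tau_N \geq N$ pathwise you obtain the uniform estimate $0 \leq v - v^N \leq 2M\gamma^N/(1-\gamma)$ --- i.e., \emph{uniform} geometric convergence, strictly stronger than the pointwise statement being proved. You are also more careful than the paper about adaptedness of the modification (that $\tau_N$ is a stopping time for the observable filtration, so $\pi^N$ is a legitimate history-dependent admissible policy). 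One minor remark: your assumption $T_0 = 0$ is not guaranteed by the paper's setup (the initial observation may lie at a previous time, and the constraint defining $\Pi^N_{\mathrm{obs}}$ only conditions on $i_k = 1$), but this is harmless --- defining $j^\star$ over the inter-inspection gaps after the first inspection, and noting a policy that never inspects is vacuously in $\Pi^N_{\mathrm{obs}}$, still yields $\tau_N \geq N$ and the same bound.
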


\begin{proof}
    The first part is clear as $\Pi^M_{\mathrm{obs}} \subset \Pi^N_{\mathrm{obs}} \subset \Pi_{\mathrm{obs}}$ for any $M<N$, so that $v^N$ is monotonically increasing and bounded above by $v$. Therefore $v^N$ converges to a limit $v^{*}$ and 
    \begin{align*}
        v^{*}(p) = \lim_{N\to\infty}v^N(p) \leq v(p).
    \end{align*}
    To show the reverse inequality, let $\{\pi_n\}\subset \Pi_{\mathrm{obs}}$ be a maximising sequence for $v$. As both $\mathcal{X}$ and $A$ are finite, for each $n >0$, we can choose $N_n \in \N$ such that
    \begin{align*}
        \left\lvert \sum^{\infty}_{n=N_n} \gamma^n (r(x,a) - c_{\mathrm{obs}})\right\rvert \leq 2^{-n-1}\ \mbox{ for all $x\in\mathcal{X}$ and $a \in A$.}
    \end{align*}
    Then, choose $\pi^N_n\in\Pi^N_{\mathrm{obs}}$ such that $\pi^N_n(k) = \pi_n(k)$ for all $k \leq N_n$, so that
    \begin{align*}
        J(\pi_n, p) &\leq J(\pi^N_n, p) + 2^{-n} \\
        &\leq v^{N_n}(y) + 2^{-n}.
    \end{align*}
    But since $\{\pi_n\}$ is a maximising sequence for $v$, and $N_n \to \infty$ by construction, letting $n\to\infty$ on both sides leads to
    \begin{align*}
        v(y) \leq \lim_{n\to\infty} v^{N_n}(y) = v^{*}(y),
    \end{align*}
    as required.
\end{proof}

\subsection{Parametrised time-inhomogeneous actions}\label{sub:inhomogeneous}

\sloppy 
Here, we show how the assumption of constant controls between observations can be relaxed to allow for
parametrised time-dependent actions.
Optimising over open-loop actions in general leads to an unbounded number of parametrising variables for the belief state $y$. For tractability, we consider instead a set of parametrised time-inhomogeneous open-loop actions. This can be incorporated into the previous framework as follows. Let $\Theta$ be a finite parameter set. Let $\{f_{\theta}\}_{\theta \in \Theta}$ be a parametrised set of open-loop actions, with $f_{\theta}: \N \to A$. Then we write the OCM POMDP as $\langle \mathcal{Y}, \mathcal{A}, p_y, r_y \rangle$ on the timescale $\frac{1}{2}\N$, but now with $\mathcal{A}$ being the union of $\Theta$ and $\mathcal{I}$, with admissible sets $\mathcal{A}(n) = \mathcal{I}$ and ${\mathcal{A}(n+\frac{1}{2}) = \Theta}$. Then the augmented transition kernel $p_y = (p_{y,t})_{t \in \frac{1}{2}\N}$ can be written, for $n \in \N$, $y = (k,x,\theta)$, $\hat{y} \in \mathcal{Y}$, $i \in \mathcal{I} = \{0, 1\}$, and $\theta^{\prime} \in \Theta$, as
    \begin{align*}
        p_{y,n}(\hat{y} \mid y , i) &= i \cdot p^{(n-k)}(\hat{x} \mid x, f_{\theta}) \mathbbm{1}_{\{\hat{y} = (n, \hat{x}, \varnothing) \}} + (1-i) \mathbbm{1}_{\{\hat{y}= y\}},\\
        p_{y, n + \frac{1}{2}}(\hat{y} \mid y, \theta^{\prime}) &= \mathbbm{1}_{\{\hat{y} = (k, x, \theta^{\prime}),\ \pr{a} = a\}},
    \end{align*}
    and for $y = (n,x,\varnothing)$,
    \begin{align*}
        p_{y, n + \frac{1}{2}}(\hat{y} \mid y, \theta^{\prime}) = \mathbbm{1}_{\{\hat{y} = (n, x, \theta^{\prime})\}},
    \end{align*}
where the term $p^{(n-k)}(\hat{x} \mid x, f_{\theta})$ represents the $(n-k)$-step transition kernel starting from $x \in \mathcal{X}$, applying the successive actions $f_{\theta}(1), \ldots, f_{\theta}(n-k)$. A similar replacement can be done with the augmented reward $r_y$. Then the QVI can be rewritten, for the infinite horizon case for example, as
\begin{align*}
    \min \bigg\{ & v^{n}_{\theta,x} - \gamma v^{n+1}_{\theta,x} - \Big(Q^{n}_{\theta}\ r_{f_{\theta}(n)}\Big)_x\ , \nonumber \\
    & v^{n}_{\theta,x} - \bigg( Q^{n}_{\theta}\ \overline{\gamma v^{1} + r}\bigg)_x  +  c_{\mathrm{obs}}   \bigg\} = 0, 
\end{align*}
with
$Q^n_\theta = \prod^n_{k=1} P_{f_{\theta}(k)}$.

\color{black}

\subsection{Toy problem}\label{toy_example}

To illustrate the framework, we present a model problem involving a two-state Markov chain and give an explicit solution. We assume the following setup:
\begin{itemize}
    \item the state space $\mathcal{X}=\{0,1\}$;
    \item the action space $A=\{0,1\}$;
    \item the reward function $r(x, a)= a \cdot x + (1-a)(a-x)$;
    \item the transition matrix
    \begin{equation*}
    P_{a}=\begin{blockarray}{ccc}
    0 & 1 \\
    \begin{block}{[cc]c}
    a + p(1-a) & (1-p)(1-a) & 0 \\
    (1-p) a & p \cdot a + (1-a) & 1 \\
    \end{block}
    \end{blockarray}
\end{equation*}
where $p\in(0,1)$.
\end{itemize}

This can be seen as a model for a maintenance problem, to find an optimal interval for inspecting equipment to avoid wear and tear over time. The reward function $r$ gives a reward of 1 when the state and action values are the same, and zero otherwise. If no changes are made to the action, the chain eventually arrives at the absorbing state which does not incur any reward. \Cref{toymarkov} illustrates the chain for the case $p=0.9$. 

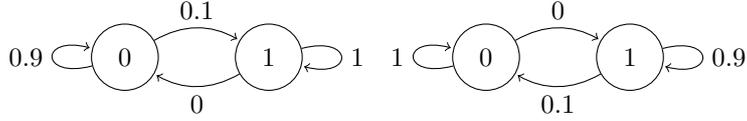
\begin{figure}[ht]
\centering
\begin{tikzpicture}
    \node[state] (0) {0};
    \node[state, right=of 0] (1) {1};
    \draw[every loop]
            (0) edge[bend left, auto=left] node {0.1} (1)
            (1) edge[bend left, auto=left] node {0} (0)
            (0) edge[loop left] node {0.9} (0)
            (1) edge[loop right] node {1} (1);
\end{tikzpicture}
\begin{tikzpicture}
    \node[state] (0) {0};
    \node[state, right=of 0] (1) {1};
    \draw[every loop]
            (0) edge[bend left, auto=left] node {0} (1)
            (1) edge[bend left, auto=left] node {0.1} (0)
            (0) edge[loop left] node {1} (0)
            (1) edge[loop right] node {0.9} (1);
\end{tikzpicture}
\caption{Illustration of the two-state Markov chain. Left: $a=0$; Right: $a=1$.}
\label{toymarkov}
\end{figure}

Consider the infinite horizon problem. The QVI for $x=0$ and $a=0$ is
\begin{align}\label{toy_qvi}
    \min \bigg\{ & v^{n}_{0,0} - \gamma v^{n+1}_{0,0} - \Big(P^{n}_0\ r_0\Big)_x\ , \nonumber \\
    & v^{n}_{0,0} - \left( P^{n}_0 \overline{\gamma v^{1} +r}\right)_x  +  c_{\mathrm{obs}}   \bigg\} = 0.
\end{align}
Due to the symmetry of the problem, we have $v^n_{0,0} = v^n_{1,1}$. Moreover, it is clear that given the knowledge of $x_n$, the optimal action is to set ${a_n = x_n}$. Hence we can write \eqref{toy_qvi} as
\begin{align*}
\min \{ v^n_{0,0} - \gamma v^{n+1}_{0,0} - p^n,\  v^n_{0,0} - \gamma v^1_{0,0} -1 + c_{\mathrm{obs}} \} = 0
\end{align*}
or simply by writing $v(n) = v^n_{0,0}$:
\begin{equation*}
    v(n) = \max \{ p^{n} + \gamma  v(n+1),\  1 -c_{\mathrm{obs}} + \gamma  v(1) \}.
\end{equation*}
Let $T$ be the first optimal inspection time (where by convention $T=\infty$ if it is optimal to never inspect). The value function is given recursively by
\begin{equation}\label{inf_sol}
    v(n) = \begin{cases}
        1 -c_{\mathrm{obs}}+\gamma v(1),& \quad \mbox{if }n\geq T; \\
        p^n + \gamma v(n+1),& \quad \mbox{otherwise}.
    \end{cases}
\end{equation}

Solving the above for $v(1)$, we obtain the explicit solution
\begin{equation}\label{inf_sol_1}
    v(1) =\max{\left\{\sup_{m\geq 2}{\left(\frac{p \sum^{m-2}_{k=0} (\gamma p)^k + \gamma^{m-1}(1-c_{\mathrm{obs}})}{1-\gamma^m}\right)},\frac{1-c_{\mathrm{obs}}}{1-\gamma}\right\}},
\end{equation}
from which $v(n)$ for $n\geq1$ can be calculated from \eqref{inf_sol}. The first term in \eqref{inf_sol_1} is a geometric series, where $p \sum^{m-2}_{k=0} (\gamma p)^k + \gamma^{m-1}(1-c_{\mathrm{obs}})$ is the expected returns across the optimal inspection interval, and $\gamma^{m}$ is the discount factor over the whole interval. We can interpret \eqref{inf_sol_1} as searching for the optimal inspection interval to maximise the sum of the rewards, minus the observation cost. 

\section{Comparison principle and penalisation}\label{section_comparison}

In this section, we consider the well-posedness of solutions to the QVI for the OCM. We establish uniqueness by proving a comparison principle, and existence by constructing arbitrarily close approximations via truncation and penalisation. To this end, we shall recast the QVIs in \Cref{section_mc_model} in a more abstract form.

\begin{prob}\label{prob:original_qvi}
\sloppy Find a function $u: \N \times \mathcal{X} \to \R^{A}$, with $u_a(\cdot,\cdot)$ as its components, satisfying
\begin{align}\label{abstract_qvi_infinite}
    \min \left\{ F_a(n,x,u(n,x),u_a),\ u_a(n,x) - \mathcal{M} u(n,x) \right\}=0, \quad  n\in\N,\ x \in \mathcal{X},\ a \in A,
\end{align}
where $F_a: \N \times \mathcal{X} \times \R^{A} \times \R^{\N\times \mathcal{X}} \to \R$ satisfies \Cref{ass_monotonicity}, and ${\mathcal{M}u: \N \times \mathcal{X} \to \R}$ is defined for all
$(n,x) \in \N \times \mathcal{X}$ by 
    \begin{align}\label{obstacle operator}
        \mathcal{M}u(n,x) = \left(Q_n \overline{u^{1} - c}\right)_x ,\quad \left(\overline{u^{1} - c}\right)_x = \max_{a \in A} \left(u^1_{a,x} - c_{a,x}\right),
    \end{align}
for a given vector $c \in \R^{\mathcal{X} \times A}$, and $\{Q_n\} \subset \mathbb{R}^{\mathcal{X}\times \mathcal{X}}$ is a sequence of substochastic matrices, i.e., with non negative row sums bounded by 1. 
\end{prob}

\begin{ass}\label{ass_monotonicity}
The functions $\{F_a\}_{a \in A}$ in Problem \ref{prob:original_qvi} satisfy:
    \begin{enumerate}
        \item There exists $\beta > 0$ such that for any $n \in \N$, $x \in \mathcal{X}$, bounded functions $u, v: \N \times \mathcal{X} \to \R^A$ such that $u \leq v$, and $r, s \in \R^A$ such that $\theta \coloneqq r_a - s_a = \max_{j \in A} (r_j - s_j) \geq 0$, then 
        \begin{align*}
            F_a(n, x, r, u_a + \theta) - F_a(n, x, s, v_a) \geq \beta \theta.
        \end{align*}
        \item For any $n \in N$, $a\in A$, $r \in \R^A$ and continuous bounded function $\phi: \N \times \mathcal{X} \to \R$, the functions $x \mapsto F_a(n, x, r, \phi)$ are continuous and bounded. Moreover, the functions $r \mapsto F_a(n,x, r, \phi)$ are uniformly continuous for bounded $r$, uniformly with respect to $(n, x)\in \N \times \mathcal{X}$.
    \end{enumerate}
\end{ass}

\sloppy \Cref{ass_monotonicity} are monotonicity and regularity assumptions typically used in viscosity solution theory \cite{monotone_system_convergence}, which applies to standard approximation schemes, such as finite differences. The monotonicity condition arises naturally, for example, from the discretisation of QVIs in continuous time involving switching controls. The operator $\mathcal{M}$, which shall be referred to as the \textit{inspection operator}, is a non-local term in the QVI that couples the solution $u$ across the different action values. In general, the vector $c$ in \eqref{obstacle operator} can also depend on $n$, and the proofs in this section extends naturally to that case.

We now prove a comparison principle for the QVI \eqref{abstract_qvi_infinite}, extending a similar result in \cite{Reisinger_Zhang_penalty} for finite QVIs and with the identity matrix in place of $Q_n$. This establishes the uniqueness of solutions to \Cref{prob:original_qvi}.

\begin{prop}\label{comparision_thm_infinite}
Given \Cref{ass_monotonicity}, suppose $u: \N \times \mathcal{X} \to \R^A$ (resp. $v$) is bounded and satisfies for $c>0$,
    \begin{align*}
         \min \left\{ F_a(n,x,u(n,x),u_a),\ u_a(n,x) - \mathcal{M} u(n,x) \right\}\leq 0 \quad \mbox{(resp. $\geq 0$)}.
    \end{align*}
    Then we have $u \leq v$.
\end{prop}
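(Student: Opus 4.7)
My plan is a proof by contradiction: assume $M:=\sup_{n,x,a}(u_a(n,x)-v_a(n,x))>0$ and test the sub- and super-solution inequalities at a near-maximiser. Since $\mathcal{X}\times A$ is finite but $\N$ is not, the supremum need not be attained. Pick $\tilde a(n,x)\in\argmax_a(u_a-v_a)(n,x)$, choose $(n_k,x_k)$ with $\theta_k:=(u_{\tilde a(n_k,x_k)}-v_{\tilde a(n_k,x_k)})(n_k,x_k)\to M$, and by passing to a subsequence assume both that $(x_k,\tilde a(n_k,x_k))=(x^\star,a^\star)$ is constant and (by pigeonhole) that at every $(n_k,x^\star,a^\star)$ a single branch of the sub-inequality $\min\{F_{a^\star},u_{a^\star}-\mathcal{M}u\}\le 0$ is active.

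In the sub-$F$ case, $F_{a^\star}(n_k,x^\star,u(n_k,x^\star),u_{a^\star})\le 0$. I invoke \Cref{ass_monotonicity}(1) with $\phi:=u-M$ (admissible since $u-v\le M$), $\psi:=v$, $\theta:=M$, and the perturbed vector $r^k:=u(n_k,x^\star)+(M-\theta_k)e_{a^\star}$, tuned so that $r^k_{a^\star}-v_{a^\star}(n_k,x^\star)=M=\max_j(r^k_j-v_j(n_k,x^\star))$. This yields
\begin{align*}
F_{a^\star}(n_k,x^\star,r^k,u_{a^\star})-F_{a^\star}(n_k,x^\star,v(n_k,x^\star),v_{a^\star})\ge\beta M.
\end{align*}
Since $|r^k-u(n_k,x^\star)|=M-\theta_k\to 0$, the uniform continuity of $F$ in $r$ (\Cref{ass_monotonicity}(2)) together with the super-inequality $F_{a^\star}\ge 0$ forces $\beta M\le 0$ in the limit, a contradiction.

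In the sub-obstacle case, $u_{a^\star}(n_k,x^\star)\le\mathcal{M}u(n_k,x^\star)$ and $v_{a^\star}(n_k,x^\star)\ge\mathcal{M}v(n_k,x^\star)$ together give, using $\overline{u^1-c}(y)-\overline{v^1-c}(y)\le\max_a(u^1_a-v^1_a)(y)$ and substochasticity of $Q_{n_k}$, that $\theta_k\le M_1:=\max_{y,a}(u^1_a-v^1_a)(y)\le M$. Letting $k\to\infty$ forces $M=M_1$, so the supremum is attained on the finite slice $\{1\}\times\mathcal{X}\times A$ at some $(1,x_0,a_0)$ with $a_0\in\argmax_a(u^1_a-v^1_a)(x_0)$. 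If the sub-$F$ branch holds at this point, the clean $\theta=M$ version of the previous paragraph (no continuity slack) gives an immediate contradiction. Otherwise the sub-obstacle branch holds and equality throughout $M\le\mathcal{M}u(1,x_0)-\mathcal{M}v(1,x_0)\le M$ forces (i) $\sum_y(Q_1)_{x_0,y}=1$, (ii) every $y$ with $(Q_1)_{x_0,y}>0$ attains $\max_a(u^1_a-v^1_a)(y)=M$ at some $a^\star(y)$ that is simultaneously an argmax of both $u^1_a-c_{a,y}$ and $v^1_a-c_{a,y}$, and (iii) $u^1_{a_0}(x_0)=\mathcal{M}u(1,x_0)$.

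Iterating this analysis along the forward $Q_1$-orbit of $x_0$ produces a finite $Q_1$-invariant set $\tilde S\subseteq\mathcal{X}$ together with actions $\{a^\star(y)\}_{y\in\tilde S}$ for which $u^1_{a^\star(y)}(y)-v^1_{a^\star(y)}(y)=M$ and sub-obstacle is active at every $(1,y,a^\star(y))$; at the first point where the sub-$F$ branch would hold instead, the previous $F$-branch argument closes the proof. Setting $\hat U(y):=u^1_{a^\star(y)}(y)$ and combining $\hat U(y)=\mathcal{M}u(1,y)$ with $\overline{u^1-c}(y')=\hat U(y')-c_{a^\star(y'),y'}$ on $\tilde S$,
\begin{align*}
\hat U(y)=\sum_{y'\in\tilde S}(Q_1)_{y,y'}\bigl(\hat U(y')-c_{a^\star(y'),y'}\bigr)\le\max_{y'\in\tilde S}\hat U(y')-\delta,
\end{align*}
where $\delta:=\min_{a,x}c_{a,x}>0$. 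Evaluating at $y^\star\in\argmax_{\tilde S}\hat U$ produces $\hat U(y^\star)\le\hat U(y^\star)-\delta$, the sought contradiction. The hard part is precisely this last step: the obstacle inequality for $\mathcal{M}$ chains indefinitely under $Q_1$ with no decay from substochasticity alone, so breaking the chain genuinely requires the strict positivity $c>0$; the unboundedness of $\N$ is a milder complication absorbed by the $r$-continuity slack in the $F$-branch.
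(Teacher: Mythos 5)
Your proof is correct and takes essentially the same route as the paper's: the same dichotomy over which branch of the subsolution inequality is active along a maximising sequence, the same use of the monotonicity condition in \Cref{ass_monotonicity} combined with the modulus-of-continuity slack $\omega(M-\theta_k)$ in the $F$-branch (you perturb $r$ where the paper perturbs $s$, which is immaterial), and the same reduction of the supremum to the finite slice $\{1\}\times\mathcal{X}\times A$ via the inspection operator, followed by a contradiction driven by the strict positivity of $c$. The only difference is the closing device in the obstacle branch --- the paper chains successive distinct maximisers $(b_j,y_j)$ losing at least $c$ per step and invokes the pigeonhole principle to force a cycle, whereas you pin the row sums of $Q_1$ to one, extract the invariant set $\tilde S$ with shared argmax actions, and evaluate the fixed-point identity at $\argmax_{\tilde S}\hat U$ --- a discrete maximum-principle rendering of the same mechanism, which in fact makes explicit some equality-tracking the paper leaves implicit.
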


\begin{proof}
    We write $u^n_{a,x}$ for $u_a(n,x)$, and let $M = \sup_{n,x,a}(u^n_{a,x}-v^n_{a,x})$. Let $(n_k, x_k, a_k)$ be a maximising sequence, with $M_k = u^{n_k}_{a_k, x_k} - v^{n_k}_{a_k, x_k}$. Suppose that along the whole sequence, $u_{a_k} - \mathcal{M}u \leq 0$. As $v$ is a supersolution, we also have that $v_{a_k} - \mathcal{M}v \geq 0$. Therefore,
    \begin{align*}
        u^{n_k}_{a_k, x_k} - v^{n_k}_{a_k, x_k} &\leq \left(Q_{n_k} \overline{u^{1} - c}\right)_{x_k} - \left(Q_{n_k} \overline{v^{1} - c}\right)_{x_k}&  \\
    & \leq \left(Q_{n_k} \overline{u^{1} - v^1}\right)_{x_k} &\\
    & \leq \left(\overline{u^{1} - v^1}\right)_{x^{*}}\quad\quad &\mbox{(for some $x^{*} \in \mathcal{X}$)}.
    \end{align*}
    As the RHS is independent of $k$, taking the supremum on the LHS leads to
    \begin{align*}
        M \leq  \left(\overline{u^{1} - v^1}\right)_{x^{*}} \leq \max_{a\in A}\ (u^1_{a, x^{*}} - v^1_{a, x^{*}}).
    \end{align*}
    As $A$ is finite, the supremum for $M$ is achieved at some $(1, b_1, y_1)$, $b_1 \in A$, $y_1 \in \mathcal{X}$. Applying the same argument as above, we have
    \begin{align*}
         u^{1}_{b_1, y_1} - v^{1}_{b_1, y_1} \leq -c \leq u^1_{b_2, y_2} - v^1_{b_2,y_2},
    \end{align*}
    for some $(b_2, y_2) \neq (b_1, y_1)$, since by assumption $c \geq 0$. Repeating the same argument leads to successive different indices $(b_i, y_i)$ such that
    \begin{align}\label{eq:contradiction_comparison}
        u^1_{b_1, y_1} - u^1_{b_n, y_n} \leq \sum^{n-1}_{j=1} \left(u^1_{b_j, y_j} - u^1_{b_{j+1}, y_{j+1}}\right) \leq -(n-1) c < 0.
    \end{align}
    But since both $\mathcal{X}$ and $A$ are finite, by the pigeonhole principle, we must eventually have some $n$ such that $(b_1, y_1)= (b_n, y_n)$, which contradicts $\eqref{eq:contradiction_comparison}$. Therefore, we must have some $(n_k, x_k, a_k)$ such that
    \begin{align}\label{monotonicity_contradict}
        F_{a_k}(n_k, x_k, u(n_k,x_k), u_{a_k}) - F_{a_k}(n_k, x_k, v(n_k,x_k), v_{a_k}) \leq 0.
    \end{align}
    Now by construction, $u_{a_k} \leq v_{a_k} +M$. Hence, by taking $r=u(n_k,x_k)$, $s=v(n_k, x_k) +M_k$ and $\theta \coloneqq \max_{a \in A}(r_a - s_a) = 0$ in the monotonicity condition in \Cref{ass_monotonicity}, we have
    \begin{align*}
        F_{a_k}(n_k, x_k, v(n_k,x_k) + M_k, v_{a_k} +M) \leq F_{a_k}(n_k, x_k, u(n_k,x_k), u_{a_k}).
    \end{align*}
    Next, as $r \mapsto F_a(n,x, r, \phi)$ is uniformly continuous, it admits a modulus of continuity, which we denote by $\omega(\vert r-s\vert)$, so that we have
    \begin{align*}
        F_{a_k}(n_k, x_k, v(n_k,x_k) + M, v_{a_k} +M) - \omega(M-M_k) \leq F_{a_k}(n_k, x_k, v(n_k,x_k) + M_k, v_{a_k} +M).
    \end{align*}
    Combining the above with \eqref{monotonicity_contradict}, we arrive at
    \begin{align*}
         \beta M - \omega(M-M_k) \leq F_{a_k}(n_k, x_k, u(n_k,x_k), u_{a_k}) - F_{a_k}(n_k, x_k, v(n_k,x_k), v_{a_k}) \leq 0.
    \end{align*}
    Then, as $k \to \infty$, we arrive at a contradiction, since the modulus of continuity $\omega \to 0$ and $\beta M>0$ by assumption.
\end{proof}

\begin{cor}
The solutions to the QVI for the OCM infinite horizon problem \eqref{qvi_infinite} are unique.
\end{cor}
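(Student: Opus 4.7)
The strategy is to recast the QVI \eqref{qvi_infinite} in the abstract form \eqref{abstract_qvi_infinite} and invoke Proposition \ref{comparision_thm_infinite}. For the first branch of the $\min$ in \eqref{qvi_infinite}, I would define
\begin{align*}
    F_a(n, x, R, \phi) := R_a - \gamma\,\phi(n+1, x) - (P^n_a\, r_a)_x,
\end{align*}
where with the identifications $R = v(n,x) \in \R^A$ and $\phi = v_a : \N \times \mathcal{X} \to \R$, the equation $F_a(n,x,R,\phi) = 0$ reproduces the first branch. To verify Assumption \ref{ass_monotonicity}(1), a direct computation with $u \leq v$ and hypothetical vectors $r^\prime, s^\prime \in \R^A$ satisfying $\theta = r_a^\prime - s_a^\prime = \max_j(r_j^\prime - s_j^\prime) \geq 0$ yields
\begin{align*}
    F_a(n,x,r^\prime, u_a + \theta) - F_a(n,x,s^\prime, v_a) = (1-\gamma)\theta - \gamma\bigl(u_a(n+1, x) - v_a(n+1, x)\bigr) \geq (1-\gamma)\theta,
\end{align*}
so monotonicity holds with $\beta = 1 - \gamma > 0$. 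Assumption \ref{ass_monotonicity}(2) is immediate since $F_a$ is affine in $R$ and the rewards are bounded on the finite set $\mathcal{X}\times A$.

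For the second branch, I would set $c_{a,x} := (c_{\mathrm{obs}} - r_{a,x})/\gamma$ and $Q_n^a := \gamma P^n_a$. Since $\gamma \in (0,1)$ and $P^n_a$ is stochastic, $Q_n^a$ is substochastic with row sum $\gamma$. Using $(P^n_a \mathbf{1})_x = 1$, a short calculation gives
\begin{align*}
    (Q_n^a\,\overline{u^1 - c})_x = (P^n_a\,\overline{\gamma u^1 + r})_x - c_{\mathrm{obs}},
\end{align*}
which matches the second branch of \eqref{qvi_infinite} exactly. The formal subtlety is that $Q_n^a$ now depends on $a$, whereas $Q_n$ in Problem \ref{prob:original_qvi} does not. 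This is a cosmetic extension: the proof of Proposition \ref{comparision_thm_infinite} applies the subsolution and supersolution inequalities at a common maximising index $a_k$, and uses the matrix only through substochasticity and non-negativity of entries. Both properties hold for $Q_n^{a_k}$, so the comparison argument carries over verbatim.

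Boundedness of the reward and observation cost together with $\gamma \in (0,1)$ ensure that any solution of \eqref{qvi_infinite} is bounded. Two solutions thus serve simultaneously as bounded sub- and supersolutions, and the (extended) Proposition \ref{comparision_thm_infinite} applied both ways forces them to coincide. The only conceptual hurdle is precisely the need to allow $a$-dependence of $Q_n$ in the abstract setting; once one observes that this does not affect the comparison proof, the corollary follows immediately.
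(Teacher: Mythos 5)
Your proposal is correct and follows essentially the same route as the paper: cast \eqref{qvi_infinite} into the abstract form of Problem~\ref{prob:original_qvi} with $F_a(n,x,R,\phi) = R_a - \gamma\,\phi(n+1,x) - (P^n_a r_a)_x$ and $Q_n = \gamma P^n_a$, verify Assumption~\ref{ass_monotonicity} with $\beta = 1-\gamma$, and apply the comparison principle of Proposition~\ref{comparision_thm_infinite} in both directions. You are in fact slightly more careful than the paper on two points it glosses over: your $c_{a,x} = (c_{\mathrm{obs}} - r_{a,x})/\gamma$ is the algebraically exact choice (the paper's $c_{a,x} = c_{\mathrm{obs}} - r_{a,x}/\gamma$ reproduces the obstacle term only up to a factor of $\gamma$ on $c_{\mathrm{obs}}$), and your explicit observation that the $a$-dependence of $Q_n = \gamma P^n_a$ is harmless --- because the comparison argument applies the sub- and supersolution inequalities at a single common index $a_k$ and uses the matrix only through non-negativity of entries and substochasticity --- resolves a mismatch with the statement of Problem~\ref{prob:original_qvi} that the paper leaves implicit (your only unsubstantiated aside is the claim that \emph{every} solution of \eqref{qvi_infinite} is bounded, but since Proposition~\ref{comparision_thm_infinite} is stated for bounded functions, uniqueness within that class is all that is asserted, matching the paper).
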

\begin{proof}
Clearly \eqref{qvi_infinite} can be cast in the form of \eqref{abstract_qvi_infinite}, given by
    \begin{align*}
        F_a(n,x,u,\phi) & = u - \gamma \phi(n+1,x) - \left( P^n_a r_a \right)_x, \\
        Q_n & = \gamma P^n_a,\\
        c_{a,x} &= c_{\mathrm{obs}} - \frac{1}{\gamma} r_{a,x}.
    \end{align*}
    Moreover, $F_a$ satisfies \Cref{ass_monotonicity} with $\beta = 1 - \gamma$. Since a solution to \eqref{abstract_qvi_infinite} is both a subsolution and supersolution, it follows from \Cref{comparision_thm_infinite} that any solution must be unique.
\end{proof}

As alluded to in \Cref{section_mc_model}, in order to compute the solutions to the QVI \eqref{abstract_qvi_infinite}, we have to resort to computing a finite QVI, truncated at some large time. Here we shall also show the comparison principle for the truncated QVI. Let $N$ be the truncated time, $\lvert \mathcal{X} \rvert = L$, $\lvert A \rvert =d$. Then a function $u: \{1,\ldots, N\} \times \mathcal{X} \to \R^A$ can be considered as an element of $\R^{N\times L \times d}$. Therefore, given $F_a$ in \Cref{prob:original_qvi}, consider its restriction to $\{1,\ldots, N\}\times\mathcal{X}\times\R\times \R^{N\times L \times d}$, and write
\begin{align*}
    F_a(n,x,u(n,x), u_a) \eqqcolon \tilde{F}_a(u)^n_x,
\end{align*}
i.e., we consider $\{F_a\}_{a \in A}$ as functions from $\R^{N \times L \times d}$ to $\R^{N \times L}$. The monotonicity condition can then be reduced to the form below.
\begin{lemma}
     \sloppy $\tilde{F}_a: \R^{N \times L \times d} \to \R^{N \times L}$ is a continuous function that satisfies the following property: there exists a constant $\beta > 0$ such that for any $u,v\in \R^{N \times L \times d}$ with ${u^{\bar{n}}_{\bar{a},\bar{l}} - v^{\bar{n}}_{\bar{a},\bar{l}} = \max_{n,a,l} (u^n_{a,l} - v^n_{a,l}) \geq 0}$, we have
\begin{equation}\label{ass_finite_monotonicity}
    \tilde{F}_{\bar{a}}(u)^{\bar{n}}_{\bar{l}} - \tilde{F}_{\bar{a}}(v)^{\bar{n}}_{\bar{l}} \geq \beta (u^{\bar{n}}_{\bar{a},\bar{l}} -v^{\bar{n}}_{\bar{a},\bar{l}}).
\end{equation}
\end{lemma}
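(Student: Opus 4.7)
The plan is to unpack the finite-dimensional statement into the functional statement of \Cref{ass_monotonicity}(1) via a simple shift, and then read off continuity from \Cref{ass_monotonicity}(2). The main work is choosing the right extension and shift so that the hypothesis $u\le v$ of the assumption is satisfied, even though in the lemma $u$ and $v$ are only related at a single point $(\bar n,\bar a,\bar l)$.

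Given $u,v\in\R^{N\times L\times d}$, let $\theta = u^{\bar n}_{\bar a,\bar l}-v^{\bar n}_{\bar a,\bar l}=\max_{n,a,l}(u^n_{a,l}-v^n_{a,l})\ge 0$. First I would extend $u$ and $v$ to bounded functions on $\N\times\mathcal{X}\to\R^A$ (e.g.\ by setting them equal to zero outside $\{1,\dots,N\}$), still denoted $u,v$; the maximality of $\theta$ on the original finite domain together with $\theta\ge 0$ guarantees $u^n_{a,l}-\theta\le v^n_{a,l}$ for every $(n,a,l)$, so the shifted function $\tilde u:=u-\theta$ satisfies $\tilde u\le v$ pointwise, which is precisely the hypothesis of \Cref{ass_monotonicity}(1).

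Next I would pick the vector arguments $r=u(\bar n,\bar l)\in\R^A$ and $s=v(\bar n,\bar l)\in\R^A$. By construction $r_{\bar a}-s_{\bar a}=\theta$ and, again by the global maximality of $\theta$, $r_j-s_j=u^{\bar n}_{j,\bar l}-v^{\bar n}_{j,\bar l}\le\theta$ for every $j\in A$, so $\theta=\max_{j\in A}(r_j-s_j)$. Applying \Cref{ass_monotonicity}(1) to $(\tilde u, v, r, s)$ at the point $(\bar n,\bar l,\bar a)$ yields
\begin{equation*}
F_{\bar a}\bigl(\bar n,\bar l, r, \tilde u_{\bar a}+\theta\bigr)-F_{\bar a}\bigl(\bar n,\bar l, s, v_{\bar a}\bigr)\ge \beta\theta.
\end{equation*}
Since $\tilde u_{\bar a}+\theta=u_{\bar a}$, the left-hand side is exactly $\tilde F_{\bar a}(u)^{\bar n}_{\bar l}-\tilde F_{\bar a}(v)^{\bar n}_{\bar l}$, giving \eqref{ass_finite_monotonicity}.

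Finally, for continuity of $\tilde F_a$, note that on the finite domain $\{1,\dots,N\}\times\mathcal{X}$ the function-valued argument $u_a$ reduces to a vector in $\R^{N\times L}$, and $\tilde F_a(u)^n_x=F_a(n,x,u(n,x),u_a)$ depends continuously on $u$ by the uniform continuity of $r\mapsto F_a(n,x,r,\phi)$ and the obvious continuous dependence on $\phi$ in finite dimensions from \Cref{ass_monotonicity}(2). The main obstacle --- if any --- is just being careful that the maximality of $\theta$ over the \emph{whole} index set $(n,a,l)$ is what licenses both the global inequality $\tilde u\le v$ and the coordinate-wise inequality $\max_{j\in A}(r_j-s_j)=\theta$; everything else is a direct translation of notation.
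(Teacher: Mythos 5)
Your proof is correct and follows essentially the same route as the paper's: shift $u$ by $\theta$ so that $\tilde u \coloneqq u-\theta \le v$, apply \Cref{ass_monotonicity}(1) with $r=u(\bar n,\bar l)$, $s=v(\bar n,\bar l)$ at the maximising index, and use $\tilde u_{\bar a}+\theta = u_{\bar a}$ to recover \eqref{ass_finite_monotonicity}. You are in fact slightly more careful than the paper, which leaves implicit both the extension of $u,v$ to bounded functions on $\N\times\mathcal{X}$ and the verification that $\theta=\max_{j\in A}(r_j-s_j)$, and which likewise does not spell out the continuity claim in any more detail than you do.
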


\begin{proof}
    For the first part, let $u, v \in \R^{N \times L \times d}$ with $
    \theta \coloneqq {u^{\bar{n}}_{\bar{a},\bar{l}} - v^{\bar{n}}_{\bar{a},\bar{l}} = \max_{n,a,l} (u^n_{a,l} - v^n_{a,l}) \geq 0}$. Then by definition of $\theta$, $\tilde{u} \coloneqq u - \theta\leq v $. Applying \Cref{ass_monotonicity} with $r = u(\bar{n}, \bar{l})$, $s = v(\bar{n}, \bar{l})$ and functions $\tilde{u}, v$ lead to
    \begin{align}\label{mono_forward}
        F_{\bar{a}}(\bar{n},\bar{l}, u(\bar{n},\bar{l}), \tilde{u}_{\bar{a}} + \theta) - F_{\bar{a}}(\bar{n},\bar{l}, v(\bar{n},\bar{l}), v_{\bar{a}}) \geq \beta \theta.
    \end{align}
    Noting that $\tilde{u}_{\bar{a}} + \theta = u_{\bar{a}}$, it then follows from \eqref{mono_forward} that $\tilde{F}_a(u)^n_x = F_a(n,x,u(n,x), u_a)$ satisfies \eqref{ass_finite_monotonicity} as required. 
\end{proof}

Thus, the finite truncated QVI can be summarised by the problem below.
\color{black}
\begin{prob}\label{prob:QVI}
Find $ u = (u_1, \ldots, u_d) \in \R^{N \times L \times d}$ such that 
\begin{align}\label{discrete_qvi}
    \min \left\{ \tilde{F}_a(u), u_a - \mathcal{M} u \right\}=0, \quad a \in \{1,\ldots , d\} \eqqcolon A,
\end{align}
with time boundary condition
\begin{align*}
    u_a(N, \cdot) = \mathcal{M}u(N, \cdot),
\end{align*}
where $\tilde{F}_a: \R^{N \times L \times d} \to \R^{N \times L}$ satisfies \eqref{ass_finite_monotonicity}, and $\mathcal{M}: \R^{N \times L \times d} \to \R^{N \times L}$ is defined by 
        \begin{align*}
            (\mathcal{M} u)^n_l = \left(Q_n \overline{u^{1} - c}\right)_l ,\quad \left(\overline{u^{1} - c}\right)_l = \max_{a \in A} \left(u^1_{a,l} - c_{a,l}\right),
        \end{align*}
        for a given vector $c \in \R^{L \times d}$ and $\{Q_n\} \subset \mathbb{R}^{L\times L}$ is a sequence of substochastic matrices.
\end{prob}

\begin{rem}
    For the case with parameter uncertainty, if the measures $\rho_n(d \theta)$ can be parametrised by a finite number of parameters $w$, this can also be considered as part of the spatial domain. In this case $L = \lvert \mathcal{X} \rvert \cdot \lvert w \rvert$. 
\end{rem}

When the domain is finite as above, and in the case where the $Q_n$'s are the identity matrix, then \eqref{discrete_qvi} reduces to a QVI with interconnected obstacles, see \cite{Reisinger_Zhang_penalty} for a more detailed analysis for such classes of QVIs. Naturally, the truncated QVI also satisfies a comparison principle:

\begin{prop} \label{comparison_qvi}
Suppose $u=(u_a)_{a \in A}$ (resp. $v=(v_a)_{a \in A}$) satisfies for $c>0$:
    \begin{equation*}
        \min \left\{ \tilde{F}_a(u),\ u_a - \mathcal{M}u \right\} \leq 0 \quad (\mathrm{resp.} \geq 0),\quad a \in A;
    \end{equation*}
then $u \leq v$.
\end{prop}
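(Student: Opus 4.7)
The plan is to adapt the argument of \Cref{comparision_thm_infinite} to this finite setting: since the domain $\{1,\ldots,N\}\times\mathcal{X}\times A$ is finite, the supremum $M := \max_{n,a,x}(u^n_{a,x} - v^n_{a,x})$ is actually attained at some $(\bar n, \bar a, \bar x)$, which lets one replace the modulus-of-continuity step with a direct application of the sharpened monotonicity condition \eqref{ass_finite_monotonicity}. I would argue by contradiction, assuming $M > 0$. The subsolution inequality at $(\bar n, \bar a, \bar x)$ forces one of two alternatives: either (i) $\tilde F_{\bar a}(u)^{\bar n}_{\bar x} \leq 0$, or (ii) $u_{\bar a}(\bar n,\bar x) - \mathcal{M}u(\bar n,\bar x) \leq 0$.

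In case (i), I combine with $\tilde F_{\bar a}(v)^{\bar n}_{\bar x}\geq 0$ from the supersolution property. Since $(\bar n, \bar a, \bar x)$ realises the global maximum of $u - v$, the monotonicity condition \eqref{ass_finite_monotonicity} applied to $u$ and $v$ yields $\tilde F_{\bar a}(u)^{\bar n}_{\bar x} - \tilde F_{\bar a}(v)^{\bar n}_{\bar x} \geq \beta M > 0$, contradicting that this difference is non-positive. In case (ii), combining with $v_{\bar a}(\bar n,\bar x)\geq \mathcal{M}v(\bar n,\bar x)$, substochasticity of $Q_{\bar n}$, and the elementary estimate $\max_a f_a - \max_a g_a \leq \max_a(f_a - g_a)$, one obtains
\[
M \leq (\mathcal{M}u - \mathcal{M}v)(\bar n,\bar x) \leq \max_{x'}\left[(\overline{u^1-c})_{x'} - (\overline{v^1-c})_{x'}\right] \leq \max_{a,x'}(u^1_{a,x'} - v^1_{a,x'}) = M,
\]
so equality must hold throughout and $M$ is also attained at some time-$1$ index $(1, b_1, y_1)$ with $b_1 = \argmax_a(u^1_{a,y_1} - c_{a,y_1})$. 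I then reapply the case-(i)/(ii) dichotomy at $(1, b_1, y_1)$: case (i) gives an immediate contradiction as before, while case (ii) together with the strict positivity of $c$ produces a further maximising index $(1, b_2, y_2)$ with $(b_2, y_2) \neq (b_1, y_1)$ and $u^1_{b_1,y_1} - u^1_{b_2,y_2} \leq -c_{b_2,y_2} < 0$, in the manner of \eqref{eq:contradiction_comparison}. Since $A\times\mathcal{X}$ is finite, the pigeonhole principle rules out an infinite chain of distinct indices with strictly increasing $u^1$ values, so case (i) must trigger at some iterate and yield the contradiction, giving $u \leq v$.

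The main obstacle is case (ii): managing the non-local coupling through $\mathcal{M}$ requires careful tracking of which indices attain the maximum after one application of the inspection operator, and verifying that the positivity of $c$ forces the index chain to remain strictly distinct (rather than stalling at the same $(b,y)$) so that the pigeonhole finish applies. Case (i), by contrast, is essentially automatic once one has attainment of $M$ and the sharpened monotonicity inequality \eqref{ass_finite_monotonicity}, which is the main simplification over the infinite-horizon proof.
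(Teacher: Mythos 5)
Your proposal is correct and follows essentially the same route as the paper's proof: both exploit attainment of the maximum $M$ on the finite index set to apply the sharpened monotonicity condition \eqref{ass_finite_monotonicity} directly (dispensing with the modulus-of-continuity step), and both dispatch the obstacle branch by passing to a time-$1$ maximiser and re-running the distinct-index chain and pigeonhole argument of \Cref{comparision_thm_infinite}, with positivity of $c$ forcing the indices apart as in \eqref{eq:contradiction_comparison}. Your write-up is in fact somewhat more explicit than the paper's (which simply invokes ``the same contradiction argument''), particularly in tracking that the $F$-branch must eventually activate at a global maximiser of $u-v$.
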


\begin{proof} Let $M \coloneqq u^{\bar{n}}_{\bar{a},\bar{l}} - v^{\bar{n}}_{\bar{a},\bar{l}} = \max_{n,a,l} (u^n_{a,l} - v^n_{a,l})$. The proof follows directly from the arguments in \Cref{comparision_thm_infinite}. Assuming that the subsolution $u$ satisfies $u_{\bar{a}} \leq \mathcal{M} u$ leads to
\begin{align*}
     u^{\bar{n}}_{\bar{a},\bar{l}} - v^{\bar{n}}_{\bar{a},\bar{l}} \leq -c \leq u^1_{a^{*}, l^{*}} - v^1_{a^{*}, l^{*}}\quad \mbox{for some $a^{*}$ and $l^{*}$,}
\end{align*}
so that the maximum is also achieved at $(1, a^{*}, l^{*})$. Then the same contradiction argument leads to $\tilde{F}_{\bar{a}}(u)^{\bar{n}}_{\bar{l}} \leq 0$, but then since $v$ is a supersolution and $M > 0$, we have by the monotonicity property
\begin{equation*}
    \beta (u^{\bar{n}}_{\bar{a},\bar{l}} - v^{\bar{n}}_{\bar{a},\bar{l}}) \leq \tilde{F}_{\bar{a}}(u)^{\bar{n}}_{\bar{l}}- \tilde{F}_{\bar{a}}(v)^{\bar{n}}_{\bar{l}} \leq 0
\end{equation*}
which is again a contradiction. Hence we must have $M \leq 0$ as required.
\end{proof}

We now present a penalty approximation to the QVI \eqref{discrete_qvi}. The motivations behind this are threefold. Firstly, the penalty method holds a crucial advantage over a more traditional policy iteration approach, as using the latter can lead to numerical instabilities or indeed ill-defined iterates, due to the lack of a guarantee of the invertibility of the matrix arising from the intervention operator $\mathcal{M}$. A simple example of such an instance is given in \cite[Section 6]{reisinger2020error}, with a more in depth explanation given in \cite{weakly_chained}. 
Secondly, for QVIs that arise from the discretisation of diffusions on continuous state spaces, the number of policy iterations is typically large for fine meshes, as exposed in \cite{reisinger2012use}, while semismooth Newton iterations for the penalised system are robust under mesh refinement. Lastly, penalisation provides naturally a constructive proof of existence to the solutions of the QVI \eqref{discrete_qvi}. Consider the following penalised problem.

\begin{prob}\label{penalised_problem}
Let $\rho \geq 0$ be the penalty parameter. Find $u^{\rho} = (u^{\rho}_a)_{a \in A} \in \R^{N \times L \times d}$ such that 
\begin{equation}\label{penalised_eq}
    G^{\rho}_a(v) \coloneqq \tilde{F}_a(u^{\rho}) - \rho\ \pi \left(\mathcal{M}u^{\rho} - u^{\rho}_a \right) = 0,\ a \in A,
\end{equation}
where the penalisation function $\pi: \R \to \R$ is continuous, non-decreasing with $\pi\vert_{(-\infty,0]} = 0$ and $\pi\vert_{(0,\infty)}>0$, and is applied elementwise.
\end{prob}

Thus in the penalised problem, a penalty $\rho$ is applied whenever the condition ${u^{\rho}_a - \mathcal{M}u^{\rho} \geq 0}$ is violated. As $\rho \uparrow \infty$, the penalised solution should then converge to the solution of the discrete QVI \eqref{discrete_qvi}. We first show below that for each fixed $\rho$, \eqref{penalised_eq} satisfies a comparison principle. This implies uniqueness for \Cref{penalised_problem}. The argument follows similarly to the approach in \cite{Reisinger_Zhang_penalty} and \Cref{comparison_qvi}.

\begin{prop}\label{comparison_penalty}
For any penalty parameter $\rho\geq 0$, if $u^{\rho} = (u^{\rho}_a)_{a \in A}$ (resp., $v^{\rho} = (v^{\rho}_a)_{a \in A}$) satisfies
\begin{equation*}
    \tilde{F}_a( u^{\rho}) - \rho\ \pi \left(\mathcal{M}u^{\rho} - u^{\rho}_a \right)  \leq 0 \quad (\mathrm{resp.,} \geq 0),
\end{equation*}
then $u^{\rho}\leq v^{\rho}$.
\end{prop}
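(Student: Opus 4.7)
My plan is to argue by contradiction, closely mirroring the pattern used in \Cref{comparison_qvi}, but exploiting the monotonicity of the penalty function $\pi$ together with the substochasticity of $Q_n$ to avoid the pigeonhole argument. Set $M \coloneqq \max_{n,a,l}\big(u^{\rho,n}_{a,l} - v^{\rho,n}_{a,l}\big)$ and suppose $M > 0$; since the domain is finite, $M$ is attained at some $(\bar{n},\bar{a},\bar{l})$. Subtracting the sub- and supersolution inequalities at $(\bar{n},\bar{a},\bar{l})$ yields
\begin{equation*}
    \tilde{F}_{\bar{a}}(u^{\rho})^{\bar{n}}_{\bar{l}} - \tilde{F}_{\bar{a}}(v^{\rho})^{\bar{n}}_{\bar{l}}
    \ \leq\ \rho\,\Big[\pi\big(\mathcal{M}u^{\rho} - u^{\rho}_{\bar{a}}\big)^{\bar{n}}_{\bar{l}}
                     - \pi\big(\mathcal{M}v^{\rho} - v^{\rho}_{\bar{a}}\big)^{\bar{n}}_{\bar{l}}\Big].
\end{equation*}
The strategy is then to show the right-hand side is $\leq 0$, which will contradict the lower bound $\beta M > 0$ on the left-hand side coming from \eqref{ass_finite_monotonicity}.

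The key step is to control the penalty term. Since $u^{\rho,1}_{a,l} - v^{\rho,1}_{a,l} \leq M$ for every $(a,l)$, it follows that $\overline{u^{\rho,1} - c}_l - \overline{v^{\rho,1} - c}_l \leq M$ for every $l$ (the max of componentwise-dominated vectors). Applying the substochastic matrix $Q_{\bar{n}}$, whose row sums are bounded by $1$ and whose entries are non-negative, gives
\begin{equation*}
    (\mathcal{M}u^{\rho})^{\bar{n}}_{\bar{l}} - (\mathcal{M}v^{\rho})^{\bar{n}}_{\bar{l}}
    = \big(Q_{\bar{n}}\,(\overline{u^{\rho,1}-c} - \overline{v^{\rho,1}-c})\big)_{\bar{l}} \ \leq\ M.
\end{equation*}
Combined with $u^{\rho}_{\bar{a}}(\bar{n},\bar{l}) - v^{\rho}_{\bar{a}}(\bar{n},\bar{l}) = M$, this yields
\begin{equation*}
    \big(\mathcal{M}u^{\rho} - u^{\rho}_{\bar{a}}\big)^{\bar{n}}_{\bar{l}}
    \ \leq\ \big(\mathcal{M}v^{\rho} - v^{\rho}_{\bar{a}}\big)^{\bar{n}}_{\bar{l}},
\end{equation*}
and since $\pi$ is non-decreasing, the bracketed expression above is $\leq 0$. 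Hence $\tilde{F}_{\bar{a}}(u^{\rho})^{\bar{n}}_{\bar{l}} - \tilde{F}_{\bar{a}}(v^{\rho})^{\bar{n}}_{\bar{l}} \leq 0$.

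Finally, I invoke the finite monotonicity property \eqref{ass_finite_monotonicity} at the maximising index: $\tilde{F}_{\bar{a}}(u^{\rho})^{\bar{n}}_{\bar{l}} - \tilde{F}_{\bar{a}}(v^{\rho})^{\bar{n}}_{\bar{l}} \geq \beta M > 0$, contradicting the inequality just obtained. Therefore $M \leq 0$, which is the desired conclusion. The main subtlety to get right is the cancellation at the maximising index: the substochasticity of $Q_n$ is what prevents $\mathcal{M}u^{\rho} - \mathcal{M}v^{\rho}$ from exceeding $M$, and without it the ``$-u^{\rho}_{\bar{a}}$'' term would not dominate. This is cleaner than in \Cref{comparision_thm_infinite} because the penalty formulation means there is no separate obstacle regime to handle via the pigeonhole argument; the monotonicity of $\pi$ absorbs that case directly.
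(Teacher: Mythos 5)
Your proof is correct and follows essentially the same route as the paper's: locate the maximising index $(\bar{n},\bar{a},\bar{l})$, use the componentwise bound on $\overline{u^{\rho,1}-c}-\overline{v^{\rho,1}-c}$ together with substochasticity of $Q_{\bar{n}}$ to get $(\mathcal{M}u^{\rho}-u^{\rho}_{\bar{a}})^{\bar{n}}_{\bar{l}} \leq (\mathcal{M}v^{\rho}-v^{\rho}_{\bar{a}})^{\bar{n}}_{\bar{l}}$, invoke monotonicity of $\pi$ to kill the penalty difference, and contradict the monotonicity bound $\beta M>0$ from \eqref{ass_finite_monotonicity}. Your closing remark is also accurate: the paper's penalty proof likewise dispenses with the pigeonhole step, which is needed only for the unpenalised QVI where the obstacle regime must be handled separately.
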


\begin{proof}
Let $M \coloneqq u^{\rho,\bar{n}}_{\bar{a},\bar{l}} - v^{\rho,\bar{n}}_{\bar{a},\bar{l}} = \max_{n,a,l} (u^{\rho,n}_{a,l} - v^{\rho,n}_{a,l})$. Suppose for a contradiction that $M > 0$. From the previous proposition, we have that 
\begin{align*}
\left(Q_{\bar{n}} \overline{u^{\rho,1} - c}\right)_{\bar{l}} - \left(Q_{\bar{n}} \overline{v^{\rho,1} - c}\right)_{\bar{l}}
    \leq u^{\rho,\bar{n}}_{\bar{a},\bar{l}} - v^{\rho,\bar{n}}_{\bar{a},\bar{l}}.
\end{align*}
As $\pi$ is non-decreasing,
\begin{equation}\label{eq:pen_comp_pi}
     \pi \left( \left(Q_{\bar{n}} \overline{u^{\rho,1} - c}\right)_{\bar{l}}  -  u^{\rho,\bar{n}}_{\bar{a},\bar{l}} \right) \leq  \pi \left( \left(Q_{\bar{n}} \overline{v^{\rho,1} - c}\right)_{\bar{l}}- v^{\rho,\bar{n}}_{\bar{a},\bar{l}}\right).
\end{equation}
As $u^{\rho}$ and $v^{\rho}$ are respectively sub and super solutions of \eqref{penalised_eq}, we have
\begin{align*}
    & \tilde{F}_{\bar{a}}(u^{\rho})^{\bar{n}}_{\bar{l}} -  \rho\ \pi \left(\left(\mathcal{M}u^{\rho}\right)^{\bar{n}}_{\bar{l}} -  u^{\rho,\bar{n}}_{\bar{a},\bar{l}} \right) - \left( \tilde{F}_{\bar{a}}(v^{\rho})^{\bar{n}}_{\bar{l}} -  \rho\ \pi \left( \left(\mathcal{M}v^{\rho}\right)^{\bar{n}}_{\bar{l}} - v^{\rho,\bar{n}}_{\bar{a},\bar{l}} \right) \right) \\
    \leq\ & \tilde{F}_{\bar{a}}(u^{\rho})^{\bar{n}}_{\bar{l}} - \tilde{F}_{\bar{a}}(v^{\rho})^{\bar{n}}_{\bar{l}} - \rho \left(  \pi \left( \left(Q_{\bar{n}} \overline{u^{\rho,1} - c}\right)_{\bar{l}}  -  u^{\rho,\bar{n}}_{\bar{a},\bar{l}} \right)  -   \pi \left( \left(Q_{\bar{n}} \overline{v^{\rho,1} - c}\right)_{\bar{l}}- v^{\rho,\bar{n}}_{\bar{a},\bar{l}}\right) \right)\\
    \leq\ & 0
\end{align*}
Hence by \eqref{eq:pen_comp_pi}, $\tilde{F}_{\bar{a}}(u^{\rho})^{\bar{n}}_{\bar{l}} -  \tilde{F}_{\bar{a}}(v^{\rho})^{\bar{n}}_{\bar{l}} \leq 0$. The monotonicity assumption of $F$ then leads to a contradiction, so that $M \leq 0 $ as required.
\end{proof}

Next, we show that the penalised solutions are uniformly bounded above, which is crucial for the convergence towards the unpenalised problem.
\begin{lemma}\label{lem_bound}
    For $\rho \geq 0$ and $c \geq 0$, let $u^{\rho}$ be the solution to \eqref{penalised_eq}. Then $\lVert u^{\rho} \rVert \leq \lVert F(0) \rVert / \beta$.
\end{lemma}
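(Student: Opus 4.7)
The plan is to control $\lVert u^{\rho}\rVert$ (in the sup norm on $\R^{N\times L\times d}$) by bounding the maximum $u^{*}\coloneqq \max_{n,a,l}u^{\rho,n}_{a,l}$ and the minimum $u_{*}\coloneqq \min_{n,a,l}u^{\rho,n}_{a,l}$ separately. In both cases I would reduce the problem to the monotonicity property \eqref{ass_finite_monotonicity} applied against the reference function $v\equiv 0$, so as to read off a bound in terms of $\tilde F(0)$.

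For the upper bound, assume $u^{*}>0$ (the case $u^{*}\leq 0$ is trivial) and let $(\bar n,\bar a,\bar l)$ be a maximising triple. The key algebraic step is to show that the penalty term in \eqref{penalised_eq} vanishes at this index: for every $l'$,
\begin{align*}
    \left(\overline{u^{\rho,1}-c}\right)_{l'}=\max_a\!\left(u^{\rho,1}_{a,l'}-c_{a,l'}\right)\leq u^{*}-\min_a c_{a,l'}\leq u^{*},
\end{align*}
using $c\geq 0$; summing against row $\bar l$ of the substochastic matrix $Q_{\bar n}$ and invoking $u^{*}>0$ gives $(\mathcal{M}u^{\rho})^{\bar n}_{\bar l}\leq u^{*}=u^{\rho,\bar n}_{\bar a,\bar l}$. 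Since $\pi$ vanishes on $(-\infty,0]$, the penalty term drops out and \eqref{penalised_eq} forces $\tilde F_{\bar a}(u^{\rho})^{\bar n}_{\bar l}=0$. Applying \eqref{ass_finite_monotonicity} to the pair $(u^{\rho},0)$, at whose maximising index $(\bar n,\bar a,\bar l)$ the difference equals $u^{*}>0$, yields $-\tilde F_{\bar a}(0)^{\bar n}_{\bar l}\geq \beta u^{*}$, whence $u^{*}\leq \lVert \tilde F(0)\rVert/\beta$.

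For the lower bound no obstacle analysis is needed. Because $\rho\geq 0$ and $\pi\geq 0$, equation \eqref{penalised_eq} gives $\tilde F_a(u^{\rho})\geq 0$ pointwise, independently of the sign of $\mathcal{M}u^{\rho}-u^{\rho}_a$. If $u_{*}<0$ is attained at some $(\bar n,\bar a,\bar l)$, then $0-u^{\rho}$ attains its maximum $-u_{*}>0$ there, and \eqref{ass_finite_monotonicity} applied to $(0,u^{\rho})$ (with roles reversed) yields $\tilde F_{\bar a}(0)^{\bar n}_{\bar l}\geq \tilde F_{\bar a}(u^{\rho})^{\bar n}_{\bar l}+\beta(-u_{*})\geq \beta(-u_{*})$, so $-u_{*}\leq \lVert \tilde F(0)\rVert/\beta$. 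Combining the two bounds completes the proof.

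The only substantive obstacle is the algebraic inequality $(\mathcal{M}u^{\rho})^{\bar n}_{\bar l}\leq u^{*}$ in the maximum case: it is here that both the substochasticity of $\{Q_n\}$ and the sign assumption $c\geq 0$ are genuinely required, and neither hypothesis on its own is sufficient to close the estimate. Once that inequality is in place, what remains are two routine invocations of monotonicity against the zero function.
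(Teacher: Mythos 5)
Your proof is correct and is essentially the argument the paper intends: the paper disposes of this lemma by citing \cite[Lemma 2.3]{Reisinger_Zhang_penalty}, remarking that it relies on the monotonicity condition and the non-negativity of the penalty function, which is precisely your skeleton of bounding $u^{*}$ and $u_{*}$ separately by applying \eqref{ass_finite_monotonicity} against the zero function, with the penalty term killed at a positive maximum (upper bound) and the sign $\tilde{F}_a(u^{\rho}) = \rho\,\pi(\cdot) \geq 0$ (lower bound). You moreover supply, correctly, the one adaptation the paper leaves implicit in its citation, namely the estimate $(\mathcal{M}u^{\rho})^{\bar n}_{\bar l} \leq u^{*}$ at the maximising index, where the substochasticity of $Q_{\bar n}$, the assumption $c \geq 0$, and the positivity $u^{*}>0$ jointly replace the identity-matrix setting of the cited switching-problem lemma.
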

\begin{proof}
    This is a direct adaptation of \cite[Lemma 2.3]{Reisinger_Zhang_penalty}, where the same estimate is shown for discrete switching problems, relying on the monotonicity condition and the non-negativity of the penalty function. 
\end{proof}

We are now ready to show the well-posedness of the QVI \Cref{abstract_qvi_infinite}, which arises from the infinite horizon OCM \eqref{qvi_infinite}. In order to approximate the truncated problem, we require the following additional assumption.

\begin{ass}\label{ass_finite_dependence}
    For each $(n,x) \in \N \times \mathcal{X}$, there is a finite set $\Omega_{n,x}$ such that for every $r \in \R^A$, $F_a(n,x,r,\phi)$ only depends on the values of $\phi$ on $\Omega_{n,x}$, i.e.
    \begin{align*}
        F_a(n,x,r,\phi) = f_a(n,x,r,\phi\vert_{\Omega_{n,x}})
    \end{align*}
    for some function $f_a$.
\end{ass}

\begin{thm}
    \sloppy Given \Cref{ass_monotonicity,ass_finite_dependence}, the QVI \Cref{abstract_qvi_infinite} is well-posed. In particular, let $u^{\rho,N}$ denote the solution to the penalised problem \eqref{penalised_eq} with penalty parameter $\rho$ and truncation time $N$. Then, for fixed $c \geq 0$, ${u^N = \lim_{\rho\to\infty}u^{\rho,N}}$ exists and ${u = \lim_{N \to\infty} u^N}$ exists (here consider $u^N$ as functions from $\N \times \mathcal{X}$ to $\R^A$ by trivially defining $u^N_a(n,x) = 0$ for $n > N$). In both cases the convergence is monotone from below. Moreover, if $c>0$ then $u^{N}$ and $u$ solves \eqref{discrete_qvi} and \eqref{abstract_qvi_infinite} respectively.  
\end{thm}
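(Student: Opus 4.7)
The plan is to follow the four stages built into the statement: construct the penalised solutions $u^{\rho,N}$ on the finite truncation, pass $\rho \to \infty$ to obtain a truncated-QVI solution $u^N$, pass $N \to \infty$ to obtain the solution $u$ of the infinite QVI \eqref{abstract_qvi_infinite}, and apply the comparison principles of Propositions \ref{comparision_thm_infinite}, \ref{comparison_qvi} and \ref{comparison_penalty} to secure uniqueness at each level.

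For fixed $\rho \geq 0$ and $N$, \eqref{penalised_eq} is a continuous finite-dimensional equation; I would establish existence by continuation in $\rho$ starting from $\rho = 0$, where the strictly monotone system $\tilde F_a(u) = 0$ admits a unique solution via \eqref{ass_finite_monotonicity} and the a priori bound of Lemma \ref{lem_bound}. Uniqueness along the branch is given by Proposition \ref{comparison_penalty}. Monotonicity in $\rho$ is then immediate: for $\rho_1 < \rho_2$ the identity
\begin{align*}
G^{\rho_2}_a(u^{\rho_1,N}) = G^{\rho_1}_a(u^{\rho_1,N}) + (\rho_1 - \rho_2)\,\pi\bigl(\mathcal{M}u^{\rho_1,N} - u^{\rho_1,N}_a\bigr) \leq 0
\end{align*}
holds because $\pi \geq 0$, so $u^{\rho_1,N}$ is a subsolution at the larger penalty and Proposition \ref{comparison_penalty} yields $u^{\rho_1,N} \leq u^{\rho_2,N}$. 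Combined with the uniform bound of Lemma \ref{lem_bound}, the pointwise monotone limit $u^N \coloneqq \lim_{\rho\to\infty} u^{\rho,N}$ exists.

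To verify that $u^N$ solves the truncated QVI \eqref{discrete_qvi} when $c > 0$, I would exploit the identity $\tilde F_a(u^{\rho,N}) = \rho\,\pi(\mathcal{M}u^{\rho,N} - u^{\rho,N}_a)$: the left-hand side is uniformly bounded in $\rho$ (Lemma \ref{lem_bound} plus continuity of $\tilde F$), so $\pi(\mathcal{M}u^{\rho,N} - u^{\rho,N}_a) \to 0$, and the strict positivity of $\pi$ on $(0,\infty)$ together with continuity of $\mathcal{M}$ forces $\mathcal{M}u^N - u^N_a \leq 0$ in the limit. Splitting pointwise into the strict inactivity set (where $\pi \equiv 0$ in a neighbourhood, so $\tilde F_a(u^{\rho,N}) = 0$ for $\rho$ large and $\tilde F_a(u^N) = 0$ by continuity) and the coincidence set (where $\tilde F_a(u^{\rho,N}) = \rho\,\pi(\cdot) \geq 0$ passes to $\tilde F_a(u^N) \geq 0$) verifies both branches of the $\min$.

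For the monotonicity in $N$ and the final passage, I would extend $u^{\rho,N}$ to $\{1,\ldots,N+1\}\times \mathcal{X}$ by setting $\hat u^{\rho,N}(N+1,\cdot) \coloneqq \mathcal{M}u^{\rho,N}(N+1,\cdot)$, so the new boundary condition is satisfied with equality, and use Assumption \ref{ass_finite_dependence} to check that the interior inequalities of the $(N+1)$-penalised system are preserved: untouched at indices $n < N$, and reducing at $n = N$ to a subsolution inequality via the original boundary equality $u^{\rho,N}_a(N,\cdot) = \mathcal{M}u^{\rho,N}(N,\cdot)$. Proposition \ref{comparison_penalty} then delivers $u^{\rho,N} \leq u^{\rho,N+1}$ on the common domain; letting $\rho\to\infty$ preserves this ordering, and once combined with the trivial zero-extension beyond $N$ it persists on all of $\N \times \mathcal{X}$. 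The uniform bound transfers to $u^N$, defining $u \coloneqq \lim_{N\to\infty} u^N$, and for any fixed $(n,x,a)$ Assumption \ref{ass_finite_dependence} localises the QVI to finitely many values all contained in the unaltered region for $N$ large, so continuity of $\tilde F$ and $\mathcal{M}$ permits passage to the infinite QVI \eqref{abstract_qvi_infinite}; uniqueness follows from Proposition \ref{comparision_thm_infinite}. I expect the main obstacle to be the $N$-monotonicity step, specifically verifying that the extension $\hat u^{\rho,N}$ genuinely satisfies the penalised subsolution inequality at the junction index $n = N$ without tacitly using values beyond the new horizon, and reconciling the zero-extension with the ordering of the solutions — this is precisely where Assumption \ref{ass_finite_dependence} enters.
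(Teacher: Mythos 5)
Your architecture matches the paper's (penalise on the truncation, let $\rho\to\infty$, let $N\to\infty$, invoke the comparison principles for uniqueness), and your $\rho$-monotonicity argument via $G^{\rho_2}_a(u^{\rho_1,N})\leq 0$ together with the verification that $u^N$ solves \eqref{discrete_qvi} are correct and essentially the argument the paper imports from \cite{Reisinger_Zhang_penalty}. The genuine gap is the $N$-monotonicity step, and you have located it yourself without resolving it. After you set $\hat u^{\rho,N}_a(N+1,\cdot)\coloneqq \mathcal{M}u^{\rho,N}(N+1,\cdot)$, the penalty term at the junction index $n=N$ vanishes (since $\hat u_a(N,\cdot)=\mathcal{M}\hat u(N,\cdot)$ by the old boundary equality), so the subsolution property of $\hat u^{\rho,N}$ for the $(N+1)$-penalised system requires $\tilde{F}_a(\hat u)^{N}_{\cdot}\leq 0$ outright; but $\tilde F_a$ at $n=N$ depends on the invented value at $n=N+1$, and nothing in \Cref{ass_monotonicity} or \Cref{ass_finite_dependence} gives this sign. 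Already in the OCM instance $\tilde F_a(u)^n_x = u_a(n,x)-\gamma u_a(n+1,x)-(P^n_a r_a)_x$, the junction quantity $\mathcal{M}u(N,x)-\gamma\,\mathcal{M}u(N+1,x)-(P^N_a r_a)_x$ has no a priori sign. The paper sidesteps the penalised level entirely here: it compares \emph{after} the $\rho$-limit. Because $u^{N+1}$ solves the QVI \eqref{discrete_qvi} at the (now interior) index $n=N$, the second branch of the min gives exactly $u^{N+1}_a(N,\cdot)\geq \mathcal{M}u^{N+1}(N,\cdot)$, so the restriction of $u^{N+1}$ is a supersolution of the $N$-truncated problem including its boundary condition, and \Cref{comparison_qvi} yields $u^N\leq u^{N+1}$ in one line. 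Note also that the reverse repair at the penalised level (restricting $u^{\rho,N+1}$) fails as well, since for finite $\rho$ the penalised solution may violate the obstacle at $n=N$ by an $O(1/\rho)$ amount, so no exact boundary supersolution inequality is available before the limit.

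The second gap is your final passage $N\to\infty$: you assert that \Cref{ass_finite_dependence} plus ``continuity of $\tilde F$ and $\mathcal{M}$'' permits passing to the limit in \eqref{abstract_qvi_infinite}, but continuity of $F_a$ in its function argument $\phi$ is \emph{not} among the standing assumptions — \Cref{ass_monotonicity} provides only uniform continuity in $r$ (uniformly in $(n,x)$) and monotonicity in $\phi$ through the comparison-type condition. The paper flags precisely this point (``we cannot directly appeal to continuity as before'') and substitutes a careful argument: using \Cref{ass_finite_dependence} and the pointwise monotone convergence from below, one chooses indices $N'\leq\tilde N$ and assembles an auxiliary $\tilde u$ so that the maximality hypothesis $\theta = r_a-s_a=\max_{j}(r_j-s_j)$ of the monotonicity condition is realised, deduces $F_a(n,x,u(n,x),u_a)\geq 0$ via the modulus of continuity in $r$, and then treats the branch $u_a-\mathcal{M}u>0$ separately, using that $u^N_a-\mathcal{M}u^N>0$ for large $N$ forces $\tilde F_a(u^N)=0$. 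Your sketch needs this (or an equivalent one-sided sandwich exploiting that the convergence is monotone) to close. A minor further remark: your continuation-in-$\rho$ existence argument lacks both a base case at $\rho=0$ and the topological ingredient making the continuation run (the penalty $\pi$ is merely continuous); the paper instead cites a mollification argument from \cite{Reisinger_Zhang_penalty}, so at sketch level this substitution is acceptable, but it is not free.
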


\begin{proof}
    The existence of penalised solutions $u^{\rho,N}$ to \eqref{penalised_eq} can be established via a mollification argument, applying \cite[Theorem 2.5]{Reisinger_Zhang_penalty} to $G^{\rho,N}$. Uniqueness of the peanlised equation is given by \Cref{comparison_penalty}. The proof of the convergence of $u^{\rho,N} \uparrow u^N$, where $u^N$ solves \eqref{discrete_qvi} mirrors the argument in \cite[Theorem 2.6]{Reisinger_Zhang_penalty}: by \Cref{comparison_penalty}, for each $N$, $u^{\rho,N}$ is monotone in $\rho$, and together with the bound \Cref{lem_bound}, this gives the desired convergence. The continuity of the operators in the QVI is then used to show that $u^N$ solves \eqref{discrete_qvi}. We shall defer the reader to the relevant theorems cited above for the precise details.
    
    For the convergence of $u^N$ to some function $u$, we see also that $u^N$ is monotone in $N$: given a solution $u^{N+1}$ of \eqref{discrete_qvi}, we have $u^{N+1}(N,\cdot) \geq \mathcal{M}u^{N+1}(N,\cdot)$, so that $u^{N+1} \geq u^N$ by \Cref{comparison_qvi}. By \Cref{lem_bound}, $u^N$ is also bounded above, independent of $N$. Hence $u^N \uparrow u$. 
    
    \sloppy We now show that $u$ solves the QVI \eqref{abstract_qvi_infinite}. As $F(n,x,r,\phi)$ is only assumed to be continuous in $r$ in the untruncated case, we cannot directly appeal to continuity as before. We proceed as follows. Fix $n \in \N$, $x\in \mathcal{X}$, $a \in A$ and ${\varepsilon >0}$. As $\mathcal{M}$ is continuous, we have $u_a - \mathcal{M}u \geq 0$. By \Cref{ass_finite_dependence} and the pointwise convergence of $u^N$, we can choose a sufficiently large $\pr{N}$ such that
    \begin{align*}
        (u_a - u^{\pr{N}}_{a})\vert_{\Omega_{n,x}} &\leq \varepsilon,\\
        \pr{\varepsilon} = u_a(n,x) - u^{\pr{N}}_a(n,x) &\leq \varepsilon.
    \end{align*}
    Moreover, we can find $\tilde{N} \geq \pr{N}$ such that for all $b \neq a \in A$,
    \begin{align*}
        u_b(n,x) - u^{\pr{N}}_b(n,x) \leq \pr{\varepsilon},
    \end{align*}
    so that we can define $\tilde{u}$ by $\tilde{u}_a = u^{\pr{N}}_a(n,x)$ and $\tilde{u}_b = u^{\tilde{N}}_b(n,x)$ for $b \neq a$. Therefore,
    \begin{align*}
        u_a(n,x) - \tilde{u}_a = \max_{b\in A} (u_b(n,x) - \tilde{u}_b) = \pr{\varepsilon}.
    \end{align*}
    Then, we apply the monotonicity condition in \Cref{ass_monotonicity}, with $r = u(n,x) + (\varepsilon - \pr{\varepsilon})$, $s = \tilde{u}$, noting also that the functions $u_a - \varepsilon \leq u^{\tilde{N}}_a$ on $\Omega_{n,x}$. This gives
    \begin{align*}
        F_{a}(n,x,u(n,x) + \varepsilon - \pr{\varepsilon}, u_{a}) - F_{a}(n, x, \tilde{u}, u^{\tilde{N}}_{a}) \geq \beta \varepsilon.
    \end{align*}
    By the uniform continuity of $r \mapsto F_a(n,x,r,\phi)$, we have
    \begin{align*}
        F_{a}(n,x,u(n,x) + \varepsilon - \pr{\varepsilon}, u_{a}) &\geq F_{a}(n, x, u^{\tilde{N}}(n,x), u^{\tilde{N}}_{a}) - \omega\left(\lvert u^{\tilde{N}}_a(n,x) -u^{\pr{N}}_a(n,x) \rvert \right)  + \beta \varepsilon\\
        & \geq - \omega\left(\lvert u^{\tilde{N}}_a(n,x) -u^{\pr{N}}_a(n,x) \rvert \right)  + \beta \varepsilon,
    \end{align*}
    where the inequality follows from the fact that $u^{\tilde{N}}$ satisfies the QVI \eqref{discrete_qvi}, and $\omega$ is the modulus of continuity. Finally, by the convergence of $u^N$ to $u$, and the uniform continuity of ${r \mapsto F_a(n,x,r,\phi)}$ again, we have $F_{a}(n,x,u(n,x), u_{a}) \geq 0$.

    We now have $F_{a}(n,x,u(n,x), u_{a}) \geq 0$ and $u_a - \mathcal{M}u \geq 0$. Suppose $u_a - \mathcal{M}u > 0$. We show that $F_{a}(n,x,u(n,x), u_{a}) = 0$ to conclude. By the uniform continuity of ${r \mapsto F_a(n,x,r,\phi)}$, for any $\varepsilon >0$ there exists $\pr{N} \geq 0$ such that for all $N \geq \pr{N}$, 
    \begin{align*}
        0 \leq F_a(n,x,u(n,x),u_a) &< F_a(n,x,u^N(n,x), u_a) + \varepsilon\\
        & \leq  F_a(n,x,u^N(n,x), u^N_a) + \varepsilon,
    \end{align*}
    where the second inequality follows from the monotonicity condition. By the continuity of $\mathcal{M}$, $u^N_a - \mathcal{M}u^N > 0$ for sufficiently large $N$, and therefore $ F_a(n,x,u^N(n,x), u^N_a) = 0$. But as $\varepsilon$ is arbitrary, we have $F_{a}(n,x,u(n,x), u_{a}) = 0$ as required.    
\end{proof}

Thus, we have a straightforward computation scheme to solve for the OCM. We first set up the discrete QVIs arising from the problem and choose a suitable time for truncation, then the equations are approximated by the penalised problem. The solution of the penalised problem can then be in turn approximated iteratively with semismooth Newton methods \cite{Witte_Reisinger_penalty_obstacle}. Formally speaking, starting with an initialisation $v^{(0)}$ to the penalised problem
\begin{equation*}
   G^{\rho}(v) = \tilde{F}( v) - \rho\ \pi \left(\mathcal{M}v - v \right) = 0,
\end{equation*}
we obtain the next iterate by solving for
\begin{equation*}
    v^{(k+1)} = v^{(k)} - \mathcal{L}^{\rho}(v^{(k)})^{-1}G^{\rho}(v^{(k)}),
\end{equation*}
where $\mathcal{L}^{\rho}$ denotes the generalised derivative of the function $G^{\rho}$. This shall be the numerical scheme that we adopt for our numerical experiments in the next section.

\section{Numerical experiments}\label{section:numerical example}

In this section, we apply our observation cost framework to three numerical experiments. Sections \ref{subsection_random_walk} and \ref{subsection:HIV} analyse two infinite horizon problems. For these examples, we examine the numerical performance of the penalty method and Newton iterations, as well as the effects of the observation cost on the qualitative behavior of the solutions. For the penalised equations, we will employ the penalty function $\pi(x) = x^{+}$ as in \cite{Reisinger_Zhang_penalty}. Section \ref{subsection:bayesian} considers the parameter uncertainty formulation over a finite horizon. The solutions are obtained through backwards recursion from the terminal conditions. We examine the impact that the extra parameter uncertainty has on the optimal trajectories.

\subsection{Random walk with drift}\label{subsection_random_walk}

Consider an integer-valued random walk whose drift depends on the action space $A = \{+1, -1\}$. The probability of each step is parametrised by $\theta$. Specifically, for any $x \in \mathcal{X} = \mathbb{N}$,
\begin{align}\label{drift_prob}
    p(x+1 \mid x, +1) = \theta,\quad &  p(x-1 \mid x, +1) = 1- \theta; \nonumber\\
    p(x+1 \mid x, -1) = 1-\theta,\quad &  p(x-1 \mid x, -1) = \theta.
\end{align}
We also adopt the following reward function:
\begin{equation*}
    r(x,a) = \frac{1}{\vert x\vert+1}.
\end{equation*}
The mass of this reward function $r$ is concentrated around the origin, so naturally, the optimal action is one that reverts the process back towards the origin.

For this example, we consider the infinite horizon problem. Recall that the discrete QVI \eqref{qvi_infinite} reads: for all $n \geq 0$, $x \in \mathcal{X}$, and $a \in A$,
\begin{align} \label{qvi_random_walk}
    \min \bigg\{ & v^{n}_{a,x} - \gamma v^{n+1}_{a,x} - \Big(P^{n}_a\ r_a\Big)_x\ ,\  v^{n}_{a,x} - \bigg( P^{n}_a\ \overline{\gamma v^{1} + r}\bigg)_x  +  c_{\mathrm{obs}}   \bigg\} = 0, 
\end{align}
Note that there exists a path from $x$ to $y$ over $m$ units of time if and only if $m \geq \vert y-x \vert$ and $m \equiv y\ (\mathrm{mod}\ 2)$. If $\mathcal{S}^x_m$ denotes the set of states that can be reached from $x$ after $m$ units of time, then for a constant action, the $n$-step transition probabilties are given by
\begin{align*}
    p^{(n)}(x^{\prime}\mid x, +1) &= \begin{cases}
    \hfill \binom{n}{k} \theta^{k}(1-\theta)^{n-k} &,\ x^{\prime} \in \mathcal{S}^x_n;\\
    \hfill 0 &,\ x^{\prime} \notin \mathcal{S}^x_n,
    \end{cases} \\
    p^{(n)}(x^{\prime}\mid x, -1)  &= \begin{cases}
    \hfill \binom{n}{k} \theta^{n-k}(1-\theta)^{k} &,\ x^{\prime} \in \mathcal{S}^x_n;\\
    \hfill 0 &,\ x^{\prime} \notin \mathcal{S}^x_n,
    \end{cases}
\end{align*}
where $k =(n+x^{\prime}-x)/2$. For this problem, we shall also incorporate a switching cost $g$ whenever the drift changes (see \Cref{rem:switching}). Hence, in full, the QVI reads:
\begin{align}\label{qvi_explicit_rw}
    \min & \left\{  v^{n}_{+1,x} - \gamma v^{n+1}_{+1,x} - \sum_{x^{\prime}\in\mathcal{S}^x_n} \frac{1}{\vert x^{\prime} \vert +1} \binom{n}{k} \theta^k (1-\theta)^{n-k} ,  \nonumber \right. \\
    & \quad \left. v^{n}_{+1,x} -\gamma  \sum_{x^{\prime}\in\mathcal{S}^x_n} \binom{n}{k} \theta^k (1-\theta)^{n-k} \left(\frac{1}{\vert x^{\prime} \vert +1} +  \max\{v^{1}_{+1,x^{\prime}}, v^1_{-1, x^{\prime}} - g\} \right)   + c_{\mathrm{obs}}  \right\} = 0, \nonumber \\
    \min & \left\{  v^{n}_{-1,x} - \gamma v^{n+1}_{-1,x} - \sum_{x^{\prime}\in\mathcal{S}^x_n} \frac{1}{\vert x^{\prime} \vert +1} \binom{n}{k} \theta^{n-k} (1-\theta)^{k} ,  \nonumber \right. \\
    & \quad \left. v^{n}_{-1,x} - \gamma \sum_{x^{\prime}\in\mathcal{S}^x_n} \binom{n}{k} \theta^{n-k} (1-\theta)^{k} \left(\frac{1}{\vert x^{\prime} \vert +1} +  \max\{v^{1}_{+1,x^{\prime}} -g , v^1_{-1, x^{\prime}}\} \right)   + c_{\mathrm{obs}}  \right\} = 0. 
\end{align}

To close the system to ensure a unique solution, we enforce the following time and spatial boundary conditions. We impose a reflecting boundary at $x = \pm L$, where $L$ is suitably large. In particular,
\begin{align}\label{spatial_bound_rw}
    & p(L\mid L, +1) = \theta, \quad &&p(L-1 \mid L, +1)  = 1 - \theta, \nonumber\\
    &  p(L\mid L, -1)  = 1- \theta, \quad &&p(L-1\mid L, -1) = \theta, \nonumber\\
    & p(-L\mid -L, +1) = 1-\theta, \quad &&p( -L+1 \mid -L, +1) = \theta, \nonumber\\
    &  p(-L\mid -L, -1) = \theta, \quad &&p(-L+1\mid -L, -1) = 1- \theta,
\end{align}
so that the QVI \eqref{qvi_random_walk} for the states $ x = \pm L$ will use the transition probabilities \eqref{spatial_bound_rw} instead.

For the time boundary, we enforce an observation at some large time $N > 0$. The terminal condition then reads (for $-L < x < L$):
\begin{align}\label{time_bound_rw}
    \begin{dcases}v^{N}_{+1,x} - \gamma \sum_{x^{\prime}\in\mathcal{S}^x_N} \binom{N}{k^{\prime}} \theta^k (1-\theta)^{N-k^{\prime}} \left(\frac{1}{\vert x^{\prime} \vert +1} + \max\{v^{1}_{+1,x^{\prime}}, v^1_{-1, x^{\prime}} - g\}  \right)   + c_{\mathrm{obs}} = 0,\\
    v^{N}_{-1,x} - \gamma \sum_{x^{\prime}\in\mathcal{S}^x_N} \binom{N}{k^{\prime}} \theta^{N-k^{\prime}} (1-\theta)^{k^{\prime}} \left(\frac{1}{\vert x^{\prime} \vert +1} + \max\{v^{1}_{+1,x^{\prime}} -g , v^1_{-1, x^{\prime}}\} \right)    + c_{\mathrm{obs}} = 0.\end{dcases}
\end{align}
where $k^{\prime} = (N+x^{\prime}-x) /2$. The analogous equations hold for the spatial boundary $x=\pm L$, but with the transition probabilities \eqref{spatial_bound_rw}. These terminal conditions can be interpreted as the largest possible interval between two observations.

We now proceed to solve the penalised problem for the system \eqref{qvi_explicit_rw}, with boundary conditions \eqref{spatial_bound_rw} and \eqref{time_bound_rw}, through the use of semismooth Newton methods. To initialise the iteration, we solve for the uncoupled system
\begin{equation}\label{random_walk_initial_guess}
    \begin{dcases}v^{n}_{+1,x} - \gamma v^{n+1}_{+1,x} - \sum_{x^{\prime}\in\mathcal{S}^x_n} \frac{1}{\vert x^{\prime} \vert +1} \binom{n}{k} \theta^k (1-\theta)^{n-k} = 0, \\
    v^{n}_{-1,x} - \gamma v^{n+1}_{-1,x} - \sum_{x^{\prime}\in\mathcal{S}^x_n} \frac{1}{\vert x^{\prime} \vert +1} \binom{n}{k} \theta^{n-k} (1-\theta)^{k} = 0,\\
    0 \leq n < N,\ -L < x < L, \end{dcases}
\end{equation}
with the spatial boundary transition probabilities \eqref{spatial_bound_rw} and uncoupled time boundary condition
\begin{equation*}
    \begin{dcases}
    v^{N}_{+1,x} - \gamma \sum_{x^{\prime}\in\mathcal{S}^x_N} \binom{N}{k^{\prime}} \theta^{\pr{k}} (1-\theta)^{N-k^{\prime}} \left(\frac{1}{\vert x^{\prime} \vert +1} + v^{1}_{+1,x^{\prime}} \right)  + c_{\mathrm{obs}} = 0,\\
    v^{N}_{-1,x} - \gamma \sum_{x^{\prime}\in\mathcal{S}^x_N} \binom{N}{k^{\prime}} \theta^{N-k^{\prime}} (1-\theta)^{k^{\prime}}  \left(\frac{1}{\vert x^{\prime} \vert +1} + v^{1}_{-1,x^{\prime}} \right)  + c_{\mathrm{obs}}  = 0,\\
    -L < x < L. \end{dcases}
\end{equation*}

The system \eqref{random_walk_initial_guess} corresponds to the penalised equation with penalty parameter $\rho = 0$. The uncoupled time boundary condition is equivalent to enforcing an observation but with no switching (i.e., assuming that $\overline{v} = v_a$ in each  equation for $v_a$). The iteration terminates once a relative tolerance threshold of $10^{-8}$ is reached.

\begin{table}[t!]
    \centering
    \small
   \begin{tabular}{|c|c||c|c|c|c|c|c|}
        \hline
        & $\rho$ & $10^3$ & $2\times 10^3$ & $4 \times 10^3$
        & $8 \times 10^3$ & $16 \times 10^3$ & $32 \times 10^3$ \\
        \hline
        $c_{\mathrm{obs}} = 0$ & (a) & 2 & 2 & 2 & 2 & 2 & 2\\
        & (b)& 0.0063278 & 0.0031650 & 0.0015828 &  0.0007915 & 0.0003957 & 0.0001979\\
        \hline
        $c_{\mathrm{obs}} = 1/8$ & (a) & 5 & 5 & 5 & 5 & 5 & 5\\
        & (b)& 0.0048459 & 0.0024240 & 0.0012123  & 0.0006062 & 0.0003031 & 0.0001516\\
        \hline
        $c_{\mathrm{obs}} = 1/4$ & (a) & 6 & 6 & 6 & 6 & 6 & 6\\
        & (b)& 0.0033831 & 0.0016926 & 0.0008466 &  0.0004234 & 0.0002117 & 0.0001059\\
        \hline
        $c_{\mathrm{obs}}  = 1/2$& (a) & 6 & 6 & 6 & 6 & 6 & 6\\
        & (b)& 0.0015376 & 0.0007691 & 0.0003846 &  0.0001923 & 0.0000962 & 0.0000481\\
        \hline
        $c_{\mathrm{obs}}  = 1$& (a) & 7 & 7 & 7 & 7 & 7 & 7\\
        & (b)& 0.0006210 &  0.0003105 & 0.0001553 & 0.0000776 & 0.0000388 & 0.0000194\\
        \hline
        $c_{\mathrm{obs}}  = 2$ & (a) & 8 & 8 & 8 & 8 & 8 & 8\\
        & (b) & 0.0002077 & 0.0001038 & 0.0000519 & 0.0000260 & 0.0000130 & 0.0000065 \\
        \hline
        $c_{\mathrm{obs}}  = 4$ & (a) & 7 & 7 & 7 & 7 & 7 & 7\\
        & (b) & 0.0000852 & 0.0000426 & 0.0000213 & 0.0000157 & 0.0000053 & 0.0000027 \\
        \hline
        $c_{\mathrm{obs}}  = 6$ & (a) & 6 & 6 & 6 & 6 & 6 & 6\\
        & (b) & 0.0000307 & 0.0000154 & 0.0000077 & 0.0000038 & 0.0000019 & 0.0000010 \\
        \hline
   \end{tabular}
   \caption{Numerical results for the random walk with drift problem with switching cost $g=0$. Line (a): number of Newton iterations to reach the relative tolerance threshold of $1e^{-8}$. Line (b): the increment sizes $\lVert v^{\rho} - v^{2\rho} \rVert_{\infty}$.}
   \label{random_walk_error_table}
\end{table}

We investigate the numerical performance of our described methods for the case $\theta = 0.75$, ${\gamma = 0.99}$, $L=50$ and $N = 500$, across different cost parameters $c_{\mathrm{obs}}$. Computations are performed using MATLAB R2019b. The numerical solutions are shown in \Cref{random_walk_error_table}, for the case of zero switching cost. Row (a) shows that the number of Newton iterations required to reach the tolerance threshold is independent from the size of the penalty parameter $\rho$. Fewer iterations are required for more extreme values of $c_{\mathrm{obs}}$, but the overall number of iterations remains low across different observation costs. Row (b) shows the increments $\Vert v^{\rho} - v^{2\rho} \Vert_{\infty}$. The values suggests a first-order convergence of the penalisation error with respect to the penalty parameter $\rho$, which is in line with the analogous theoretical results in \cite[Theorem 3.9, 4.2]{Reisinger_Zhang_penalty}. Similar experiments with non-zero switching costs also demonstrate a first-order convergence.

% \begin{table}[ht]
% \centering
%    \begin{tabular}{|c||c|c|c|c|c|c|c|c|}
%    \hline
%     $c_{\mathrm{obs}}$ & 0 & 1/8 & 1/4 & 1/2 & 1 & 2 & 4 & 6\\
%     \hline
%     $x=5$ & 5 & 5 & 7 & 9 & 11 & 15 & 25 & 37 \\
%     \hline
%     $x=10$ & 10 & 12 & 12 & 16 & 20 & 26 & 36 & 48\\
%     \hline
%     $x=30$ & 30 & 40 & 42 & 46 & 54 & 62 & 78 & 110\\
%     \hline
%    \end{tabular}
%    \caption{List of optimal observation times across various states $x$ and costs  $c_{\mathrm{obs}}$.}
%    \label{random_walk_control_table}
% \end{table}

\begin{figure}[ht!]
\centering
\begin{minipage}{.45\textwidth}
  \centering
  \includegraphics[width=7cm]{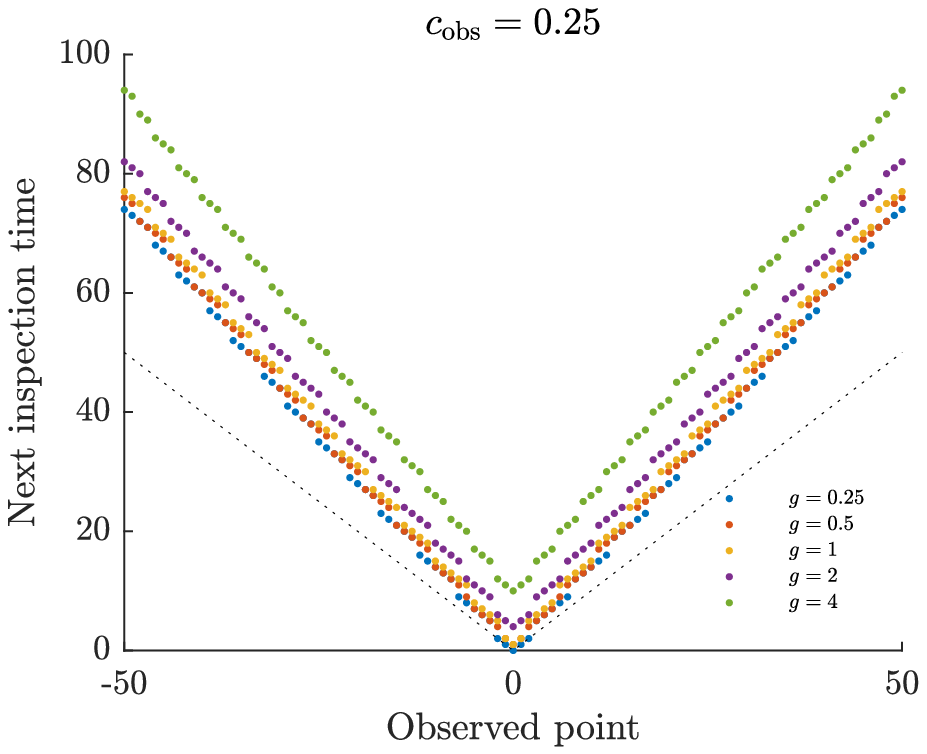}
  \end{minipage}
\begin{minipage}{.45\textwidth}
  \centering
  \includegraphics[width=7cm]{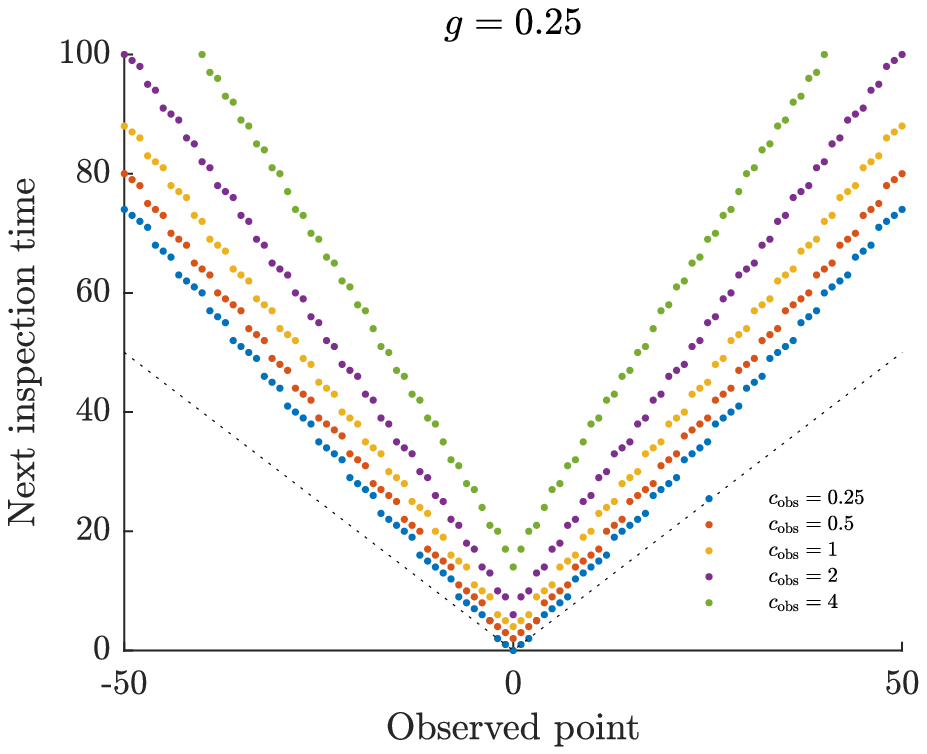}
\end{minipage}
\caption{The optimal waiting time before the next inspection, across the states $[-50, 50]$. Left: fixed observation cost $c_{\mathrm{obs}}=0.25$ against variable switching costs. Right: fixed switching cost $g=0.25$ against variable observation costs.}
\label{fixed costs vs switch}
\end{figure}

We now examine the qualitative behaviour of the solution. It is clear that if the chain is observed to be at a positive state, then the control should be switched to $a=-1$ for a negative drift and vice versa. \Cref{fixed costs vs switch} shows the optimal waiting time for the next inspection, across the state space of $[-50, 50]$. To demonstrate the extent to which the waiting time is due to the observation cost versus the switching cost, we vary both parameters and observe the optimal behaviour. The black dotted line represents the base case of zero observation and switching cost, in which case there is no need to observe until it is possible for the walk to cross the origin again. On the left plot, the observation cost $c_{\mathrm{obs}}$ is fixed whilst the switching cost  $g$ gradually increases, and vice versa on the right. We observe empirically that the observation cost contributes more significantly to the waiting times than the switching cost.

\subsection{Extension of an HIV-treatment model}\label{subsection:HIV}

In this subsection, we implement our formulation of the OCM to an HIV-treatment scheduling problem in \cite{winkelmann_markov_2014}. There, the authors modelled the problem with a continuous-time MDP with observation costs, but does not include the time elapsed variable in dynamic programming. This can be interpreted as an implicit assumption that the observer is given the state of the underlying process at initialisation. We shall implement a discretised version of their model under our formulation with the time elapsed variable. As alluded to in the introduction, this allows in addition initial conditions that are outdated or sub-optimal relative to the objective. We demonstrate the qualitative difference in the optimal policies when varying the initial conditions, whilst replicating the results in the original paper when the initial conditions coincide. We also examine the numerical performance of the penalty method when applied to the system of QVIs for this larger system, compared to that in Section \ref{subsection_random_walk}.

We now proceed to describe the original problem in \cite{winkelmann_markov_2014}. A continuous-time MDP is used to model virus levels of HIV-positive patients over time. With two types of treatment available, the action space is $A = \{0,1,2\}$ (where 0 represents no treatment given). Four virus strains are considered: WT denotes the wild type (susceptible to both treatments), R1 and R2 denotes strains that are each resistant to Treatment 1 and Treatment 2 respectively, and HR denotes the strain that is highly resistant to both. The level of each strain is represented by the states `none' (0), `low' ($l$), `medium' ($m$), and `high' ($h$). Therefore, the state space for the Markov chain is ${\mathcal{X} = \{0,l,m,h\}^4\ \cup \{\ast\} }$, where the asterisk represents patient death. Note in particular that $*$ is an absorbing state. The goal in the original model is to then minimise a cost functional $J: \mathcal{X} \times A \to \mathbb{R}$ of the form:
\begin{equation*}
    J(x, \alpha) = \E \left[ \sum^{\infty}_{j=0} \left( \int^{\tau_{j+1}}_{\tau_j} e^{-\gamma s} c(X_s, \iota(X_{\tau_j}))\ ds + e^{-\gamma \tau_{j+1}} c_{\mathrm{obs}} \right) \right],
\end{equation*}
where $\{\tau_j\}^{\infty}_{j=0}$ are the observation times, and the cost function $c: \mathcal{X} \times A \to \R$ is a linear combination of the productivity loss resulting from each patient's condition and their received treatment.

To adapt the model above for our formulation, we first discretise the MDP, taking each step to represent one day. We then take the model parameters from the original article \cite[Section 3]{winkelmann_markov_2014}, which provides the transition rate matrices $\{Q_a\}_{i \in A}$ and the cost function $c(x,a)$. The transition matrices $\{P_a\}_{a\in A}$ are then given by $P_a=e^{Q_a}$ (as the time unit in \cite{winkelmann_markov_2014} is one day). For illustration purposes, a sparse plot of the transition matrix $P_0$ is shown in \Cref{P0.jpg}. As we are considering maximisation problems in this article, we take $r = -c$ for the reward function. We can now formulate our problem in terms of the following QVI:
\begin{align*}
    \min \bigg\{ & v^n_{a,x} - \gamma v^{n+1}_{a,x} + \Big(e^{nQ_a}c_a \Big)_x\ ,\  v^{n}_{a,x} - \Big(e^{nQ_a}  \overline{\gamma  v^1 + c }\Big)_x + c_{\mathrm{obs}} \bigg\} = 0.
\end{align*}

\begin{figure}[ht!]
\centering
\includegraphics[width=15cm]{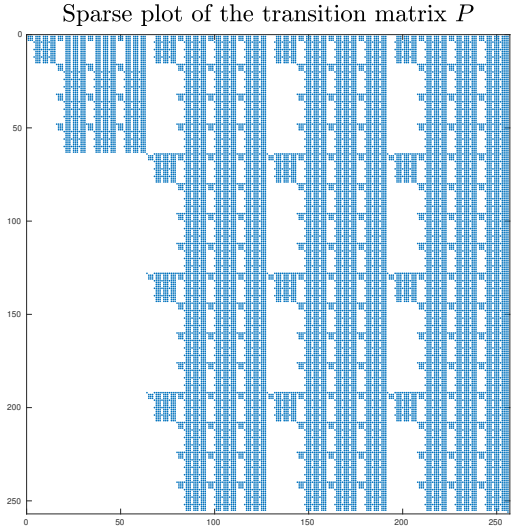}
\caption{Sparsity pattern of the transition matrix $P_0$ (the pattern is the same across all control states).  The state space is encoded as $\{1,\ldots,256\}$, by considering the state vectors [WT, R1, R2, HR] as a base-$4$ string in reverse order (for example, $[h, 0, l, l]$ corresponds to $83$). The death state $*$ is represented by $256$.}
\label{P0.jpg}
\end{figure}

We now follow the same procedure in \Cref{subsection_random_walk} to obtain a numerical solution. Note that for this problem, the spatial domain is finite and we also have a natural spatial boundary arising from the absorbing death state $\ast$, that is, for all $n \geq 0$ and $ a\in A$,
\begin{equation*}
    v^n_{a, \ast} = \sum^{\infty}_{k=0} l \gamma^k = \frac{l}{1-\gamma},
\end{equation*}
where $l$ is a constant representing the average GDP loss due to patient death, the value of which is taken from the parameters in \cite{winkelmann_markov_2014, winkelmann_phd}. A time boundary is once again enforced at some large time $N>0$, which can be interpreted as a mandatory observation at time $N$. Explicitly, this reads
\begin{equation*}
    v^{N}_{a,x} - \Big(e^{NQ_a}  \overline{\gamma  v^1 + c }\Big)_x + c_{\mathrm{obs}} = 0, \quad x \in \mathcal{X}\setminus\{\ast\},\ a \in A.
\end{equation*}

We now solve the associated penalised problem with semismooth Newton methods. As in \Cref{subsection_random_walk}, we choose the initial guess to be the solution to the penalised problem with $\rho = 0$, with uncoupled time boundary conditions. The iterations terminate once a relative tolerance threshold of $10^{-8}$ is reached. The numerical experiments are performed on MATLAB R2019b.

\Cref{HIV_convergence} shows the numerical solution for different values of the truncation time $N$ and $c_{\mathrm{obs}}$ across different penalty parameters $\rho$. Row (a) shows that the number of iterations remains constant with respect to $\rho$, much like the random walk experiment in \Cref{subsection_random_walk}. For this problem, the number of Newton iterations required to reach the $1\mathrm{e}{-8}$ threshold is higher at approximately 20 iterations. However, we find that convergence to the optimal policy is typically achieved within the first 2 iterations. This is depicted in \Cref{initial_guess}, which graphs the first two iterates as well as the final solution for the value function. Row (b) in \Cref{HIV_convergence} shows the successive increments $\lVert v^{\rho} - v^{2\rho} \rVert_{\infty}$ between doubling penalty values. Reassuringly, for this more complicated system, we still see a clear first-order convergence of the penalisation error with respect to the penalty parameter $\rho$. Even for small values of $\rho$, the successive increments were within $O(1)$ (in comparison to the magnitude of the solution which is of $O(10^6)$). This shows that the penalty approximation is very effective for small penalty parameters, and that it works well when extended to the class of QVIs that we introduced in \Cref{section_comparison}.

\begin{figure}[ht!]
\centering
  \includegraphics[width=9cm]{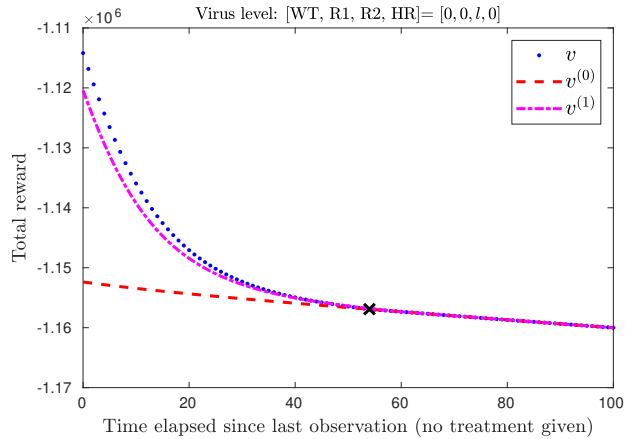}
\caption{Convergence of the Newton iterates towards the solution. The lines show the graphs of $n \mapsto v^n_{0, 4}$ for the initial guess $v^{(0)}$, first iterate $v^{(1)}$ and true solution $v$, where the state $[WT, R1, R2, HR] = [0, 0, l, 0]$ is encoded as $4$ in base $4$. The cross indicates the boundary between the observation regions.}
\label{initial_guess}
\end{figure}

\begin{table}[ht!]
    \centering
    \small
   \begin{tabular}{|c|c||c|c|c|c|c|c|}
   \hline
        & $\rho$ & $10^3$ & $2\times 10^3$ & $4 \times 10^3$
        & $8 \times 10^3$ & $16 \times 10^3$ & $32 \times 10^3$ \\
        \hline
        $N=150,\ c_{\mathrm{obs}} = 200$& (a) & 18 & 18 & 18 & 18 & 18 & 18\\
        & (b)& 1.6141 & 0.8071 & 0.4036 & 0.2018 & 0.1009 & 0.0504\\
        \hline
        $N=150,\ c_{\mathrm{obs}} = 400$& (a) & 21 & 21 & 21 & 21 & 21 & 21\\
        & (b)& 1.5147 & 0.7577 & 0.3790 & 0.1895 & 0.0948 & 0.0474\\
        \hline
        $N=150,\ c_{\mathrm{obs}} =800$ & (a) & 20 & 20 & 20 & 20 & 20 & 20\\
        & (b) & 1.4087 & 0.7047 & 0.3524 & 0.1762 & 0.0881 & 0.0441 \\
        \hline
        $N=300,\ c_{\mathrm{obs}} =200$& (a) & 20 & 20 & 20 & 20 & 20 & 20 \\
        & (b)& 1.6122 & 0.8061 & 0.4031 & 0.2015 & 0.1008 & 0.0504 \\
        \hline
        $N=300,\ c_{\mathrm{obs}} =400$ & (a) & 19 & 19 & 19 & 19 & 19 & 19\\
        & (b)& 1.5131 & 0.7569 & 0.3785 & 0.1893 & 0.0947 & 0.0473\\
        \hline
        $N=300,\ c_{\mathrm{obs}} =800$& (a) & 20 & 20 & 20 & 20 & 20 & 20 \\
        & (b)& 1.4102 & 0.7055 & 0.3528 & 0.1764 & 0.0882 & 0.0441 \\
        \hline
        $N=600,\ c_{\mathrm{obs}} =200$& (a) & 19 & 19 & 19 & 19 & 19 & 19 \\
        & (b)& 1.6111 & 0.8056 & 0.4028 & 0.2014 & 0.1007 & 0.0504 \\
        \hline
        $N=600,\ c_{\mathrm{obs}} =400$ & (a) & 17 & 17 & 17 & 17 & 17 & 17\\
        & (b) & 1.5114 & 0.7561 & 0.3781 & 0.1891 & 0.0945 & 0.0473 \\
        \hline
        $N=600,\ c_{\mathrm{obs}} =800$& (a) &  18 & 18 & 18 & 18 & 18 & 18\\
        & (b)& 1.4065 & 0.7036 & 0.3519 & 0.1760 & 0.0880 & 0.0440 \\
        \hline
   \end{tabular}
   \caption{Numerical results for the HIV-treatment problem. Line (a): number of Newton iterations. Line (b): the increments $\lVert v^{\rho} - v^{2\rho} \rVert$.}
    \label{HIV_convergence}
\end{table}

We now analyse the behaviour of the value function when plotted as a function against time. The top-left graph of \Cref{HIV_optimal_vs_sub} depicts an instance where the patient is under a stable condition. Here the observation region is $[15, N]$. There are limited benefits of frequently paying a high observation cost when it is unlikely that the patient's condition will deteriorate over a short period of time. On the other hand, the top-right graph has an observation region of $[0,53]$. The mathematical intuition behind this is that beyond the observation region, the MDP is expected to enter the absorbing state $*$ with high probability, and the negative reward associated with this absorbing state outweighs any potential benefits of paying the observation cost $c_{\mathrm{obs}}$ for information. In the original model in \cite{winkelmann_markov_2014}, one determines the optimal policy based on an immediate observation in hand. Putting this in the context of our formulation, this amounts to fixing an initial condition in the form of $y= (1,x,a) \in \mathcal{Y}$, and looking forward ahead in time to find the first observation time. This overlooks situations such as that occurring in the top-right graph of \Cref{HIV_optimal_vs_sub}: by initialising at the time origin, one immediately `loops back' for an immediate observation, and therefore does not see the effect of the passage of time on the optimal observation policy.

To examine the behaviour around the decision boundaries, we plot the central finite difference terms $(v^{n+1}_{a,x} - v^{n-1}_{a,x})/2\Delta n$ in the bottom row of \Cref{HIV_optimal_vs_sub}, underneath their respective graphs of the value function. If we consider the plots as a discretisation of a continuous value function, we see that there is much bigger variation within the observation region. Critically, there is non-smoothness across the boundary in the bottom-left graph. This suggests that the solution in continuous-time is $C^2$ in time within each decision region, but only $C^1$ across the boundary. This is in line with theoretical results on the regularity of viscosity solutions in optimal stopping and switching problems \cite[Chapter 5]{Pham2009}, which is a potential direction for future analysis.

\begin{figure}[ht!]
\centering
\begin{minipage}{.45\textwidth}
  \centering
  \includegraphics[width=7cm]{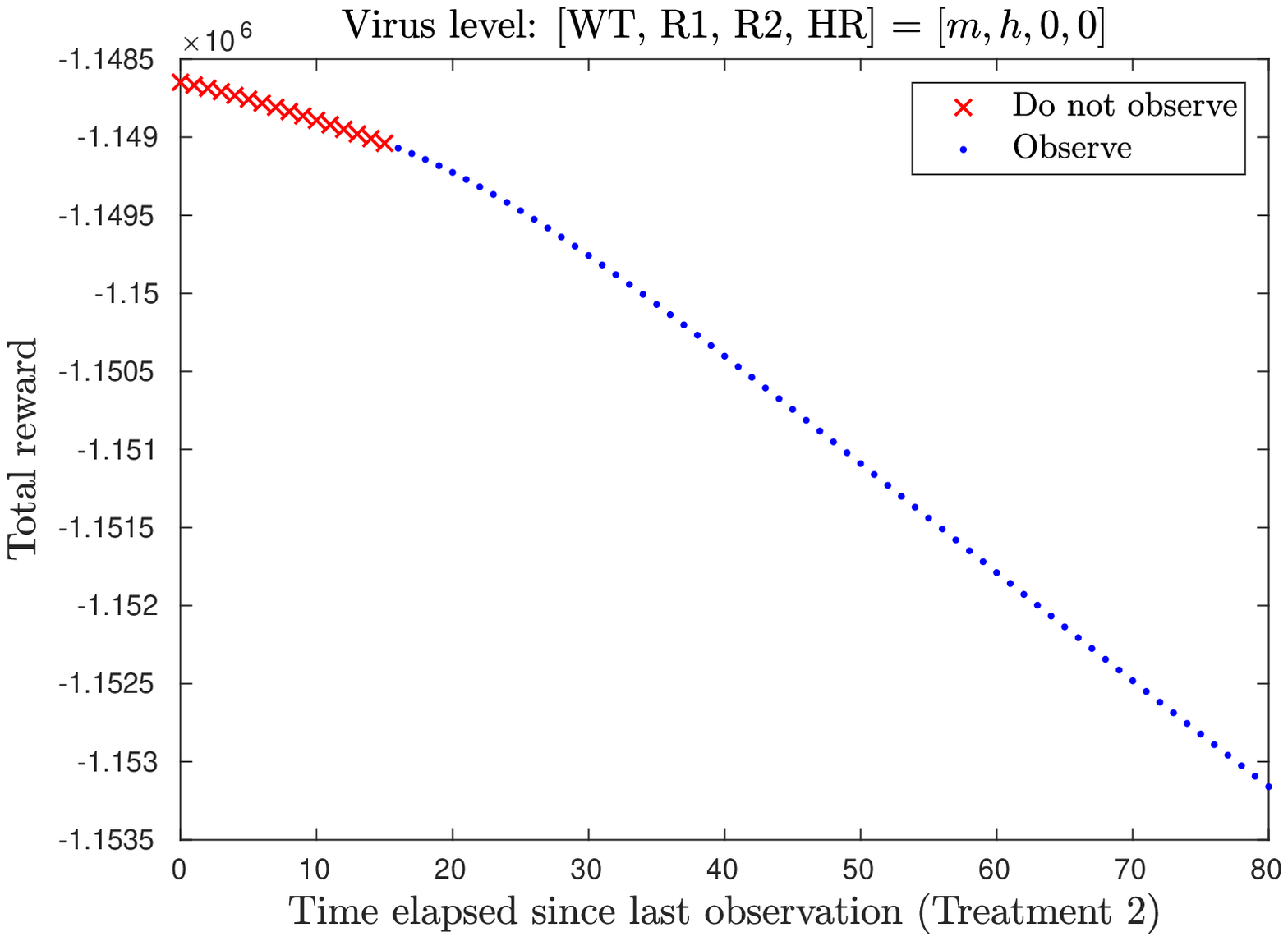}
\end{minipage}
\begin{minipage}{.45\textwidth}
  \centering
  \includegraphics[width=7cm]{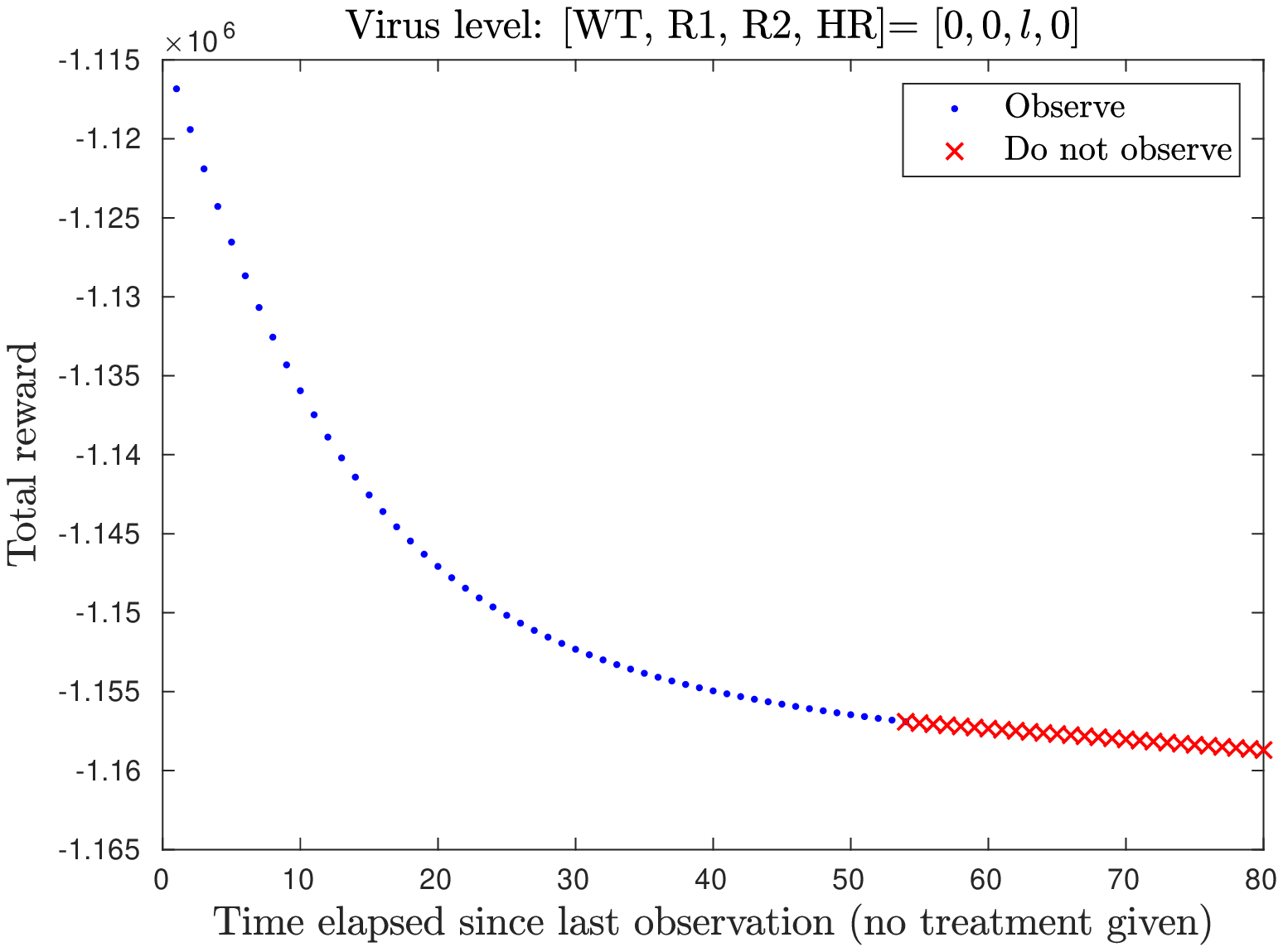}
\end{minipage}
\begin{minipage}{.45\textwidth}
  \centering
  \includegraphics[width=7cm]{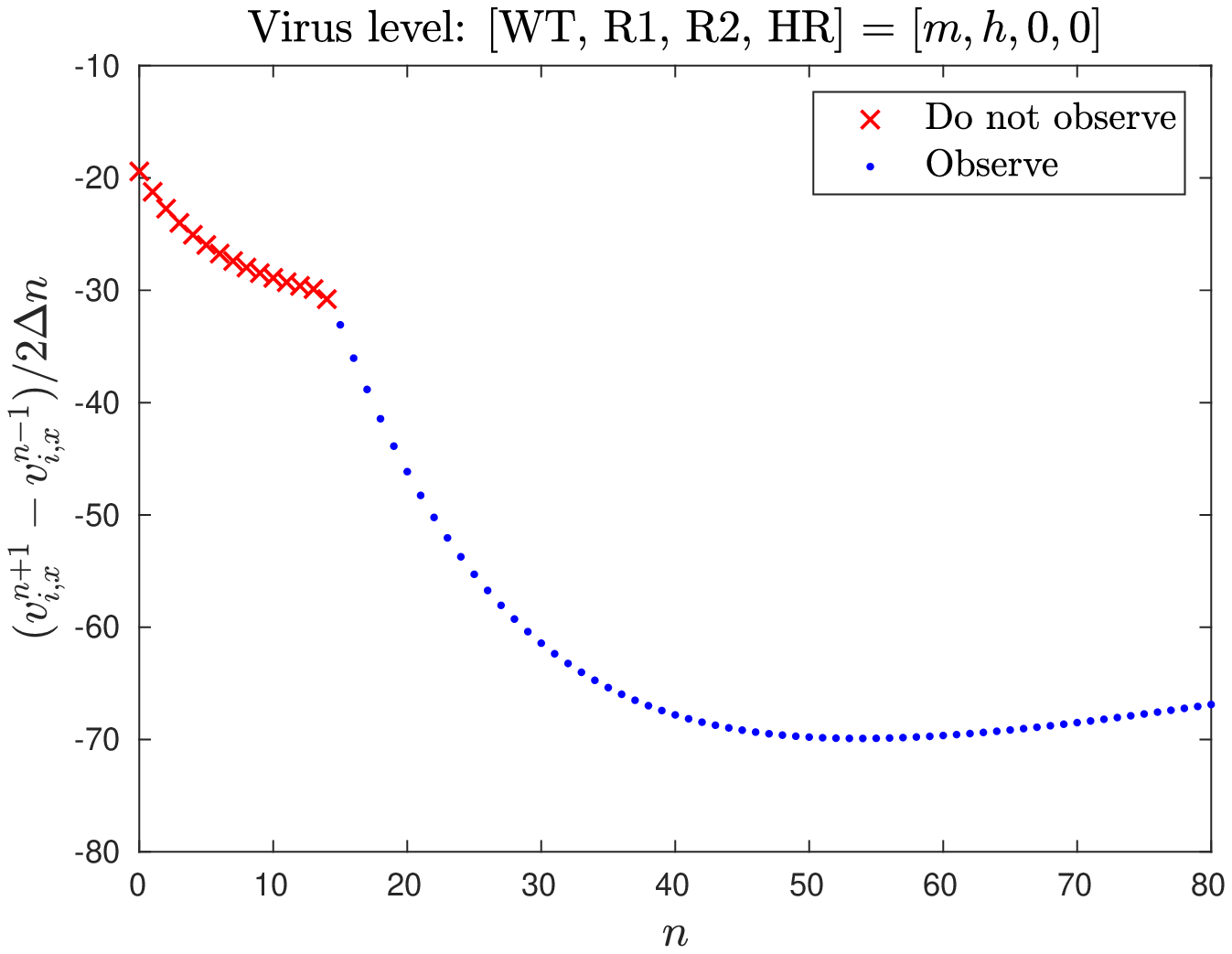}
\end{minipage}
\begin{minipage}{.45\textwidth}
  \centering
  \includegraphics[width=7cm]{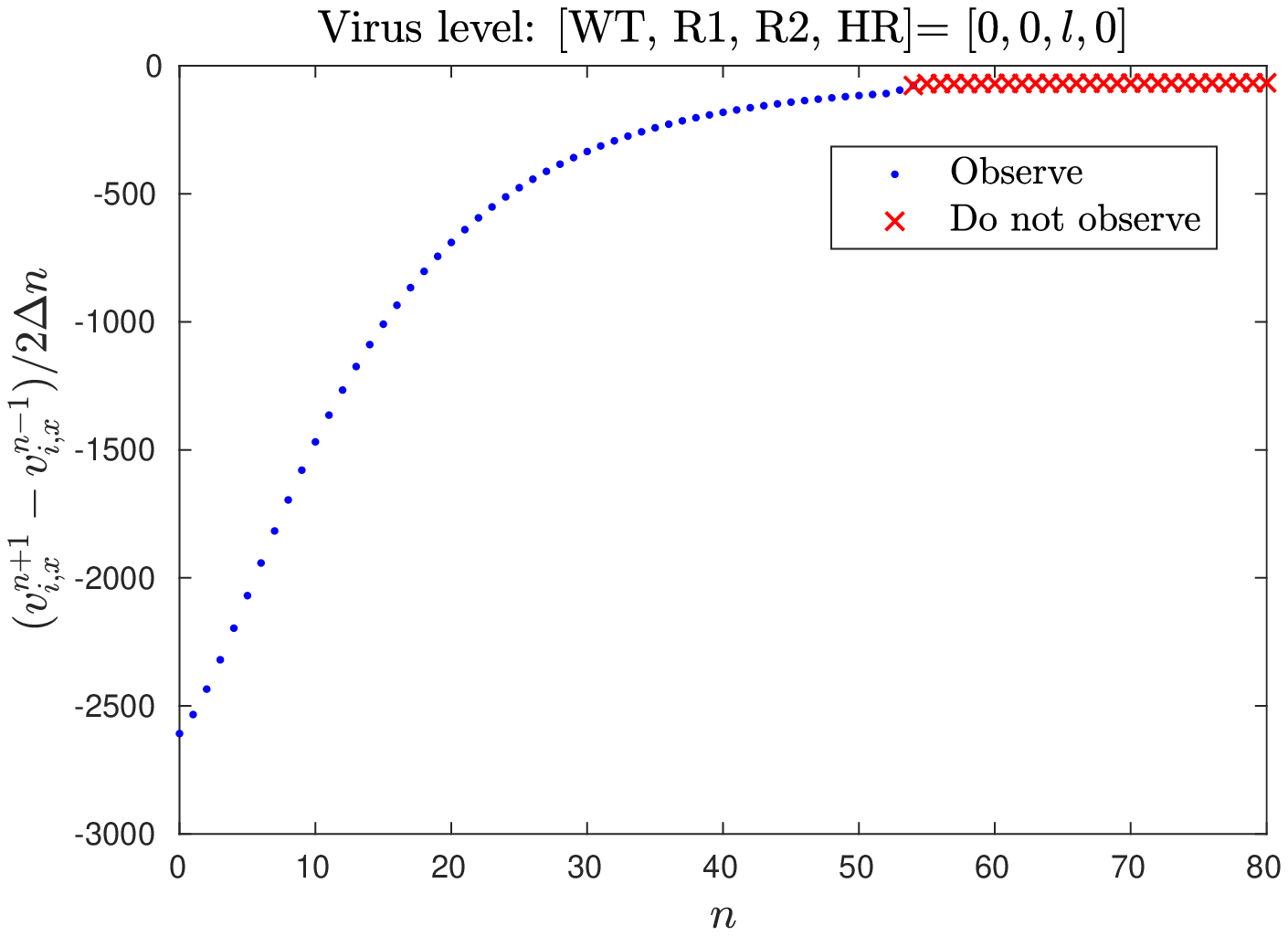}
\end{minipage}
\caption{The value function exhibits two qualitatively different decay modes depending on the starting states $x$. Left: a stable condition with the correct treatment. Right: a worse condition with no treatment. The top row shows the mappings $n\mapsto v^{n}_{i,x}$. The bottom row plots the corresponding central finite difference terms.}
\label{HIV_optimal_vs_sub}
\end{figure}

\section{Observation cost with parameter uncertainty}\label{section_parameter_uncertainty}

For this final section, we consider an extrension of the OCM with parameter uncertainty in the dynamics of the Markov chain. We shall adopt the approach of Bayesian adaptive control. Suppose that the transition kernel $p$ now depends on an unknown parameter $\theta \in \Theta$, where $\Theta$ denotes a finite parameter space. We write $p_{\theta}(\cdot \mid x ,a)$ for a fixed value of $\theta$. To consider a Markov system for the problem, we take $\mathcal{X} \times \Theta$ as our underlying space, with transition kernel
\begin{align*}
    \mathfrak{p}((x^{\prime}, \theta^{\prime}) \mid (x, \theta), a) \coloneqq \mathbbm{1}_{\{\theta = \theta^{\prime}\}} p_{\theta}(x^{\prime} \mid x ,a),\quad (x, \theta), (x^{\prime}, \theta^{\prime}) \in \mathcal{X} \times \Theta,\ a \in A.
\end{align*}
For each value of $\theta \in \Theta$, we associate a probability measure $p^{\theta}_0 \in \Delta_{\mathcal{X}}$, so that the initial distribution is given by
\begin{align*}
    \mathfrak{p}_0(x, \theta) = \sum_{\theta \in \Theta} p^{\theta}_0(x) \rho_0(\theta)
\end{align*}
for some $\rho_0 \in \Delta_{\Theta}$. Given the half-step construction as laid out in the beginning of this section, the OCM with parameter uncertainty can be written as a POMDP $\langle \mathcal{X} \times \Theta, \mathcal{X}_{\varnothing}, \mathcal{A}, \mathfrak{p}, \mathfrak{p}_0, \mathfrak{q}, \mathfrak{q}_0, r_{\mathrm{obs}} \rangle$ over the timescale $\frac{1}{2}\N$, where the domain of the transition kernel $\mathfrak{p}$ is extended to $(\mathcal{X} \times \Theta) \times \mathcal{A}$ by defining
    \begin{align*}
        \mathfrak{p}(\cdot \mid x, \theta, i) = \delta_{(x, \theta)}(\cdot),\quad i \in \mathcal{I},
    \end{align*}
and the observation kernel $\mathfrak{q}$ is now defined on $\mathcal{A} \times (\mathcal{X}\times \Theta)$, with
     \begin{align*}
        \mathfrak{q}(\cdot \mid a, x, \theta) &= \delta_{\varnothing}(\cdot),  \quad a \in A, \\
         \mathfrak{q}(\cdot \mid i, x, \theta) &= i\cdot\delta_{x}(\cdot) + (1-i)\delta_{\varnothing}(\cdot), \quad i \in \mathcal{I}.     
     \end{align*}
The initial observation kernel $\mathfrak{q}_0$ will be taken as $\mathfrak{q}_0(\cdot\mid x, \theta) = \delta_x(\cdot)$ (see \Cref{rem:initial_kernels}). The set of admissible policies, denoted by $\Pi^{\Theta}_{\mathrm{obs}}$ in this case, can be established analogously as in \Cref{defn:policy_ocm}. Denote the canonical measure here by $\PP^{\pi}_{\mathfrak{p}_0}$, under which $\theta$ can be considered as a constant process, i.e. $\theta_{n+1} \equiv \theta_n$ with $\theta_0 \sim \rho_0 \in \Delta_{\Theta}$, and $\rho_0$ can be interpreted as a prior estimate for $\theta$. 

When considering the belief MDP for this problem, the observable sequence at time $t$ remains as previously, 
\begin{align*}
    h_t = (\bar{x}_0, \pi_0, \ldots, \bar{x}_{t-1/2}, \pi_{t-1/2}, \bar{x}_t) \in \mathcal{H}_t.
\end{align*}
Then, the belief state $\PP^{\pi}_{\mathfrak{p}_0}(x_t, \theta_t \mid h_t)$ can be decomposed as
\begin{align}\label{eq:belief_param_decompose}
    \PP^{\pi}_{\mathfrak{p}_0}(x_t, \theta_t \mid h_t) & = \sum_{\theta \in \Theta} \PP^{\pi}_{\mathfrak{p}_0}(x_t \mid \theta_t , h_t)\ \PP^{\pi}_{p_0}(\theta_t \mid h_t).
\end{align}
For a fixed value of $\theta$, ${\PP^{\pi}_{\mathfrak{p}_0}(x_t \mid \theta_t , h_t)}$ has a finite dimensional characterisation by the Markov property. As in the previous section, we denote this characterisation by $y=(y_t)_t$, which is a tuple given by
\begin{align*}
    y_t \coloneqq 
    \begin{cases}
    (k, x_k, a_k),&\ \mbox{if $k = \argmax_{n\leq t} \{i_{n} = 1 \} \neq t$ },\\
    (t, x_t, \varnothing),&\ \mbox{otherwise}.
    \end{cases}
\end{align*}
The second term on the right hand side of \eqref{eq:belief_param_decompose}, $\PP^{\pi}_{\mathfrak{p}_0}(\theta_t \mid h_t)$ can be interpreted as the posterior distribution of $\theta$ at time $t$. Denote this term by $\rho_t(\theta_t)$. Note that $\rho_t$ can be computed online via the classical Bayes' Theorem,
\begin{align}\label{bayes}
    \rho_{t}(\theta) & = \frac{\PP^{\pi}_{\mathfrak{p}_0}(\bar{x}_{t} \mid \theta, h_{t-\frac{1}{2}}, \pi_{t-\frac{1}{2}})}{\sum_{\pr{\theta} \in \Theta}\PP^{\pi}_{\mathfrak{p}_0}(\bar{x}_{t} \mid \pr{\theta}, h_{t-\frac{1}{2}}, \pi_{t-\frac{1}{2}})\ \rho_{t-\frac{1}{2}}(\pr{\theta})}\ \rho_{t-\frac{1}{2}}(\theta).
\end{align}
In the OCM, observations only occur at the half steps, therefore we have in fact $\rho_{n} \equiv \rho_{n-1/2}$ for $n \in \N$. Thus, the update \eqref{bayes} can be reduced to
\begin{align}
    \rho_{n}(\theta) & = \frac{\PP^{\pi}_{\mathfrak{p}_0}(\bar{x}_{n-\frac{1}{2}} \mid \theta, y_{n-1}, i_{n-1})}{\sum_{\pr{\theta} \in \Theta}\PP^{\pi}_{\mathfrak{p}_0}(\bar{x}_{n-\frac{1}{2}} \mid \pr{\theta}, y_{n-1}, i_{n-1})\ \rho_{n-1}(\pr{\theta})}\ \rho_{n-1}(\theta) \nonumber \\
    & \eqqcolon U(\rho_{n-1}, y_{n-1}, i_{n-1}), \quad n \in \N.\label{bayes:integer}
\end{align}
The belief state at time $t$ can now be represented by $ y_t \in \mathcal{Y}$ and $\rho_t \in \Delta_{\Theta}$, with its transitions given by the kernel $\pr{\mathfrak{p}}= (\pr{\mathfrak{p}}_t)_{t \in \frac{1}{2}\N}$ on $\mathcal{Y} \times \Delta_{\Theta}$, given $(\mathcal{Y} \times \Delta_{\Theta})  \times\mathcal{A}$:
\begin{align*}
    \pr{\mathfrak{p}}_n( \pr{y}, \pr{\rho} \mid y, \rho, i) &= i \cdot \sum_{\theta \in \Theta}p^{(n-k)}_{\theta}(\hat{x} \mid x, a)\rho(\theta)\ \mathbbm{1}_{\{\pr{y} = (n, \hat{x}, \varnothing),\ \pr{\rho} = U(\rho, y, i) \}}\\
    &\qquad + (1-i) \mathbbm{1}_{\{\hat{y}= y,\ \pr{\rho} = \rho\}},\ && y = (k,x,a),\\
    \pr{\mathfrak{p}}_{n + \frac{1}{2}}(\pr{y}, \pr{\rho} \mid y, \rho, a^{\prime}) &= \mathbbm{1}_{\{\pr{y} = (k, x, a^{\prime}),\ \pr{a} = a,\ \pr{\rho} = \rho\}},\ && y = (k,x,a),\\
    \pr{\mathfrak{p}}_{n + \frac{1}{2}}(\pr{y}, \pr{\rho} \mid y, \rho, a^{\prime}) &= \mathbbm{1}_{\{\hat{y} = (n, x, a^{\prime}),\ \pr{\rho} = \rho\}},\ && y = (n ,x ,\varnothing).
\end{align*}
As before, we shall not distinguish between policies for the POMDP and policies for the belief state MDP. For the finite horizon problem, let $y=(k,x,a) \in \mathcal{Y}$, $\rho \in \Delta_{\Theta}$, and consider
\begin{equation}\label{reward_func_bayes}
    J(t, y, \rho, \pi) = \E^{\pi}_{y,\rho}\left[ \sum^{2N}_{n=t} r^{\rho}_{n/2}\left(y_{\frac{n}{2}}, \pi_{\frac{n}{2}}\right) \right],\quad 0 \leq t \leq 2N,\ \pi \in \Pi_{\mathrm{obs}},
\end{equation}
where for $y = (k,x,a)$, $i \in \mathcal{I}$, $a^{\prime} \in A$, $n \in \N$,
    \begin{align*}
    r^{\rho}_n(y, i) &= - i \cdot c_{\mathrm{obs}},\\
    r^{\rho}_{n+\frac{1}{2}}(y, a^{\prime})& = \sum_{\theta \in \Theta}\sum_{x^{\prime} \in \mathcal{X}} r(x^{\prime}, a^{\prime}) p^{(n-k)}_{\theta}(x^{\prime} \mid x, a) \rho (\theta),
    \end{align*}
    and for $y = (n,x,\varnothing)$,
    \begin{align*}
        r^{\rho}_{n+\frac{1}{2}}(y, a^{\prime})& = r(x, a^{\prime}).
    \end{align*}
The value function is
\begin{equation}\label{value_bayes}
    v(t, y, \rho) = \sup_{\pi \in \Pi_{\mathrm{obs}}} J(t, y,\rho, \pi).
\end{equation}

As in the previous case without parameter uncertainty, the dynamic programming equation can be reduced to only the integer time steps. We state the optimality equation for the observation cost model under parameter uncertainty below.

\begin{prop}For $y=(k,x,a) \in \mathcal{Y}$ and $\rho \in \Delta_{\Theta}$, the value function \eqref{value_bayes} satisfies the following equation:
\begin{align}\label{finite_dpp_bayes}
    v(n, (k,x,a), \rho) = &\max \Bigg\{ v(n+1,  (k,x,a), \rho)  +  \sum_{\substack{\theta \in \Theta \\x^{\prime} \in \mathcal{X}}} p^{(n-k)}_{\theta}(x^{\prime} \mid x, a) r(x^{\prime},a)\rho(\theta) , \nonumber\\
    &   \sum_{\substack{\theta \in \Theta \\x^{\prime} \in \mathcal{X} }} p^{(n-k)}_{\theta}(x^{\prime} \mid x, a) \rho(\theta) \  \Big[   \max_{a^{\prime} \in A} \Big(v(n+1, (n, x^{\prime}, a^{\prime}), \rho^{\prime}) + r(x^{\prime},a^{\prime})\Big)  \Big] - c_{\mathrm{obs}} \Bigg\},
\end{align}
where $\rho^{\prime}=U(\rho, y, 1) $ as in \eqref{bayes:integer}.
\end{prop}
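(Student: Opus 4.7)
The plan is to mirror the proof of the corresponding proposition for the non-Bayesian finite horizon case, applying the standard dynamic programming principle (DPP) on the belief MDP $\langle \mathcal{Y} \times \Delta_{\Theta}, \mathcal{A}, \pr{\mathfrak{p}}, r^{\rho} \rangle$ and then combining the two half steps into a single integer step. Since the belief MDP was constructed from a POMDP via the standard disintegration described in \Cref{sub:intro_notation}, policies Markovian in $(y,\rho)$ are optimal, and the DPP gives, for any $t \in \frac{1}{2}\N$,
\begin{equation*}
    v(t, y, \rho) = \sup_{\pi \in \Pi^{\Theta}_{\mathrm{obs}}}\left\{ r^{\rho}_{t}(y,\pi_t) + \E^{\pi}_{y,\rho}\left[v\left(t+\tfrac{1}{2},\, y_{t+\frac{1}{2}},\, \rho_{t+\frac{1}{2}}\right)\right]\right\}.
\end{equation*}

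First I would expand this at the integer time $n$ with $y=(k,x,a)$. The admissible actions at integer times are $i \in \mathcal{I} = \{0,1\}$, and according to $\pr{\mathfrak{p}}_n$, the belief triple remains unchanged if $i=0$ (no observation) and transitions to $(n,x',\varnothing,\rho')$ with $x'$ drawn from the $\rho$-mixture of $p_\theta^{(n-k)}(\cdot \mid x,a)$ and $\rho'=U(\rho,y,1)$ the posterior if $i=1$. Since $r^{\rho}_n(y,i) = -i \cdot c_{\mathrm{obs}}$, this gives
\begin{equation*}
    v(n,(k,x,a),\rho) = \max\Big\{v\big(n+\tfrac{1}{2},(k,x,a),\rho\big),\ -c_{\mathrm{obs}} + \sum_{\theta,x'}p^{(n-k)}_{\theta}(x'\mid x,a)\rho(\theta)\, v\big(n+\tfrac{1}{2},(n,x',\varnothing),\rho'\big)\Big\}.
\end{equation*}

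Next I would apply the DPP at the half step $n+\frac{1}{2}$. For the non-observation branch, the admissible action set is $\{a\}$, and the deterministic transition sends $(k,x,a)$ to itself while paying reward $r^{\rho}_{n+1/2}((k,x,a),a) = \sum_{\theta,x'} r(x',a) p_\theta^{(n-k)}(x'\mid x,a)\rho(\theta)$, giving exactly the first element inside the $\max$ of \eqref{finite_dpp_bayes}. For the observation branch, at $(n,x',\varnothing,\rho')$ the admissible set is all of $A$, the transition is deterministic from $(n,x',\varnothing)$ to $(n,x',a')$, and $r^{\rho'}_{n+1/2}((n,x',\varnothing),a') = r(x',a')$, yielding
\begin{equation*}
    v\big(n+\tfrac{1}{2},(n,x',\varnothing),\rho'\big) = \max_{a'\in A}\Big\{r(x',a') + v\big(n+1,(n,x',a'),\rho'\big)\Big\}.
\end{equation*}
Substituting these two identities into the previous display and grouping the $c_{\mathrm{obs}}$ outside the sum reproduces \eqref{finite_dpp_bayes}.

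The only nontrivial point is checking the disintegration of the joint transition that is hidden inside the above step, namely that conditional on $i=1$ the observation $x'$ and the updated posterior $\rho'$ are jointly distributed so that $\rho'$ is precisely the deterministic map $U(\rho,y,1)$ of the observation (as a function of $x'$) given by \eqref{bayes:integer}. This is exactly the content of the Bayes update, together with the POMDP belief-state derivation reviewed in \Cref{sub:intro_notation}: the posterior at time $n$ after observing $\bar{x}_{n-1/2}=x'$ coincides with the belief-MDP state obtained by disintegrating the joint law of $(x_n,\theta_n,\bar{x}_{n-1/2})$ given the observable history. Once this identification is in hand, the rest of the argument is a routine combination of the two Bellman steps, and terminal conditions at $t=2N$ propagate unchanged from those of the non-Bayesian case.
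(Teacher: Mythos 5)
Your proposal is correct and follows exactly the route the paper intends: the paper states this proposition without a written proof, remarking only that it reduces to integer steps "as in the previous case without parameter uncertainty," and your argument is precisely the paper's proof of the non-Bayesian finite-horizon QVI (DPP at the integer inspection step, then at the half action step, then substitution) carried out on the belief state augmented by the posterior $\rho$. Your explicit verification that, conditional on $i=1$, the observation $x'$ and the posterior $\rho' = U(\rho, y, 1)$ are coupled through the disintegration of the kernel $\pr{\mathfrak{p}}_n$ is the one point the paper leaves implicit, and you handle it correctly.
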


For the infinite horizon case, a similar stationary argument leads us to the objective function and value function:
\begin{align}
    J(y, \rho, \pi) &= \E^{\pi}\left[ \sum^{\infty}_{n=0} \gamma^{\floor{\frac{n}{2}}} r^{\rho}\left(y_{\frac{n}{2}}, \pi_{\frac{n}{2}}\right) \right],\ y \in \mathcal{Y},\ \rho \in \Delta_{\Theta},\ \pi \in \Pi_{\mathrm{obs}}, \nonumber \\
    v(y, \rho) &= \sup_{\pi \in \Pi_{\mathrm{obs}}} J(y,\rho, \pi). \label{value_bayes_inf}
\end{align}

\begin{prop}\label{prop_infty_bayes}
For $y=(n,x,a) \in \mathcal{Y}$ and $\rho \in \Delta_{\Theta}$, the value function \eqref{value_bayes_inf} satisfies the following equation:
\begin{align}\label{infty_horizon_bayes}
    v((n,x,a), \rho) = &\max \Bigg\{  \gamma v((n+1,x,a), \rho) + \sum_{\substack{\theta \in \Theta \\x^{\prime} \in \mathcal{X}}} p^{(n)}_{\theta}(x^{\prime} \mid x, a) r(x^{\prime},a) \rho(\theta), \nonumber\\
    & \sum_{\substack{\theta \in \Theta \\x^{\prime} \in \mathcal{X}}} p^{(n)}_{\theta}(x^{\prime} \mid x, a) \rho(\theta) \  \Big[  \max_{a^{\prime} \in A} \Big( \gamma v((1, x^{\prime}, a^{\prime}), \rho^{\prime}) + r(x^{\prime},a^{\prime}) \Big) \Big]  - c_{\mathrm{obs}}  \Bigg\},
\end{align}
where $\rho^{\prime}=U(\rho, y, 1) $ as in \eqref{bayes:integer}.
\end{prop}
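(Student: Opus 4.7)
The plan is to apply the standard dynamic programming principle to the belief MDP $\langle \mathcal{Y} \times \Delta_{\Theta}, \mathcal{A}, \pr{\mathfrak{p}}, r^{\rho}\rangle$ on the half-step timescale $\frac{1}{2}\N$ and then collapse each pair of consecutive half-steps (an inspection decision followed by an action decision) into a single equation at integer elapsed times, exactly as in the proof of the finite-horizon DPP and the stationary analogue in \Cref{prop:inf_dpp}. The only genuinely new ingredient is that the posterior $\rho$ must be transported along the system and updated by the Bayes map $U(\rho, y, 1)$ from \eqref{bayes:integer} at every inspection.

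First I would write down the half-step DPP
\begin{align*}
v(y,\rho) = \sup_{\alpha \in \mathcal{A}(t)}\Big\{ r^{\rho}_{t}(y, \alpha) + \gamma^{\lfloor t + 1/2\rfloor - \lfloor t\rfloor}\,\E^{\alpha}\!\left[v\big(y',\rho'\big)\right]\Big\},
\end{align*}
valid on $\frac{1}{2}\N$, and instantiate it twice: once at an integer elapsed time (inspection choice $i \in \mathcal{I}$, no discount advance) and once at the following half-integer time (action choice $a' \in A$, with discount factor $\gamma$). At the integer stage with $y = (n,x,a)$, the option $i=0$ incurs zero reward and leaves $(y,\rho)$ unchanged, whereas $i=1$ pays $-c_{\mathrm{obs}}$, draws $x' \sim \sum_{\theta} p^{(n)}_{\theta}(\,\cdot\,\mid x,a)\rho(\theta)$, sends $y \mapsto (n,x',\varnothing)$, and updates $\rho \mapsto \rho' = U(\rho, (n,x,a), 1)$. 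At the following half-integer stage, the admissibility constraint encoded in $\Pi^{\Theta}_{\mathrm{obs}}$ forces $a' = a$ when $y = (n,x,a)$, producing the expected reward $\sum_{\theta,x'} p^{(n)}_{\theta}(x'\mid x, a) r(x',a)\rho(\theta)$ and discounted continuation $\gamma\, v((n+1,x,a), \rho)$; when $y = (n, x', \varnothing)$ the action $a'$ is free, the reward is $r(x',a')$, and the continuation is $\gamma\, v((1,x',a'), \rho')$.

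Composing the two stages and using the explicit form of $\pr{\mathfrak{p}}$, the $i=0$ branch collapses to $\gamma v((n+1,x,a),\rho) + \sum_{\theta,x'} p^{(n)}_{\theta}(x'\mid x,a)\, r(x',a)\,\rho(\theta)$, and the $i=1$ branch collapses to $\sum_{\theta,x'} p^{(n)}_{\theta}(x'\mid x,a)\rho(\theta)\left[\max_{a' \in A}\big(\gamma v((1,x',a'),\rho') + r(x',a')\big)\right] - c_{\mathrm{obs}}$. Taking the maximum over $i \in \{0,1\}$ gives \eqref{infty_horizon_bayes}. Stationarity of the value function in elapsed time, which allows us to drop the absolute time index in the infinite-horizon setting, is inherited from the observation used for \Cref{prop:inf_dpp} that $\pr{\mathfrak{p}}$ depends on $n$ and $k$ only through the difference $n-k$; since the Bayes update $U$ depends only on $(\rho, y, i)$ and not on absolute time, this stationarity survives the enlargement by $\rho$.

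The main point requiring care is the placement of $\rho'$ in the inspection branch: because $\rho' = U(\rho,(n,x,a),1)$ depends on the realised observation $x'$ through the likelihood ratio in \eqref{bayes:integer}, it must remain inside the expectation over $x'$ and cannot be factored out of the sum. This is precisely where the decomposition \eqref{eq:belief_param_decompose} of the belief state into the $\theta$-marginal times the conditional law of $x$ given $\theta$ is used, and where one needs to verify that the product weight $p^{(n)}_{\theta}(x' \mid x, a)\rho(\theta)$ is exactly the joint predictive mass used to normalise $\rho'$; everything else is a routine combination of two Bellman steps, with no measurability or integrability difficulties since $\mathcal{X}$, $A$, and $\Theta$ are all finite.
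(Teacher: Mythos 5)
Your proposal is correct and follows exactly the route the paper intends: the paper states this proposition without proof, and your argument is the same two-stage half-step dynamic programming collapse used in the paper's proved finite-horizon proposition, combined with the stationarity relabelling from \Cref{sub:infinite_horizon} (the kernel depending on $n$ and $k$ only through $n-k$) and the Bayes map $U$ carried along the belief component. Your observation that $\rho^{\prime}$ depends on the realised $x^{\prime}$ and must stay inside the sum over $x^{\prime}$ is precisely the point the paper's notation in \eqref{infty_horizon_bayes} leaves implicit, so no gap remains.
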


It is worth noting that both \eqref{finite_dpp_bayes} and \eqref{infty_horizon_bayes} are MDPs over the augmented space ${\mathcal{Y} \times \Delta_{\Theta}}$. The inclusion of the simplex $\Delta_{\Theta}$ makes the MDP non-discrete. For computation, one would have to approximate the solution, either via computing a discrete MDP on a finite grid on ${\mathcal{Y} \times \Delta_{\Theta}}$, or via functional approximation methods such as the use of neural networks on larger scale problems. We refer the reader to the textbook \cite{Kushner_dupuis} and survey paper \cite{rust1996numerical} for a comprehensive review of choosing appropriate approximating grids. Then, given a finite grid $\mathbb{G} = \{s_1, \ldots ,s_G\}$ on the simplex $\Delta_{\Theta}$, one can define the approximating transition kernels by
\begin{align*}
    \mathfrak{p}_{\mathbb{G}}(\pr{y}, s_i \mid y, s_j, \pi) = \frac{\pr{\mathfrak{p}}(\pr{y}, s_i \mid y, s_j,a)}{\sum_{i=1}^G \pr{\mathfrak{p}}(\pr{y}, s_i \mid y, s_j, a)},\quad y, \pr{y} \in \mathcal{Y},\ s_i, s_j \in \mathbb{G},\ \pi \in \mathcal{A},
\end{align*}
so that one solves the approximating finite MDP on $\mathcal{Y} \times \mathbb{G}$ instead. For the QVIs resulting from the observation cost problems, we can solve the MDPs by penalisation, as detailed in \Cref{section_comparison}.
\subsection{Random walk with drift with parameter uncertainty}\label{subsection:bayesian}

In this subsection, we consider a random walk with drift, as set up in \Cref{subsection_random_walk}, but with the additional assumption that the true value of the drift parameter $\theta$ is unknown to the user. To avoid complications with boundary conditions and infinite domains, we shall only consider the finite horizon problem. Recall that for a fixed value of $\theta$ and constant action, the $n$-step transition probabilities are given by
\begin{align*}
    p^{(n)}_{\theta}(x^{\prime}\mid x, +1) = \binom{n}{k} \theta^k (1-\theta)^{n-k}, \
    p^{(n)}_{\theta}(x^{\prime}\mid x, -1)  = \binom{n}{k} \theta^{n-k} (1-\theta)^{k},\quad \pr{x} \in \mathcal{S}^x_n,
\end{align*}
where $\mathcal{S}^x_n$ is the set of states that can be reached from $x$ after $n$ units of time, and $k = \frac{1}{2}(n +x^{\prime}-x)$. We choose the prior from a family of beta distributions to obtain conjugacy in the parameter distributions. The posterior can be updated as follows. Suppose the prior $\rho_0 \sim \mathrm{Beta}(\alpha,\beta)$ and the next observation occurs at time $n$ at a state $\pr{x} \in \mathcal{X}$. Then a standard calculation shows that
\begin{align*}
    \int_{\Theta} p^{(n)}_{\theta}(x^{\prime}\mid x, +1)\ \rho_0(d\theta)  = g(k \mid n, \alpha, \beta), \
    \int_{\Theta} p^{(n)}_{\theta}(x^{\prime}\mid x, -1)\ \rho_0(d\theta)  = g(n-k \mid n, \alpha, \beta),
\end{align*}
where
\begin{equation*}
    g(k \mid n, \alpha, \beta) = \binom{n}{k} \frac{B(k + \alpha, n -k + \beta)}{B(\alpha,\beta)},\ k = \frac{1}{2}(n + \pr{x} - x),
\end{equation*}
and $g$ is the probability mass function of the Beta-binomial distribution, $B(\alpha,\beta)$ is the Beta function. The posterior distribution is then given by
\begin{align*}
   \rho_n \sim \begin{cases}
   \mathrm{Beta}(\alpha + k, \beta + n - k), & a_{0} = + 1,\\
   \mathrm{Beta}(\alpha + n - k, \beta + k), & a_{0} = - 1.\\
   \end{cases}
\end{align*}

The distributions $\{\rho_n\}$ can then be described by a finite number of values over a finite time horizon. Hence we write $ v(n,(k,x,a),(\alpha,\beta))$ for $v(n,y,\rho)$. Let us first consider the same reward function $r(x,a) = r(x) = \frac{1}{\lvert x \rvert + 1}$ as in the previous section. We can write the QVI \eqref{finite_dpp_bayes} as
\begin{align*}
    & v(n, (k,x,a), (\alpha, \beta)) \nonumber \\
    =\ &\max \Bigg\{ v(n+1,  (k,x,a),  (\alpha, \beta))  +  \sum_{x^{\prime} \in \mathcal{X}} \pr{g}(\pr{x}, x \mid n-k, \alpha, \beta)\ r(\pr{x}), \nonumber \\
    &\   \sum_{x^{\prime} \in \mathcal{X}}  \pr{g}(\pr{x}, x \mid n-k, \alpha, \beta)  \Big[   \max_{a^{\prime} \in A} \Big(v(n+1, (n, x^{\prime}, a^{\prime}), (\pr{\alpha}, \pr{\beta})) + r(x^{\prime})\Big)  \Big] - c_{\mathrm{obs}} \Bigg\},
\end{align*}
where we define $\pr{g}(\pr{x}, x \mid n, \alpha, \beta)\coloneqq g((n-\pr{x}+x)/2 \mid k, \alpha, \beta)$. The terminal conditions are
\begin{equation*}
    v(N, (N-k, x, a), (\alpha, \beta)) = \sum_{x^{\prime} \in \mathcal{S}^x_k} \pr{g}(\pr{x}, x \mid k, \alpha, \beta)\ r(x^{\prime}), \quad k<N.
\end{equation*}

For our experiment, we set the true value of $\theta = 0.3$ and a time horizon of $N = 50$. \Cref{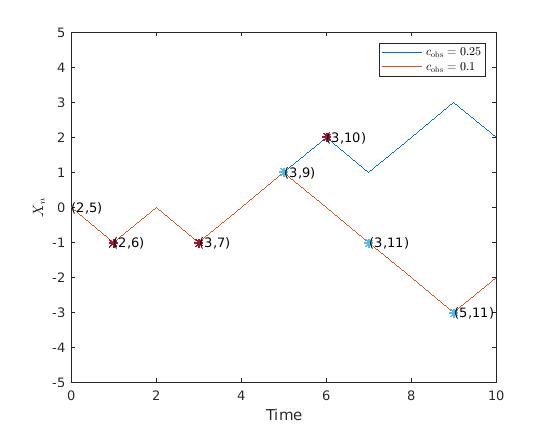} illustrates a sample realisation of an optimal trajectory, given a prior of $\mathrm{Beta}(5,2)$, as well as the evolution of the estimate over $\theta$ over time.

\begin{figure}[t!]
\centering
  \includegraphics[width = 14cm]{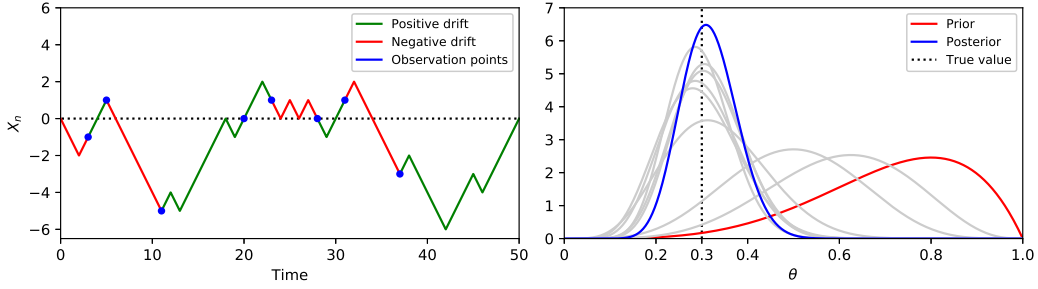}
  \caption{Left: sample realisation of the controlled random walk along the optimal trajectory. Right: prior and posterior distribution of $\theta$; the grey lines indicate `intermediate posteriors' obtained from earlier observations.}
  \label{bayesian_graph.jpg}
\end{figure}

We consider three different choices of $(\alpha, \beta)$ for the prior $\rho_0$ as well as varying the observation cost. For each parameter combination we compute the optimal policy and compare their respective performances across $5000$ sampled trajectories. A typical criteria of measuring the performance of the policy is to examine its regret, defined as
\begin{align*}
    \mathrm{reg}(N,\pi) =  J^{\theta^{*}}_0(\pi^{*}) - J^N(\pi),
\end{align*}
where $J^{\theta^{*}}_0(\pi^{*}) = \E^{\pi^{*}}[\sum r(x,a)]$ is the reward functional with no observation cost, with known parameter ${\theta^{*}}$ and optimal policy ${\pi^{*}}$, and $J^N(\pi)$ is the reward functional \eqref{reward_func_bayes} under a policy ${\pi \in \Pi_{\mathrm{obs}}}$. In this case we consider $\pi$ to be the optimal policy under observation costs and parameter uncertainty. The regret is therefore the cumulative sum of the suboptimal gap from the optimal policy. For the observation cost problem, the control between observations are constant and therefore suboptimal in general, as such we do not expect the regret to achieve asymptotically sublinear growth. Instead, we consider the following alternative criteria:
\begin{equation*}
    \mathrm{reg}_{c_{\mathrm{obs}}}(N, \pi) =  J^{\theta^{*}}_{c_{\mathrm{obs}}}(\pi^{*}) - J^N(\pi),
\end{equation*}
where here $J^{\theta^{*}}_{c_{\mathrm{obs}}}$ is the reward functional with known parameter $\theta^{*}$ and observation cost $c_{\mathrm{obs}}$, so that $\mathrm{reg}_{c_{\mathrm{obs}}}$ measures the contribution of the regret that arises from parameter uncertainty. On the left of \Cref{regret_graph}, we show the overall regret for varying the observation cost for a fixed prior, and on the right, $\mathrm{reg}_{c_{\mathrm{obs}}}$ is plotted with a fixed observation cost of $c_{\mathrm{obs}}=0.1$ across different initial priors $\rho_0$. As expected, the regret is generally higher when the prior estimate $\rho_0$ is less accurate, or when a larger $c_{\mathrm{obs}}$ value is used. Moreover the regret grows in a rather linear fashion. However, when examining the graph involving $\mathrm{reg}_{c_{\mathrm{obs}}}$ on the right side, we empirically observe sublinear growth. This can be interpreted as a gradual learning of the unknown parameters, despite the fact that observations only arrive in intervals. The results suggests that $\mathrm{reg}_{c_{\mathrm{obs}}}$ can be used as an alternative notion to capture the learning rate in problems involving observation costs, which we see as a possible direction for future analysis.

\begin{figure}[t!]
\centering
  \includegraphics[width = 14cm]{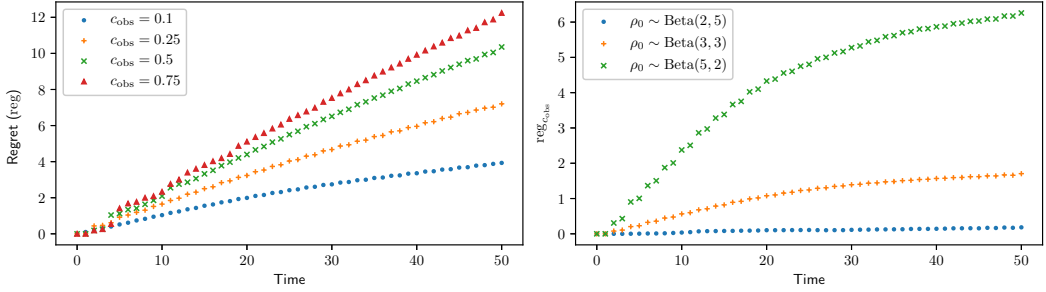}
  \caption{Left: regret over time for $\rho_0 \sim \mathrm{Beta}(3,3)$ for different values of $c_{\mathrm{obs}}$. Right: the growth of $\mathrm{reg}_{c_{\mathrm{obs}}}$ for fixed $c_{\mathrm{obs}}=0.1$ and different initial priors $\rho_0$.}
  \label{regret_graph}
\end{figure}

To demonstrate the effects of observation cost and prior estimates on the number of observations, we consider an alternative reward function, given by
\begin{align*}
r(x,a) = r(x) = \begin{cases}
2 & x = 0\\
-1 & x= \pm 2 \\
0 & \mathrm{otherwise}
\end{cases}
\end{align*}
In the absence of observation cost and parameter uncertainty, the controller aims to keep the process at the origin as often as possible, whilst avoiding the penalising boundary at $x \pm 2$. \Cref{bayesian_value_table} lists the performance of the optimal policies under each combination of observation cost and prior estimate. As the true value of $\theta = 0.3$, a prior of $\rho_0 \sim \mathrm{Beta}(2,5)$ acts a good estimate, and $\rho_0 \sim \mathrm{Beta}(5,2)$ acts as a poor estimate. In general, we see that the value of the observation cost $c_{\mathrm{obs}}$ has a more dominating effect on the resulting optimal policies and rewards obtained, as seen in the big drop-off in the number of observations when $c_{\mathrm{obs}} = 0.75$ in row (a), at which each observation comes at the cost of a significant proportion of the potential reward. Its effect on the sub-optimality is compounded with a bad prior estimate, with a negative reward and a 95\% credible interval width of 0.5 in the extreme case in the bottom-right entry of \Cref{bayesian_value_table}. 

\begin{table}[ht!]
    \centering
    \small
   \begin{tabular}{|c|c||c|c|c|c|c|}
   \hline
        & & $c_{\mathrm{obs}}= 0.1$ & $c_{\mathrm{obs}} = 0.25$ & $c_{\mathrm{obs}} = 0.5$ & $c_{\mathrm{obs}}=0.75$\\
        \hline
        & (a)  & 22.48 & 22.2 & 21.2 & 17.55 \\
        $\rho_0 \sim \mathrm{Beta}(2,5)$ & (b) & 20.622 & 17.15 & 11.26 & 6.0375 \\
        & (c)  & 0.2341 & 0.2360 & 0.2455 & 0.2844 \\
        \hline
        & (a)  & 21.4 & 20.97 & 18.36 & 11.27 \\
        $\rho_0 \sim \mathrm{Beta}(3,3)$ & (b) & 17.99 & 14.6475 & 8.92 & 2.5775 \\
        & (c)  & 0.2437 & 0.2459 & 0.2696 & 0.3624 \\
        \hline
        & (a)  & 19.22 & 17.3 & 11.21 & 3.34 \\
        $\rho_0 \sim \mathrm{Beta}(5,2)$ & (b) & 10.628 & 7.55 & 1.825 & -0.835 \\
        & (c)  & 0.2488 & 0.2583 & 0.3302 & 0.5034 \\
        \hline
   \end{tabular}
   \caption{Numerical results for the parameter uncertainty problem. Line (a): average number of observations. Line (b): average profit ($N=50$). Line (c): average credible interval width (HDI 95\%).}
    \label{bayesian_value_table}
\end{table}

\FloatBarrier

\section*{Acknowledgments.}
The authors would like to thank Prof. Dr. Dirk Becherer (Humboldt University of Berlin) for his insightful suggestions during discussion, as well as the two annonymous referrees for their feedback. Jonathan Tam is supported by the EPSRC Centre for Doctoral Training in Mathematics of Random Systems: Analysis, Modelling and Simulation (EP/S023925/1).

\bibliographystyle{abbrvurl}
\bibliography{ref}

\end{document}